\newcommand{\arxiv}[1]{\href{http://arxiv.org/abs/#1}{\tt arXiv:\nolinkurl{#1}}}
\newcommand{\googlebooks}[1]{(preview at \href{http://books.google.com/books?id=#1}{google books})}
\theoremstyle{plain}
\newtheorem{prop}{Proposition}[section]
\newtheorem{conj}[prop]{Conjecture}
\newtheorem{thm}[prop]{Theorem}
\newtheorem{lem}[prop]{Lemma}
\newtheorem{cor}[prop]{Corollary}
\newtheorem*{cor*}{Corollary}
\newtheorem*{claim*}{Claim}
\numberwithin{equation}{section}
\theoremstyle{remark}
\newtheorem{ex}[prop]{Example}
\newtheorem{remark}[prop]{Remark}           
\newtheorem*{rem*}{Remark}               
\newtheorem*{ex*}{Example}                
\theoremstyle{definition}
\newtheorem{defn}[prop]{Definition}         
\newtheorem{nota}[prop]{Notation}   
\newtheorem*{defn*}{Definition}             
\theoremstyle{plain}
\newcounter{comment}
\newcommand{\noop}[1]{}
\def\clap#1{\hbox to 0pt{\hss#1\hss}}
\def\semicolon{;}
\def\applytolist#1{
    \expandafter\def\csname multi#1\endcsname##1{
        \def\multiack{##1}\ifx\multiack\semicolon
            \def\next{\relax}
        \else
            \csname #1\endcsname{##1}
            \def\next{\csname multi#1\endcsname}
        \fi
        \next}
    \csname multi#1\endcsname}
\def\calc#1{\expandafter\def\csname c#1\endcsname{{\mathcal #1}}}
\def\bbc#1{\expandafter\def\csname bb#1\endcsname{{\mathbb #1}}}
\def\bfc#1{\expandafter\def\csname bf#1\endcsname{{\mathbf #1}}}
\renewcommand{\imath}{\mathfrak{i}}
\renewcommand{\jmath}{\mathfrak{j}}
\newcommand{\set}[1]{\left\{#1\right\}}
\newcommand{\hashdef}[2]{\@namedef{#1}{#2}}
\newcommand{\hashlookup}[1]{\@nameuse{#1}}
\newcommand{\pathtographs}{../../graphs/}}%
\newcommand{\pathtographs}{diagrams/graphs/}}
\newcommand{\bigraph}[1]{{\hspace{-3pt}\begin{array}{c}%
  \raisebox{-2.5pt}{\includegraphics[height=6mm]{\pathtographs \hashlookup{#1}}}%
\end{array}\hspace{-3pt}}}
\tikzstyle{shaded}=[fill=red!10!blue!20!gray!30!white]
\tikzstyle{unshaded}=[fill=white]
\tikzstyle{empty box}=[circle, draw, thick, fill=white, opaque, inner sep=2mm]
\tikzstyle{annular}=[scale=.7, inner sep=1mm, baseline]
\tikzstyle{rectangular}=[scale=.75, inner sep=1mm, baseline=-.1cm]
\definecolor{dark-red}{rgb}{0.7,0.25,0.25}
\definecolor{dark-blue}{rgb}{0.15,0.15,0.55}
\definecolor{medium-blue}{rgb}{0,0,0.65}
\DeclareMathOperator{\spann}{span}
\DeclareMathOperator{\Tr}{Tr}
\DeclareMathOperator{\trains}{trains}
\newcommand{\jw}[1]{f^{(#1)}}
\renewcommand{\set}[2]{\left\{#1\middle|#2\right\}}
\newcommand{\nbox}[6]{
	\draw[thick, #1] ($#2+(-#3,-#3)+(-#4,0)$) rectangle ($#2+(#3,#3)+(#5,0)$);
	\coordinate (ZZa) at ($#2+(-#4,0)$);
	\coordinate (ZZb) at ($#2+(#5,0)$);
	\node at ($1/2*(ZZa)+1/2*(ZZb)$) {#6};
}
\newcommand{\ncircle}[5]{
	\draw[thick, #1] #2 circle (#3);
	\node at #2 {#5};
	\node at ($#2+(#4:.1cm)+(#4:#3cm)$) {\scriptsize{$\star$}};
}
\newcommand{\gA}{{\textgoth{A}}}
\title{2-supertransitive subfactors at index $3+\sqrt{5}$}
\author[1]{Scott Morrison}
\affil[1]{Mathematical Sciences Institute, Australian National University}
\author[2]{David Penneys}
\affil[2]{University of Toronto}
\begin{document}
\maketitle

\begin{abstract}
This article proves the existence and uniqueness of a subfactor planar algebra with principal graph consisting of a diamond with arms of length 2 at opposite sides, which we call ``2D2''.
We also prove the uniqueness of the subfactor planar algebra with principal graph 4442.
We conjecture this will complete the list of subfactor planar algebras at index $3+\sqrt{5}$.
\end{abstract}

\section{Introduction}

The small index subfactor classification program has seen considerable success, with the most recent progress to index 5 \cite{MR2914056,MR2902285,MR2993924,MR2902286}.
The survey \cite{MR3166042} provides a comprehensive overview of the series, along with a general introduction to the methods.

This classification, as with previous classifications to index 4 \cite{MR0696688,MR996454,MR1193933,MR999799,MR1278111,MR1213139} and $3+\sqrt{3}$ \cite{MR1317352,MR1686551,MR1625762,MR2472028,MR2979509}, has three main steps:
\begin{enumerate}[(1)]
\item
Enumerate families of potential graph pairs, organized as `vines', `weeds', and `cylinders'.
\item
Apply obstructions to reduce these families to finitely many graph pairs.
\item
Determine how many examples there are (if any!) for surviving graph pairs.
\end{enumerate}
The enumeration step uses special obstructions which are able to simultaneously rule out all translated extensions of a given graph (see \cite{MR2914056} for the notions of translation and extension).
Progress to higher indices has only been possible because of the discovery of new obstructions of this type.
For example, the enumeration step for the classification to index $5$ \cite{MR2914056} used Ocneanu's triple point obstruction and associativity constraint \cite{MR1317352}. 

Ongoing joint work of Afzaly-Morrison-Penneys has made substantial progress toward extending the classification to index $3+\sqrt{5}$.
There have been several breakthroughs that have made this further enumeration possible, but we will not discuss them here.
The purpose of the present article is to complete the third step described above for the remaining promising candidates from this enumeration step.
That is, we construct one example and prove some uniqueness and non-existence results.
Such examples are rare and difficult to construct; moreover, they are of great importance to not only subfactor theory, but also fusion categories and conformal field theory.

In particular, we prove the following main theorems.

\begin{thm}\label{thm:2D2}
There is a unique  subfactor planar algebra with principal graphs ``2D2''
$$
\left(
\bigraph{bwd1v1v1p1v1x1v1v1duals1v1v1v1},
\bigraph{bwd1v1v1p1v1x0p1x0p0x1p0x1v0x1x1x0duals1v1v1x3x2x4}
\right).
$$
\end{thm}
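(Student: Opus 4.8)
The plan is the standard two-pronged attack: a skein-theoretic analysis yielding uniqueness together with an a priori bound on the sizes of the box spaces, and a graph planar algebra embedding producing an actual example and forcing that bound to be an equality. First I would read off $\dim P_{n,\pm}$ from the two principal graphs by counting loops of length $2n$ based at the marked vertex. Since 2D2 is $2$-supertransitive these agree with the Temperley--Lieb dimensions for $n\le 2$, while $\dim P_{3,+}$ exceeds the Temperley--Lieb value $C_3=5$ by exactly one. I would then fix a generator $S\in P_{3,+}$ spanning the orthogonal complement of $TL_{3,+}$, normalised, self-adjoint, and uncappable (killed by every single cap); its rotational eigenvalue $\omega$ is a root of unity pinned down by the combinatorics. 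Any subfactor planar algebra with principal graphs 2D2 is generated over Temperley--Lieb by such an $S$, determined up to the gauge $S\mapsto\pm S$.

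Next I would compute the full skein theory of $S$: the value of $\omega$ and of $\langle S,S\rangle$; the vanishing of every single cap $\cap_i S$; and a complete set of quadratic relations rewriting each way of combining two copies of $S$ (their products and the partial traces thereof) as an explicit $\Integer[\delta]$-linear combination of Temperley--Lieb diagrams and diagrams containing a single $S$. Each relation is extracted by expanding the unknown element in a basis of the relevant box space, whose dimension is already known, and solving the resulting finite linear system; sphericality, the value $\delta^2=3+\sqrt5$, and consistency of the partition function fix all coefficients. A jellyfish-style argument then shows these relations evaluate every closed diagram: one repeatedly pushes each $S$ toward the outer boundary using the capping and quadratic relations until two copies become adjacent and can be merged or annihilated. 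Hence the universal planar algebra on this presentation is evaluable with all box-space dimensions bounded above by the counts above; since any subfactor planar algebra with principal graphs 2D2 is a quotient of it with box spaces of the full size, such a subfactor planar algebra is unique if it exists.

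For existence I would realise the candidate inside the graph planar algebra $\mathcal G$ of 2D2, which is a spherical $C^*$ planar algebra: solve for a self-adjoint $S\in\mathcal G_{3,+}$ with rotational eigenvalue $\omega$ satisfying the relations above. This is a finite system of polynomial equations in the loop-indexed coefficients, tractable by hand using the bipartite structure of 2D2 or with a short computer calculation. The planar $*$-subalgebra of $\mathcal G$ generated by $S$ is then evaluable with exactly the box-space dimensions found in the first step, hence is a subfactor planar algebra with principal graphs 2D2; with the uniqueness bound, it is the only one. (Alternatively one might try to recognise 2D2 as a planar subalgebra of some already-known planar algebra at index $3+\sqrt5$, but I would rely on the graph planar algebra route.)

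The crux is the completeness of the relations: one must prove the jellyfish reduction terminates and genuinely caps the box-space dimensions, since a single overlooked relation would make the universal planar algebra too large and destroy uniqueness. A secondary point demanding care is checking that the $S$ produced inside $\mathcal G$ actually generates the full 2D2 graph --- rather than a planar subalgebra with a smaller principal graph --- and lies in the positive part of $\mathcal G$, so that the planar algebra obtained is genuinely unitary.
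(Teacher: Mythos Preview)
Your broad strategy---find a self-adjoint uncappable rotational eigenvector $S$ at depth $3$, derive skein relations, run jellyfish, and realise everything inside the graph planar algebra---is indeed the paper's strategy. But there is a genuine gap at the heart of your reduction step.

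First, the quadratic relations you want do not exist. The principal graph $2D2$ has annular multiplicity sequence $*12$: there is one new low-weight vector at depth $3$ (your $S$) and \emph{two} new low-weight vectors at depth $4$. Concretely, $\dim \cP_{4,+}=24$ while $\cT\cL_{4,+}\oplus\gA_{4,+}(S)$ is only $14+8=22$-dimensional, so a product such as $S\circ S$ cannot be rewritten as a linear combination of Temperley--Lieb diagrams and diagrams containing a single $S$. Your proposed system of relations is therefore two dimensions short and the jellyfish reduction cannot terminate as described.

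The underlying obstruction is that $2D2$ is \emph{not stable} at depth $3$: the two depth-$3$ vertices merge to a single vertex at depth $4$. By the Popa stability/jellyfish correspondence used in the paper, this is exactly the statement that trains on $\cP_{3,+}$ (equivalently, trains on $\{S\}$) fail to span $\cP_{4,+}$, so there is no hope of $2$-strand jellyfish relations for $S$ alone. The paper's workaround is to pass to depth $4$, where the graph \emph{is} stable: one builds from $S$ a system of matrix units $\cS_{4,+}=\{e_{4,+}^{i,j}\}$ for the $M_2(\bbC)$ summand at the depth-$4$ vertex and proves $2$-strand jellyfish relations for those. Because these generators are neither uncappable nor rotational eigenvectors, the paper first develops a generalised ``universal jellyfish algorithm'' (its Theorem~\ref{thm:EvaluableSubalgebra}) handling generators at several depths with prescribed capping and absorption behaviour; this is the technical core you are missing.

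A second, smaller divergence: your uniqueness argument tries to pin down the relations abstractly from the principal-graph data and the partition function. The paper does not attempt this. Instead it uses the embedding theorem for uniqueness as well as existence: any $2D2$ subfactor planar algebra embeds in the graph planar algebra of $2D2$, so its generator must be one of the explicit solutions $\pm T(\lambda)$ found there (a genuine one-parameter family, not a unique element). One then verifies, by computation in the graph planar algebra, that every jellyfish relation and structure constant for $\cS_{4,+}$ is independent of $\lambda$, so all $T(\lambda)$ generate the same subfactor planar algebra. This simultaneously proves existence, uniqueness, and the nonexistence of the competing dual graphs in Theorem~\ref{thm:Unique}.
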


\begin{thm}\label{thm:Unique}
There are no subfactor planar algebras with principal graphs
$$
\left(
\bigraph{bwd1v1v1p1v1x1v1v1duals1v1v1v1},
\bigraph{bwd1v1v1p1v1x0p1x0p0x1p0x1v1x0x0x1duals1v1v1x3x2x4}
\right)
\text{ or }
\left(
\bigraph{bwd1v1v1p1v1x1v1v1duals1v1v1v1},
\bigraph{bwd1v1v1p1v1x1v1v1duals1v1v1v1}
\right).
$$
\end{thm}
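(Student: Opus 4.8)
The plan is to assume for contradiction that $\cP$ is a subfactor planar algebra whose principal graph is 2D2 and whose dual principal graph is one of the two graphs displayed, and to push the low-depth analysis until it collapses. First I would use $2$-supertransitivity: the box spaces $\cP_{n,\pm}$ coincide with Temperley--Lieb for $n\leq 2$, and a loop count in 2D2 gives $\dim\cP_{3,+}=\dim\TL_{3,+}+1$, so $\cP$ is generated by a single uncappable lowest-weight rotational eigenvector $S\in\cP_{3,+}$, with $\rho(S)=\omega S$ for a root of unity $\omega$. The principal graph together with the chosen dual principal graph fix $\dim\cP_{n,\pm}$ for all small $n$ by loop counting and fix the contragredient data at the branch vertices of the diamond; fed into the quadratic-tangles relations for $2$-supertransitive planar algebras, this constrains $\omega$ and the scalars governing the products $S\cdot S$ and $S\cdot\rho^{k}(S)$.

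Next I would write out the full skein data --- $S\cdot S$ and $S\cdot\rho^{k}(S)$ as linear combinations of Temperley--Lieb diagrams and single-$S$ diagrams with undetermined coefficients, in the jellyfish/evaluation-algorithm format --- and impose the planar-algebra axioms. This produces a finite system of polynomial equations in the coefficients, one for each consistency move in the presentation, together with the positivity requirement that the trace form induced on each $\cP_{n,\pm}$ be positive semidefinite of exactly the rank dictated by the graphs. For the dual principal graph of Theorem~\ref{thm:2D2} this system has a unique solution; the content of Theorem~\ref{thm:Unique} is that for the two dual graphs in its statement --- the perturbation of the first pair, and the symmetric pair whose dual principal graph is again 2D2 --- it has none.

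Establishing that non-existence is the main obstacle and the bulk of the work. I expect the contradiction to surface in one of the two standard ways: either the consistency moves force a value of $\omega$, of the loop parameter $\delta$ with $\delta^{2}=3+\sqrt5$, or of some structure constant that cannot lie in the cyclotomic-by-real field it must belong to --- the efficient way to detect this being a Galois argument using the nontrivial automorphism of $\Rational(\sqrt5)$, under which $3+\sqrt5\mapsto 3-\sqrt5$; or else the moves are solvable but every solution makes a Gram determinant in some $\cP_{n,\pm}$ strictly negative, contradicting unitarity. For the symmetric pair I would additionally look for a softer route, since requiring both branch vertices of the diamond to be self-contragredient tends to conflict directly with the allowed rotational eigenvalue. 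Finally, as an independent verification I would run the paragroup argument in parallel: a subfactor planar algebra with either of these principal graph pairs would supply a biunitary connection on the corresponding bipartite graph pair, and for graphs this small it is a tractable finite computation to check that no such connection exists.
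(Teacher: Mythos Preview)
Your proposal takes a genuinely different route from the paper, and one of its fallback checks is in fact wrong.

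The paper does \emph{not} attack the two bad graph pairs individually. Instead it proves Theorems~\ref{thm:2D2} and~\ref{thm:Unique} simultaneously via the graph planar algebra embedding. Any subfactor planar algebra with principal graph $2D2$ embeds in the graph planar algebra $\cG_\bullet$ of $2D2$, and there the equation $T^2=\jw{3}$ has (up to sign) a single one-parameter family of uncappable rotational eigenvector solutions $T(\lambda)$. The paper then shows, by deriving $2$-strand jellyfish relations for an explicit system of matrix units $\cS_{4,+}$ built from $T$, that the planar subalgebra generated by $T(\lambda)$ is evaluable and \emph{independent of} $\lambda$. Finally it computes the dual principal graph of this one planar algebra directly and finds it is the graph in Theorem~\ref{thm:2D2}. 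The two graphs in Theorem~\ref{thm:Unique} are therefore excluded not by an internal contradiction, but because any putative example would embed in $\cG_\bullet$, be generated by some $T(\lambda)$, and hence have the wrong dual graph. Your direct-obstruction plan (skein equations with no solution, or a negative Gram determinant) may well be workable, but you would have to actually carry it out; the paper's route avoids that case analysis entirely and yields existence and uniqueness in one stroke.

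Your biunitary-connection backup fails outright: the paper notes explicitly that \emph{all three} candidate graph pairs admit a biunitary connection. So the paragroup argument cannot distinguish them; the obstruction lies at the level of flatness, which is exactly what the planar-algebra computation is detecting.
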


We also prove the following, which was also known to Izumi.

\begin{thm}\label{thm:4442}
There is exactly one subfactor planar algebra with principal graphs 
$$
4442=
\left(\bigraph{bwd1v1v1v1v1p1p1v1x0x0p0x1x0p0x0x1v1x0x0p0x1x0v1x0p0x1duals1v1v1v2x1x3v2x1},\bigraph{bwd1v1v1v1v1p1p1v1x0x0p0x1x0p0x0x1v1x0x0p0x1x0v1x0p0x1duals1v1v1v2x1x3v2x1}\right)
$$
(which was constructed in \cite{1208.3637}).
\end{thm}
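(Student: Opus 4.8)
The plan is to deduce uniqueness by showing that the principal graph pins the planar algebra down completely; existence is the construction of \cite{1208.3637}, so only uniqueness requires argument, and I would run it in parallel with the proof of Theorem~\ref{thm:2D2}.

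Let $\cP$ be a subfactor planar algebra with principal graphs $4442$. Since $4442$ is $2$-supertransitive, $\cP$ agrees with Temperley--Lieb through the $2$-box spaces, and the first new structure is a space of lowest-weight vectors for the annular Temperley--Lieb action inside $P_{3}$; fix a self-adjoint lowest-weight generator $S \in P_{3}$ for it. The graph already determines most of the data: the modulus is $\delta=\sqrt{3+\sqrt 5}$, each $\dim P_{n,\pm}$ is a loop count on the graph, the rotational eigenvalue of $S$ is constrained by how the graph branches, and, crucially, the inner product on every (small) box space is computed from the Frobenius--Perron dimensions attached to the vertices. An annular-multiplicity count against the depth-$4$ part of the graph then shows that $S$ and its rotations generate $\cP$, so $\cP$ is a quotient of the universal planar algebra on $S$.

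Next I would pin down the relations. By the jellyfish algorithm, any diagram in which two copies of $S$ are joined along some strands can be rewritten as a linear combination of Temperley--Lieb diagrams and of single-$S$ annular diagrams; these jellyfish relations are the defining relations of $\cP$. Their coefficients are not free: pairing each relation with a basis of the relevant box space converts the unknown coefficients into the solution of a linear system whose matrix is the Gram matrix of that box space, which is known from the graph and is non-degenerate on the subspace where one needs to invert. Hence the jellyfish relations, and with them $\cP$ together with its inner product, are determined by the graph. Since \cite{1208.3637} exhibits a subfactor planar algebra with principal graphs $4442$, the object so determined is non-zero and consistent, hence is the unique such planar algebra, up to the gauge freedom of rescaling $S$ and complex-conjugating the structure constants, which isomorphism absorbs.

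The delicate stage is the middle one: correctly enumerating the jellyfish relations and verifying that the relevant Gram matrices are non-degenerate where inversion is required --- the branchings of $4442$ together with its self-duality produce several joined-$S$ configurations and rotational sectors to track --- and confirming that $S$ genuinely generates, that is, that the graph supports no further lowest-weight vector at depth $4$. In practice I would carry out these computations inside the graph planar algebra of $4442$: by the embedding theorem $\cP$ embeds there, so the admissible generators $S$ --- self-adjoint, with the correct rotational eigenvalue, generating a planar subalgebra of the prescribed dimensions --- can be found explicitly, and uniqueness becomes the statement that this search returns a single orbit under the graph's automorphisms, which one then matches with the example of \cite{1208.3637}.
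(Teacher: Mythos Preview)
There is an immediate factual slip: the principal graph $4442$ is $4$-supertransitive, not $2$-supertransitive (the branch occurs at depth $5$, with three vertices), so the first new lowest-weight vectors live in $\cP_{5,+}$, not in $\cP_{3,+}$; in fact the paper exhibits \emph{two} such generators at depth $5$. This already breaks your ``$S$ and its rotations generate $\cP$'' step as stated, and the annular-multiplicity bookkeeping has to be redone at the correct depth.

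The more serious gap is the middle paragraph. You assert that the jellyfish coefficients are fixed by pairing each relation against a basis and inverting the Gram matrix, and that this Gram matrix is ``known from the graph''. The Gram matrix of trains is \emph{not} determined by the principal graph alone: pairings of two-generator trains produce closed diagrams with three or four copies of the generator, and those values are exactly the undetermined structure constants you are trying to solve for. Your argument, read literally, would prove that any principal graph supports at most one subfactor planar algebra, which is false. The $2D2$ proof you are trying to mimic does not work this way either: there one actually classifies all candidate generators inside the graph planar algebra (a one-parameter family) and then checks, by explicit computation, that the resulting relations are independent of the parameter. Your final paragraph gestures at this, but it is the whole proof, not a fallback.

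For comparison, the paper's argument for $4442$ takes a completely different route: it writes down the diagrammatic branch matrix with its nine unknown phases, solves the unitarity constraints to find exactly two solutions that are complex conjugates of one another, and observes that they are exchanged by a graph automorphism. Hence there is a single gauge orbit of bi-unitary connections up to graph automorphism, so at most one subfactor planar algebra; existence is \cite{1208.3637}. This bypasses jellyfish entirely and is much shorter for this particular graph.
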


Recall the conjecture of Morrison-Peters \cite{1205.2742}:

\begin{conj}[{\cite[Conjecture 2.2]{1205.2742}}]\label{conj:ToIndex3PlusSqrt5}
There are exactly 2 non-Temperley-Lieb subfactor planar algebras with index in $(5,3+\sqrt{5})$:
\begin{itemize}
\item
the unique $\mathfrak{su}(2)_5$ subfactor planar algebra \cite{MR936086,1205.2742}, and
\item
the unique $\mathfrak{su}(3)_4$ subfactor planar algebra \cite{MR936086,1205.2742}.
\end{itemize}
\end{conj}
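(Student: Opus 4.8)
The plan is to prove Conjecture~\ref{conj:ToIndex3PlusSqrt5} by running the three-step classification program recalled in the introduction, now carried out up to index $3+\sqrt{5}$ rather than $5$. Since the $A_n$ and $A_\infty$ (Temperley--Lieb) planar algebras account for all subfactors with trivial principal graph data, the content is entirely in the non-Temperley--Lieb case, and I would phrase the target as: \emph{every} non-Temperley--Lieb subfactor planar algebra with index in $(5,3+\sqrt{5})$ has one of exactly two principal graph pairs, each realised by a unique planar algebra. Note that the open interval deliberately excludes index $5$ (already classified) and index $3+\sqrt{5}$ itself, so the two promised examples must have index \emph{strictly} between the two endpoints.

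First I would invoke the enumeration step. The ongoing Afzaly--Morrison--Penneys work referenced above runs the vines/weeds/cylinders enumeration for index below $3+\sqrt{5}$ and, after applying the usual battery of obstructions (Ocneanu's triple point obstruction and associativity constraint, connectedness and the other \texttt{FusionAtlas}-style consistency checks, together with the newer graph-norm obstructions needed to terminate whole infinite families of weeds at once), produces a finite explicit list of candidate principal graph pairs; I would take that list as input. Next comes the identification step: among these candidates one singles out the graph pairs of the two known examples -- the unique $\mathfrak{su}(2)_5$ and the unique $\mathfrak{su}(3)_4$ subfactor planar algebras, whose existence and uniqueness are known from \cite{MR936086,1205.2742} -- and checks that their indices indeed lie strictly between $5$ and $3+\sqrt{5}$, so that they, and only they, are the claimed survivors.

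The third step is to go down the remaining candidate list and, for each entry, either exhibit a subfactor planar algebra with those principal graphs and prove it unique, or prove that no such planar algebra exists. This is exactly the analysis carried out in the present paper: Theorems~\ref{thm:2D2} and~\ref{thm:4442} settle the ``2D2'' and ``4442'' candidates (both of which occur at index exactly $3+\sqrt{5}$, and so are excluded from the open interval), while Theorem~\ref{thm:Unique} eliminates two further graph pairs outright. For a candidate genuinely interior to $(5,3+\sqrt{5})$ one would run the same toolkit: realise any hypothetical planar algebra inside the graph planar algebra of its principal graph, cut it down by the biunitarity/rotational-eigenvalue constraints and the jellyfish relations, and then either derive a numerical or skein-theoretic contradiction or pin down a small generating set precisely enough to prove both existence and uniqueness, as in the quadratic-tangles and jellyfish arguments used for the index-$5$ classification \cite{MR2914056,MR3166042}.

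The main obstacle is twofold, and is precisely why the statement remains a conjecture. First, the enumeration step must be made provably complete and finite: controlling the weeds requires obstructions strong enough to kill entire infinite families simultaneously, and producing such obstructions valid all the way up to $3+\sqrt{5}$ is the deepest ingredient. Second, there is no uniform procedure for the case-by-case existence/non-existence verdicts on the surviving candidates; each can demand a tailored evaluation algorithm or a delicate positivity argument, and even the number of such candidates is not cheaply bounded a priori. The present paper removes the hardest of the obstructions at the endpoint index $3+\sqrt{5}$ -- in particular the ``diamond''-shaped 2D2 graph, whose nontrivial annular structure makes both the existence construction and the uniqueness proof subtle -- but a full proof of the conjecture awaits both the completion of the enumeration below $3+\sqrt{5}$ and a verdict on every remaining interior candidate.
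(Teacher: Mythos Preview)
Your proposal is not really a proof, and appropriately so: the paper does not prove this statement either. It is explicitly labelled as a conjecture, quoted from \cite{1205.2742}, and the paper's own commentary makes clear that the enumeration step (the Afzaly--Morrison--Penneys work) is ongoing and that ``to prove the above conjectures, it remains to show that there are no other subfactor planar algebras with index at most $3+\sqrt{5}$.'' So there is no proof in the paper to compare against.

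Your outline of what a proof \emph{would} require is accurate and matches the paper's framing: run the vines/weeds/cylinders enumeration, apply obstructions to reduce to finitely many candidates, then handle each survivor individually. You also correctly observe that the results of the present paper (Theorems~\ref{thm:2D2}, \ref{thm:Unique}, \ref{thm:4442}) all concern index exactly $3+\sqrt{5}$ and hence do not bear directly on the open interval $(5,3+\sqrt{5})$; they are aimed at the companion conjecture about the endpoint. The only quibble is presentational: you open by saying ``the plan is to prove Conjecture~\ref{conj:ToIndex3PlusSqrt5},'' which reads as though a proof follows, but you then (correctly) concede in the final paragraph that the enumeration is not complete and the conjecture remains open. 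It would be cleaner to frame the whole thing from the outset as a proof \emph{strategy} rather than a proof.
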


\begin{remark}
Conjecture \ref{conj:ToIndex3PlusSqrt5} is known to hold when restricted to exactly 1-supertransitive subfactor planar algebras \cite[Theorem 2.1]{1205.2742}.
In fact, the recent article \cite{1310.8566}, using techniques of \cite{1308.5656}, has classified all exactly 1-supertransitive subfactor planar algebras without intermediates with index at most $6\frac{1}{5}$.
Moreover, \cite{1308.5691} (see also \cite{1308.5723}) has classified exactly 1-supertransitive subfactor planar algebras at index $3+\sqrt{5}$.
\end{remark}

We further conjecture that we now have a complete list of the possible subfactor planar algebras with index in $(5,3+\sqrt{5}]$.

\begin{conj}
At index $3+\sqrt{5}$, there are exactly 13 non-Temperley-Lieb subfactor planar algebras:
\begin{itemize}
\item
the Bisch-Jones Fuss-Catalan $A_3*A_4$ subfactor planar algebra and its dual \cite{MR1437496},
\item
the 5 quotients of the Fuss-Catalan $A_3*A_4$ subfactor planar algebra \cite{1308.5691,1308.5723} (including the self-dual tensor product),
\item
the unique 2D2 subfactor planar algebra and its dual,
\item
Izumi's unique symmetrically self-dual $3^{\bbZ/2\bbZ\times\bbZ/2\bbZ}$ subfactor planar algebra \cite{IzumiUnpublished,1208.3637},
\item
Izumi's unique $3^{\bbZ/4\bbZ}$ subfactor planar algebra and its dual \cite{IzumiUnpublished,1308.5197}, and
\item
the unique symmetrically self-dual 4442 subfactor planar algebra \cite{1208.3637}.
\end{itemize}
\end{conj}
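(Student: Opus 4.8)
The plan is to carry out, at index exactly $3+\sqrt5$, the three-step classification scheme described in the introduction, and then to check that the surviving candidates are precisely the thirteen planar algebras listed.

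\emph{Step 1 (enumeration).} Invoke the ongoing Afzaly--Morrison--Penneys enumeration referred to above, which organizes the potential principal graph pairs at index $3+\sqrt5$ into finitely many vines, weeds, and cylinders. Supertransitivity splits the problem: the exactly $1$-supertransitive case at index $3+\sqrt5$ is already settled (the remark following Conjecture \ref{conj:ToIndex3PlusSqrt5}, via \cite{1308.5691,1308.5723}), contributing the Bisch--Jones Fuss--Catalan $A_3 * A_4$ planar algebra, its dual, and its five quotients (one of which is the self-dual tensor product). One may therefore restrict attention to supertransitivity at least $2$.

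\emph{Step 2 (obstructions).} Apply the standard package of obstructions to the remaining weeds and cylinders: Ocneanu's triple point obstruction and associativity constraint \cite{MR1317352}, connectedness and the dimension/number-theoretic constraints forcing the index to lie in the relevant interval, and the newer translation-killing obstructions developed for the higher-index enumeration. The output should be exactly the following promising candidates: the two ``2D2''-type graph pairs appearing in Theorems \ref{thm:2D2} and \ref{thm:Unique}, the ``4442'' pair of Theorem \ref{thm:4442} together with the remaining candidate pair in Theorem \ref{thm:Unique}, and the principal graphs of Izumi's $3^{\bbZ/2\bbZ \times \bbZ/2\bbZ}$ and $3^{\bbZ/4\bbZ}$ subfactor planar algebras.

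\emph{Step 3 (existence and uniqueness).} Count the planar algebras on each surviving pair. For ``2D2'': Theorem \ref{thm:2D2} gives a unique example, and Theorem \ref{thm:Unique} rules out the companion pair, contributing the 2D2 planar algebra and its dual. For ``4442'': Theorem \ref{thm:4442} gives exactly one example (the one of \cite{1208.3637}), and Theorem \ref{thm:Unique} rules out the companion pair, contributing the symmetrically self-dual 4442 planar algebra. For the $3^G$ graphs, invoke Izumi's existence results and the corresponding uniqueness statements \cite{IzumiUnpublished,1208.3637,1308.5197}, contributing the symmetrically self-dual $3^{\bbZ/2\bbZ \times \bbZ/2\bbZ}$ planar algebra and the $3^{\bbZ/4\bbZ}$ planar algebra with its dual. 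Summing, $2 + 5 + 2 + 1 + 2 + 1 = 13$. Finally one checks that each of these genuinely has index $3+\sqrt5$ and that none is Temperley--Lieb, which is immediate from the principal graphs.

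The main obstacle is Step 1: the enumeration at index $3+\sqrt5$ is precisely the ``ongoing joint work'' that the introduction declines to present, and without its completion the conjecture cannot be upgraded to a theorem. Concretely one must certify that the vine/weed/cylinder process terminates and that its output is complete, i.e.\ that no surviving branch produces a principal graph pair outside the list above. A secondary difficulty is bookkeeping: matching the combinatorially-described pairs emitted by the enumeration against the named planar algebras, and confirming in the borderline cases that the obstructions of Step 2 eliminate every translation and extension rather than merely the minimal graph. The genuinely new analytic input — Theorems \ref{thm:2D2}, \ref{thm:Unique}, and \ref{thm:4442} — is supplied in the present article; what remains is to certify that these cases together with the previously known ones exhaust the classification.
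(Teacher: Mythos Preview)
The statement is a \emph{conjecture}, not a theorem, and the paper does not attempt to prove it. Immediately after stating it, the paper says: ``Hence to prove the above conjectures, it remains to show that there are no other subfactor planar algebras with index at most $3+\sqrt{5}$. This is precisely the goal of the work of Afzaly-Morrison-Penneys.'' There is therefore no paper proof against which to compare your proposal.

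Your outline correctly identifies both the intended strategy and the gap: the enumeration of Step~1 is the unfinished ingredient, and you say so explicitly in your final paragraph. In that sense your proposal is an accurate status report rather than a proof, and it agrees with the paper's own assessment of what remains to be done.

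Two bookkeeping slips are worth flagging. First, there are \emph{three} 2D2-type graph pairs in play, not two: one in Theorem~\ref{thm:2D2} (which is realized) and two in Theorem~\ref{thm:Unique} (which are not). Second, Theorem~\ref{thm:Unique} concerns only graphs with 2D2 as principal graph; there is no ``companion pair'' to 4442 being ruled out there. The uniqueness statement for 4442 in Theorem~\ref{thm:4442} stands on its own, via the connection argument in Section~\ref{sec:4442}.
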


Izumi proved the uniqueness of the $3^{\bbZ/2\bbZ \times\bbZ/2\bbZ}$ and $3^{\bbZ/4\bbZ}$ subfactor planar algebras in unpublished notes by finding all Cuntz algebra models of such subfactors \cite{IzumiUnpublished}.
Hence to prove the above conjectures, it remains to show that there are no other subfactor planar algebras with index at most $3+\sqrt{5}$.
This is precisely the goal of the work of Afzaly-Morrison-Penneys.

We remark that these unpublished notes of Izumi \cite{IzumiUnpublished} also prove the existence of a subfactor with principal graph
$$
2D2=\bigraph{bwd1v1v1p1v1x1v1v1duals1v1v1v1}.
$$
The possible graph partners of $2D2$, subject to the associativity condition \cite{MR1317352,MR2914056}, are
$$
\bigraph{bwd1v1v1p1v1x1v1v1duals1v1v1v1},\,
\bigraph{bwd1v1v1p1v1x0p1x0p0x1p0x1v1x0x0x1duals1v1v1x3x2x4},\text{ and }
\bigraph{bwd1v1v1p1v1x0p1x0p0x1p0x1v0x1x1x0duals1v1v1x3x2x4},
$$
and each of these graph pairs has a bi-unitary connection.
As showing flatness of a connection \cite{MR996454,MR1642584} can be extremely demanding, we use planar algebra techniques to prove Theorems \ref{thm:2D2} and \ref{thm:Unique} simultaneously.

To construct the 2D2 subfactor planar algebra, we follow Jones' program \cite{MR1929335} of finding it inside a graph planar algebra \cite{MR1865703}.
In the graph planar algebra of 2D2, we find a one-parameter family (up to sign) of uncappable rotational eigenvectors $T=T(\lambda)$ for $\lambda\in\bbT$ which satisfy $T^2=\jw{3}$. 
We then derive relations for this generator, and we prove that independent of $\lambda\in \bbT$, $T$ generates the 2D2 subfactor planar algebra.
Not only does this prove existence and uniqueness for 2D2 in Theorem \ref{thm:2D2}, it also proves nonexistence of the graph pairs in Theorem \ref{thm:Unique} (see Section \ref{sec:PrincipalGraphs} for more details).

By \cite{MR3157990}, Bigelow's jellyfish algorithm \cite{MR2577673,MR2979509} is universal for finite depth subfactor planar algebras.
We make this statement precise in Theorem \ref{thm:EvaluableSubalgebra} and Remark \ref{rem:FiniteDepth}.
We apply this to 2D2 as follows.
We use $T$ to find a system of matrix units $\cS_{4,+}$ for a copy of $M_2(\bbC)$ corresponding to the vertex at depth 4 of 2D2.
Since 2D2 is stable in the sense of \cite{MR1334479} at depth 4, we can show that 2-strand jellyfish relations hold for the elements of $\cS_{4,+}$, in Theorem \ref{thm:2Strand}. These are sufficient to evaluate closed diagrams by Theorem \ref{thm:Evaluable}.
This shows that the planar subalgebra generated by $T$ in the graph planar algebra of 2D2 is a subfactor planar algebra.
Interestingly, the universal jellyfish algorithm described in Theorem \ref{thm:EvaluableSubalgebra} uses generators which are not necessarily uncappable rotational eigenvectors.

The even half of the 2D2 subfactor planar algebra belongs to the family of generalized Haagerup categories, which Grossman-Izumi-Snyder have recently used to connect the Asaeda-Haagerup subfactor \cite{MR1686551} to a $3^{\bbZ/4\bbZ\times \bbZ/2\bbZ}$ subfactor via Morita equivalence and equivariantization. 
(Some preliminary details can be found in \cite{1202.4396}.)
The 2D2 and 4442 subfactor planar algebras are also related to Izumi's $3^{\bbZ/4\bbZ}$ and $3^{\bbZ/2\bbZ\times \bbZ/2\bbZ}$ subfactors via equivariantization.
In particular, we have the following corollaries of our main theorems.
 
\begin{cor}\label{cor:Equi}
The unique 2D2 subfactor planar algebra from Theorem \ref{thm:2D2} has a planar automorphism by mapping the uncappable rotational eigenvector $T$ at depth $3$ to its negative.
The fixed point subfactor planar algebra under this automorphism is Izumi's $3^{\bbZ/4\bbZ}$ subfactor planar algebra constructed in \cite{IzumiUnpublished,1308.5197}.
\end{cor}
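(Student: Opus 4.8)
The plan is to derive both assertions from the uniqueness in Theorem~\ref{thm:2D2} together with Izumi's uniqueness for the $3^{\bbZ/4\bbZ}$ planar algebra. Realize the $2D2$ planar algebra $P$ as in the construction: the planar subalgebra of the $2D2$ graph planar algebra generated by the uncappable rotational eigenvector $T\in P_{3,+}$ with $T^2=\jw{3}$. The element $-T$ is again an uncappable rotational eigenvector with the same rotational eigenvalue and $(-T)^2=\jw{3}$, and $\langle -T\rangle=\langle T\rangle=P$. Hence by the uniqueness half of Theorem~\ref{thm:2D2}, applied to the two generators $T$ and $-T$ of $P$, there is a planar algebra isomorphism $\phi\colon P\to P$ with $\phi(T)=-T$; equivalently, one checks directly that the relations used to prove Theorem~\ref{thm:2D2} (in particular those underlying the $2$-strand jellyfish relations of Theorem~\ref{thm:2Strand}) are invariant under $T\mapsto -T$, which amounts to saying that $P$ carries a $\bbZ/2\bbZ$-grading in which $T$ is odd. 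Since $P=\langle T\rangle$ we have $\phi^2=\id$, and $\phi\neq\id$ because $T\neq -T$ in characteristic $0$. This is the planar automorphism of the first sentence.

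Now set $Q:=P^{\phi}=\{x\in P : \phi(x)=x\}$, which is a planar subalgebra of $P$ by naturality of $\phi$ under planar tangles. Since $\phi$ preserves the modulus and the $*$-structure (being an isomorphism of subfactor planar algebras), $Q$ inherits from $P$ its modulus $\delta=\sqrt{3+\sqrt5}$, sphericality, positivity of the Markov trace, and evaluability; and since $P$ is $2$-supertransitive, $P_{0,\pm}$ and $P_{1,\pm}$ are one-dimensional and $\phi$ acts trivially on them, so $Q_{0,\pm}$ and $Q_{1,\pm}$ are one-dimensional. Thus $Q$ is a subfactor planar algebra of index $3+\sqrt5$. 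To identify it, one computes its principal graph: $\phi$ acts as $-1$ on the span of $T$ inside $P_{3,+}$ and as $+1$ on Temperley--Lieb, and is then determined on every higher box space by multiplicativity and naturality, so from the explicit description of $P$ (its box spaces, the copy of $M_2(\bbC)$ with matrix units $\cS_{4,+}$ at depth $4$, and the derived relations) one reads off $\dim Q_{n,\pm}$ for every $n$ up to the depth of the graph. One finds that $Q$ is $3$-supertransitive with principal graph equal to that of $3^{\bbZ/4\bbZ}$; by Izumi's classification of Cuntz-algebra models \cite{IzumiUnpublished} (see also \cite{1308.5197}) there is a unique subfactor planar algebra with that principal graph, so $Q\cong 3^{\bbZ/4\bbZ}$. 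This is also consistent with the even part of $2D2$ being a generalized Haagerup category for $\bbZ/4\bbZ$ and $3^{\bbZ/4\bbZ}$ being its $\bbZ/2\bbZ$-dequivariantization.

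The routine parts are the existence, order, and nontriviality of $\phi$, and the verification that $Q$ satisfies the subfactor planar algebra axioms. The \emph{main obstacle} is the last step: controlling the action of $\phi$ on enough box spaces to pin down the \emph{entire} principal graph of $Q$ --- not merely a subgraph or a lower bound for its supertransitivity --- and confirming that $Q$ is connected, so that Izumi's uniqueness can be invoked to conclude $Q=3^{\bbZ/4\bbZ}$ rather than only that $Q$ sits inside something with that graph.
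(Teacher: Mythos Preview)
Your argument for the existence of the automorphism is essentially the paper's: the paper verifies directly that $T\mapsto -T$ preserves the explicit $2$-strand jellyfish relations (see Remark~\ref{rem:SameRelations}), which is the concrete version of your appeal to the uniqueness half of Theorem~\ref{thm:2D2}.

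The genuine difference is in the identification of $Q=P^{\phi}$. You propose to compute the principal graph of $Q$ by tracking $\phi$ on each box space and then invoke Izumi's \emph{uniqueness} of $3^{\bbZ/4\bbZ}$ from \cite{IzumiUnpublished}. The paper instead exhibits $3^{\bbZ/4\bbZ}$ \emph{inside} $Q$ directly: the depth-$4$ elements $A,B$ of Definition~\ref{defn:4boxes} are built from $T\circ T$, $T\star T$, $\jw{4}$, and $\cF(\check f^{(4)})$, each containing an even number of $T$'s, so $A,B$ are manifestly fixed by $\phi$; and Lemma~\ref{lem:3ZMod4} has already checked, by computing moments and tetrahedral constants in the graph planar algebra, that $A$ and $B$ generate a copy of $3^{\bbZ/4\bbZ}$. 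Thus $3^{\bbZ/4\bbZ}\subseteq Q$, and together with $Q$ being $3$-supertransitive (since $T$ is odd) one concludes $Q=3^{\bbZ/4\bbZ}$.

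This buys two things over your route. First, it bypasses precisely the obstacle you flag: there is no need to reconstruct the full principal graph of $Q$ level by level, because the target planar algebra is already sitting concretely inside the fixed points. Second, it avoids any dependence on the uniqueness of $3^{\bbZ/4\bbZ}$ from Izumi's unpublished notes; it uses only the skein-theoretic characterization from \cite{1308.5197} via Lemma~\ref{lem:3ZMod4}. The paper in fact remarks that while the corollary \emph{could} be deduced from \cite{IzumiUnpublished} together with uniqueness of $2D2$---which is exactly your strategy---its own proof does not rely on that. Your approach would work if the principal-graph computation were carried out, but the paper's is both shorter and logically more self-contained.
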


We prove the above corollary in Section \ref{sec:PrincipalGraphs}, but note that it also follows from \cite{IzumiUnpublished} once we know 2D2 is unique.
The following corollary is due to Izumi using his Cuntz algebra model \cite{IzumiUnpublished}; we briefly discuss it in Section \ref{sec:4442}. 

\begin{cor}\label{cor:EquiZ2Z2}
By \cite{MR3157990,1208.3637}, Izumi's $3^{\bbZ/2\bbZ\times \bbZ/2\bbZ}$ subfactor planar algebra is generated by the 3 minimal projections at depth 4.
There is a planar automorphism which cyclically permutes the minimal projections, and the fixed point planar algebra is the 4442 subfactor planar algebra.
\end{cor}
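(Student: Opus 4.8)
The plan is to obtain the automorphism from the hidden $S_3$-symmetry of Izumi's Cuntz-algebra realization of $P:=3^{\bbZ/2\bbZ\times\bbZ/2\bbZ}$, to compute the fixed-point planar algebra in that model, and to identify it using the uniqueness statement of Theorem~\ref{thm:4442}. Recall that in Izumi's construction the even half of $P$ is generated inside a Cuntz algebra by the group $\bbZ/2\bbZ\times\bbZ/2\bbZ$ of invertible objects together with one further object $\rho$ of dimension $2+\sqrt5$ satisfying $\rho^2\iso\id\directSum\bigoplus_{g}g\rho$; the three minimal projections at depth $4$ named in the statement are precisely the three non-identity elements $g_1,g_2,g_3$ of $\bbZ/2\bbZ\times\bbZ/2\bbZ$, which are the invertible $N$-$N$ bimodules first occurring at depth $4$ of the principal graph. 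The group $\operatorname{Aut}(\bbZ/2\bbZ\times\bbZ/2\bbZ)\iso GL_2(\Field_2)\iso S_3$ permutes $\{g_1,g_2,g_3\}$, and one checks, either from Izumi's explicit structure constants or from the uniqueness of $3^{\bbZ/2\bbZ\times\bbZ/2\bbZ}$, that this $S_3$-action lifts to an action on the subfactor $N\subset M$.

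First I would take $\sigma$ to be the automorphism induced by an order-$3$ element of $S_3$ and verify that it restricts to a planar automorphism of $P$: it fixes $\rho$ and the subfactor $N\subset M$ (it merely relabels the $\bbZ/2\bbZ\times\bbZ/2\bbZ$-grading), it is compatible with the $*$-structure, cups, caps, rotations, and the Markov trace, and it cyclically permutes $p_1\onto p_2\onto p_3\onto p_1$. Since by the generation statement cited from \cite{MR3157990,1208.3637} these projections generate $P$, we get $\sigma^3=\operatorname{id}$ while $\sigma,\sigma^2\neq\operatorname{id}$, so $\sigma$ has order exactly $3$. Now let $Q:=P^\sigma$, the planar $*$-subalgebra of $\sigma$-fixed elements; this is a subfactor planar algebra, and since $\sigma|_N$ and $\sigma|_M$ are outer with $[N:N^\sigma]=[M:M^\sigma]=3$, multiplicativity of the index gives $[M^\sigma:N^\sigma]=3+\sqrt5$. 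A computation of the principal graph of $Q$ — carried out in Izumi's Cuntz-algebra model in \cite{IzumiUnpublished}, and also accessible through the general theory of fixed-point subfactors — shows it to be $4442$, so by Theorem~\ref{thm:4442} the subfactor planar algebra $Q$ is isomorphic to the $4442$ subfactor planar algebra constructed in \cite{1208.3637}.

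The main obstacle is not the abstract symmetry but its realization: one must check that permuting the Cuntz-model endomorphisms really is implemented by a $*$-automorphism of the ambient algebra compatible with the whole planar operad — not merely by a tensor autoequivalence of the even fusion category — that $\sigma|_N$ and $\sigma|_M$ are properly outer, and then run the principal-graph bookkeeping for $Q$. Granting these points, Theorem~\ref{thm:4442} is exactly what makes the conclusion painless: only the principal graph of $Q$, and not an explicit isomorphism with the already-constructed $4442$ planar algebra, needs to be identified.
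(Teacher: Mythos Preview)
The paper itself does not give a full proof of this corollary; it attributes the result to Izumi's unpublished notes and instead records explicit formulas in Section~\ref{sec:4442}. There the two uncappable rotational eigenvectors that generate 4442 are written as manifestly cyclically symmetric expressions in the three minimal projections $P,Q,R$ at depth~4 of $3^{\bbZ/2\bbZ\times\bbZ/2\bbZ}$. This directly shows that 4442 sits inside the fixed-point planar algebra, and a dimension count at depths~4 and~5 (the $\sigma$-fixed part of $\spann\{P,Q,R\}$ is just $P+Q+R=\jw{4}$, so $P^\sigma$ is 4-supertransitive) finishes the identification. Your route---realize the $\bbZ/3\bbZ$ symmetry via $\operatorname{Aut}(\bbZ/2\bbZ\times\bbZ/2\bbZ)$ in the Cuntz model, compute the principal graph of the fixed points, then invoke Theorem~\ref{thm:4442}---is closer in spirit to Izumi's original argument, but the explicit-formula approach in the paper makes the ``contains 4442'' direction checkable without any Cuntz-algebra bookkeeping.

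There is, however, a genuine gap in your sketch. You identify the fixed-point planar algebra $Q=P^\sigma$ with the planar algebra of the subfactor $N^\sigma\subset M^\sigma$, and then compute $[M^\sigma:N^\sigma]=3+\sqrt5$ to argue about the index of $Q$. These are two different objects: $P^\sigma$ is a planar $*$-subalgebra of $P$ and automatically has the same loop parameter $\delta$, so its index is $3+\sqrt5$ for free; the detour through $N^\sigma\subset M^\sigma$ is both unnecessary and unjustified, since there is no general theorem identifying the planar algebra of $N^\sigma\subset M^\sigma$ with $P^\sigma$. The principal-graph step you need is the principal graph of the planar subalgebra $P^\sigma$, and you have simply deferred that to \cite{IzumiUnpublished}. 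That deferral is exactly the hard part; the paper's explicit formulas are what let one avoid it.
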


\paragraph{Acknowledgements.}
The author would like to thank Pinhas Grossman, Masaki Izumi, Zhengwei Liu, and Emily Peters for helpful conversations.

Scott Morrison would like to thank the Erwin Schr\"odinger Institute and its 2014 programme on ``Modern Trends in Topological Quantum Field Theory'' for their hospitality.
Both authors would like to thank the Banff International Research Station for hosting the 2014 workshop on Subfactors and Fusion Categories.

Scott Morrison was supported by an Australian Research Council Discovery Early Career Researcher Award, DE120100232 and Discovery Project `Subfactors and symmetries' DP140100732.
David Penneys was supported in part by the Natural Sciences and Engineering Research Council of Canada.
Both authors were supported in part by DOD-DARPA grant HR0011-12-1-0009.

\section{Background}

We refer the reader to \cite{MR2972458,MR999799,MR3157990} for the definition of a subfactor planar algebra and its principal graphs.

\subsection{Graph planar algebra embedding and {\tt FusionAtlas}}

In \cite{MR1865703}, Jones defined the planar algebra of a bipartite graph.
Such a graph planar algebra $\cG_\bullet$ has most of the characteristics of a subfactor planar algebra; it is finite dimensional ($\dim(\cG_{n,\pm})<\infty$ for all $n\geq 0$), unitary, and spherical, but in general it is not evaluable, that is, we may have  such that $\dim(\cG_{0,\pm}) > 1$ .
Since the graph planar algebra is manifestly unitary, any evaluable planar subalgebra $\cP_\bullet\subset \cG_\bullet$is automatically a subfactor planar algebra.

To date, many subfactor planar algebras have been constructed by finding them as evaluable planar subalgebras of graph planar algebras.
This program was initiated by Jones in \cite{MR1929335}, and it has seen many applications, including the exotic extended Haagerup subfactor planar algebra constructed via Bigelow's jellyfish algorithm \cite{MR2979509}.

By \cite{MR2812459,gpa}, every subfactor planar algebra embeds in the graph planar algebra of its principal graph.
The embedding theorem, known to Jones long before its proof appeared, was exactly Jones' motivation for defining the graph planar algebra in the first place.

Constructing a subfactor planar algebra by finding it inside the graph planar algebra of its principal graph does not rely on the embedding theorem, but it does give us the motivation and courage to perform such computations.
However, finding an evaluable planar subalgebra $\cP_\bullet$ of a graph planar algebra $\cG_\bullet(\Gamma)$ does not imply that the principal graph of $\cP_\bullet$ is $\Gamma$. For example, every $\cG_\bullet(\Gamma)$ contains a canonical Temperley-Lieb planar subalgebra.
Further arguments are necessary to ensure the graph is correct.

In this article, we construct a 2D2 subfactor planar algebra and prove its uniqueness by finding it inside a graph planar algebra.
To accomplish the necessary calculations, which consist of multiplying rather large matrices corresponding to elements of the graph planar algebra, along with some basic linear algebra, we use {\tt FusionAtlas}, a package for {\tt Mathematica} and {\tt Scala} developed by Morrison-Penneys-Peters-Snyder-Tener for such computations.
We refer the reader to \cite[Section 1.1]{1208.3637} for obtaining a local copy and to \cite{MR2914056} for a terse tutorial on its use.

The {\tt arXiv} sources of this article contain in the {\tt code} subdirectory two files:
\begin{itemize}
\item {\tt Generator.nb}, which reconstructs the generator of 2D2 from the terse description in Notation \ref{nota:Generator}, checks it is an uncappable rotational eigenvector, and writes it to disk.
This notebook runs independently from {\tt FusionAtlas}.
\item {\tt G2D2.nb}, which performs all the matrix calculations and linear algebra necessary to derive the relations and prove the lemmas in Sections \ref{sec:3boxes}---\ref{sec:6boxes}.
\end{itemize}

Finally, we note that all calculations for this article, with the exception of Lemmas \ref{lem:Eigenvalue} and \ref{lem:ParameterFamilies}, are performed using the spherical convention for the graph planar algebra.
Those two lemmas use the lopsided convention of \cite{1205.2742} (see also \cite[Section 2.3]{1208.3637}).

\subsection{Principal graph stability and the jellyfish algorithm}\label{sec:StabilityAndJellyfish}

Suppose we have a subfactor planar algebra $\cP_\bullet$ with principal graphs $(\Gamma_+,\Gamma_-)$.
The article \cite{MR3157990} explains the connection between Bigelow's jellyfish algorithm \cite{MR2577673,MR2979509} and Popa's principal graph stability \cite{MR1334479}.

Recall that given a finite subset $\cS$ of a planar algebra, a train on $\cS$ is a planar diagram built from elements of $\cS$ such that every $S\in \cS$ is adjacent to the external boundary, and moreover, the $\star$ on each $S$ is in the same region as the $\star$ on the boundary, e.g.,
$$
\begin{tikzpicture}[baseline = -.3cm]
	\draw[thick, dashed] (-2, -1.9) rectangle (3,.8);
	\draw (-2,-1.2)--(3,-1.2);
	\draw (-1,0)--(-1,-1);
	\draw (0,0)--(0,-1);
	\draw (2,0)--(2,-1);
	\filldraw[thick, unshaded] (-1.5,-.8) rectangle (2.5,-1.6);
	\draw[thick, unshaded] (-1,0) circle (.4);
	\draw[thick, unshaded] (0,0) circle (.4);	
	\draw[thick, unshaded] (2,0) circle (.4);
	\node at (-1.65,-.9) {$\star$};
	\node at (-1.3,.5) {$\star$};	
	\node at (-.3,.5) {$\star$};
	\node at (1.7,.5) {$\star$};
	\node at (-1.8,.6) {$\star$};
	\node at (1,0) {$\cdots$};
	\node at (.5,-1.2) {$T$};
	\node at (-1,0) {$S_1$};
	\node at (0,0) {$S_2$};
	\node at (2,0) {$S_\ell$};
	\node at (-1.7,-1.4) {{\scriptsize{$k$}}};
	\node at (2.7,-1.4) {{\scriptsize{$k$}}};
	\node at (-1.25,-.6) {{\scriptsize{$2n_1$}}};
	\node at (-.25,-.6) {{\scriptsize{$2n_2$}}};
	\node at (1.75,-.6) {{\scriptsize{$2n_\ell$}}};
\end{tikzpicture}
$$
where $S_1,\dots, S_\ell\in \cS$ and $T$ is a single Temperley-Lieb diagram (we usually suppress the external disk, and the external star goes in the upper left corner).

Recall that a principal graph $\Gamma_\pm$ is stable at depth $n$ if $\Gamma_\pm$ does not merge or split between depths $n$ and $n+1$, and all edges between depths $n$ and $n+1$ are simple.
By \cite{MR1334479,MR3157990}, $\Gamma_\pm$ is stable at depth $n$ if and only if trains from $\cP_{n,\pm}$ span $\cP_{n+1,\pm}$.
By the main theorem of \cite{MR3157990}, if the principal graph $\Gamma_+$ is stable at depths $n$ and $n+1$ and the modulus $\delta>2$, then $\Gamma_+$ is stable for all depths $k\geq n$, and $\Gamma_\pm$ is finite.
This means that trains from $\cP_{n,+}$ span $\cP_{k,+}$ for all $k\geq 0$.
Using this theorem, we see that the jellyfish algorithm is universal for finite depth subfactor planar algebras in the following sense.

\begin{thm}[Universal Jellyfish Algorithm]\label{thm:EvaluableSubalgebra}
Suppose $\cP_\bullet$ is a shaded planar algebra in which closed loops are multiples of the empty diagram.
Suppose $\{\cS_{k,+}\}_{k}$ is a collection of finite subsets $\cS_{k,+} \subset \cP_{k,+}$, and let $\cS_{\leq k}=\bigcup_{j\leq k} \cS_{j,+}$.
Suppose the sets $\cS_{k,+}$ satisfy the following:
\begin{enumerate}[(1)]
\item 
for some $n$, $\cS_{n,+}$ generates $\cP_\bullet$ as  a planar algebra, 
\item
$\cS_{n,+}$ has 2-strand jellyfish relations, i.e., for all $s\in \cS_{n,+}$,
$\begin{tikzpicture}[baseline = -.1cm]
	\fill[shaded] (-.5,-.6)--(-.5,.6) -- (-.65,.6) -- (-.65,-.6);
	\draw (0,-.6)--(0,.6);
	\draw (-.5,-.6)--(-.5,.6);
	\draw (-.65,-.6)--(-.65,.6);
	\node at (.15,.5) {{\scriptsize{$n$}}};
	\node at (.15,-.5) {{\scriptsize{$n$}}};
	\nbox{unshaded}{(0,0)}{.35}{0}{0}{$s$};
\end{tikzpicture}
\in\spann(\trains_{n+2,+}(\cS_{\leq n}))$,
\item 
for $j<n$, adding any cap to any element of $\cS_{j+1,+}$, except on the left, gives an element of $\spann(\cS_{j,+})\oplus \cT\cL_{j,+}$, and
\item
for all $j\leq k\leq n$, applying the tangle 
$
\begin{tikzpicture}[baseline = -.1cm, xscale=-1]
	\draw (0,1.3)--(0,.4);
	\draw (0,-1.3)--(0,-.4);		
	\node at (.15,1.2) {{\scriptsize{$j$}}};
	\node at (.15,0) {{\scriptsize{$j$}}};
	\node at (.15,-1.2) {{\scriptsize{$k$}}};
	\node at (-1.1,.6) {{\scriptsize{$k-j$}}};
	\draw (0,1.3) -- (0,-.35) -- (-.25,-.35) -- (-.25,-.25) arc (0:90:.15cm) arc (-90:-180:.25cm) -- (-.65,1.3);
	\nbox{unshaded}{(0,-.6)}{.35}{0}{0}{}
	\nbox{unshaded}{(0,.6)}{.35}{0}{0}{}
\end{tikzpicture}
$
to an element in $\cS_{k,+}\times \cS_{j,+}$, or the reflection to an element in $\cS_{j,+}\times \cS_{k,+}$, gives an element in $\spann(\cS_{k,+})\oplus\cT\cL_{k,+}$.
\end{enumerate}
Then, for $k \leq n$, 
$
\cP_{k,+} = \trains_{k,+}\left(\cS_{\leq k}\right).
$ 
In particular, if $\cS_{0,+} = \emptyset$, $\cP_\bullet$ is evaluable.
\end{thm}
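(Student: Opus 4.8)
One inclusion, $\spann(\trains_{k,+}(\cS_{\le k})) \subseteq \cP_{k,+}$, is automatic, so the content is the reverse one. By hypothesis~(1) the set $\cS_{n,+}$ generates $\cP_\bullet$, so every element of every box space $\cP_{m,+}$ is a linear combination of labelled planar tangles all of whose input disks are $n$-boxes filled with elements of $\cS_{n,+}$; call the number of input disks the \emph{weight}. The first stage is to show, by induction on weight, that \emph{every} such diagram, in \emph{every} $\cP_{m,+}$, lies in $\spann(\trains_{m,+}(\cS_{\le n}))$. The induction has to range over all $m$ at once, since the jellyfish moves below temporarily pass through box spaces of large index even when one begins inside $\cP_{k,+}$ with $k\le n$. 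Weight $0$ is the base case: a Temperley--Lieb diagram is a train with no heads.

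For the inductive step I would run Bigelow's jellyfish algorithm \cite{MR2577673,MR2979509}, whose only substantive input is the $2$-strand jellyfish relations of hypothesis~(2). Given a weight-$w$ diagram $D$, choose an input disk $s$ together with an arc joining the boundary $\star$-region to the star region of $s$ and meeting $D$ only in strands; such an arc exists because the face-adjacency graph of $D$ is connected, and it crosses an even number of strands because the two star regions carry the same shading. Dragging the head of $s$ out along this arc, jellyfish-style, and resolving the crossings two at a time via hypothesis~(2) --- each application replacing a local picture ``$s$ beside two through-strands'' by a linear combination of one-headed trains over $\cS_{\le n}$ --- rewrites $D$ as a linear combination of diagrams, each a Temperley--Lieb tangle applied to a one-headed train over $\cS_{\le n}$ (star correctly against the boundary) together with a weight-$(w-1)$ diagram. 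By the inductive hypothesis the weight-$(w-1)$ piece is a combination of trains over $\cS_{\le n}$; gluing it on and sliding the freshly exposed head along the boundary into the existing row of heads realises each summand literally as a train. Hence $\cP_{m,+} = \spann(\trains_{m,+}(\cS_{\le n}))$ for every $m$, in particular for $m=k\le n$.

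The second stage cuts $\cS_{\le n}$ down to $\cS_{\le k}$ when $k\le n$: I would show every train in $\cP_{k,+}$ over $\cS_{\le n}$ rewrites as a combination of trains over $\cS_{\le k}$, by induction on the sum of the depths of its heads. If some head has depth $>k$, look at how the base Temperley--Lieb tangle $T$ wires the heads to each other and to the $2k$ boundary points: a head of depth $>k$ has more than $2k$ endpoints, hence cannot be wired to the boundary alone, so either (i) some head $S_i$, of depth $n_i$, has two adjacent endpoints joined by a cap of $T$, whence hypothesis~(3) (applicable as all head depths are $\le n$ by the first stage) rewrites that capped head in $\spann(\cS_{n_i-1,+})\oplus\cT\cL_{n_i-1,+}$ and strictly drops the depth sum, or (ii) no head has such a cap, whence planarity of $T$ forces two neighbouring heads $S_i,S_j$ to be joined so that the smaller is absorbed into the larger in precisely the pattern of the tangle of hypothesis~(4) (or its reflection), rewriting the pair as a single head in $\spann(\cS_{\max(n_i,n_j),+})\oplus\cT\cL_{\max(n_i,n_j),+}$ and again strictly dropping the depth sum. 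The process terminates with all head depths $\le k$. Finally, when $\cS_{0,+}=\emptyset$ we have $\cS_{\le 0}=\emptyset$, so $\cP_{0,+}=\spann(\trains_{0,+}(\emptyset))$ is spanned by closed Temperley--Lieb diagrams, which are scalar multiples of the empty diagram by hypothesis, and $\cP_\bullet$ is evaluable.

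The hard part will be the jellyfish drag of the second paragraph: even granting Bigelow's algorithm one must verify, in the shaded setting, that each move past two strands genuinely instantiates~(2) in the orientation in which it is stated, that the emerging head carries its star into the region adjacent to the boundary, and --- the point most easily missed --- that when the one-headed train produced by~(2) is re-glued to the weight-$(w-1)$ remainder the star regions of all heads amalgamate into the single boundary region, so that the result is a train in the literal sense. A secondary point is the planar bookkeeping of stage two: one must check that when a head of depth $>k$ is present the available cap or neighbour-merge always occurs in exactly the position to which~(3) --- with its ``except on the left'' clause --- or~(4) applies.
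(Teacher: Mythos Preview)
Your two-stage strategy matches the paper's proof exactly: first use the 2-strand jellyfish relations to write everything as trains on $\cS_{\leq n}$, then cut down to $\cS_{\leq k}$ using (3) and (4). Stage~1 is fine and is the paper's step~(1), which simply cites \cite[Lemma 3.7]{MR3157990} rather than rerunning the induction on weight.

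The gap is in stage~2, and it is precisely the step you label as ``secondary'' planar bookkeeping. Your dichotomy --- either (i) some head carries a self-cap, or (ii) two neighbouring heads sit in the absorption pattern of hypothesis~(4) --- does \emph{not} follow from ``some head has depth $>k$'' by planarity alone in any routine way; this is in fact the main combinatorial content of the theorem. The paper isolates it as a separate lemma (Lemma~\ref{lem:ReducedTrains}): a \emph{reduced} $m$-car train in $\cP_{k,+}$ (no self-caps, no generator connected to a neighbour by at least half its strands) uses only generators from $\cS_{\leq k-m+1}$. The proof constructs a planar polygon whose vertices are the generators plus one extra vertex for the external boundary, and invokes \cite[Theorem~3.9]{1308.5723} to show that such a polygon either has every generator adjacent to the boundary vertex, or has an interior ``corner'' generator --- which is then forced, by pigeonhole on its $2n_i$ strands, to be connected to a neighbour by at least half of them. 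The contrapositive is exactly what your case~(ii) needs: if some head has depth $>k$ then the train is not reduced, so either (3) or (4) applies. Without this polygon argument your (ii) is an assertion rather than a proof; you should upgrade it from a side remark to the central lemma.

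Incidentally, your worry about the ``except on the left'' clause in~(3) is a non-issue: in a train every head's $\star$ sits in the external region, so any self-cap produced by the Temperley--Lieb body joins two strands on the side \emph{away} from the~$\star$, and the forbidden left cap (the one wrapping around the $\star$) simply cannot arise.
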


\begin{remark}
If $\cP_\bullet$ is a planar $*$-algebra, it is not necessary to separately check the reflection of the absorption tangle in item 4).
\end{remark}

We first prove a helpful lemma about reduced trains.
Recall that a train is called {\it reduced} if no generator is connected to itself, and no generator is connected to one of its neighbors by at least half its strands.

\begin{lem}\label{lem:ReducedTrains}
A reduced train on $\cS_{\leq n}$ in $\cP_{k,+}$ with $m\geq 1$ generators (a reduced $m$-car train) is, in fact, a reduced train on $\cS_{\leq k-m+1}$ with no generators from higher $\cS_{j,+}$'s.
\end{lem}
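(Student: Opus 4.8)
The plan is to induct on the number of cars $m$ and to exploit the fact that in a reduced train the ``width'' available to each generator is severely constrained. The key numerical observation is this: suppose a car in the train is an element of $\cS_{j,+}$, so it has $2j$ strands emanating from it. In a reduced train these $2j$ strands must all reach either the external boundary or a neighbouring car, but since the train is reduced, strictly fewer than half of them — that is, at most $j-1$ — can go to any single neighbour (and none can return to the same generator). A generator in the interior of the train has two neighbours, so at most $2(j-1) = 2j-2$ of its strands go to neighbours, forcing at least $2$ strands out to the boundary; a generator at one end of the train has only one neighbour (on the side facing the rest of the train) together with the boundary, so at most $j-1$ strands go inward. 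I would set up a counting argument along the strand of the train that tracks, for the $i$-th car from one end, an upper bound on its box size $j_i$ in terms of its position.

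Concretely, first I would handle the base case $m=1$ separately (a single-car reduced train in $\cP_{k,+}$: the generator has $2j$ strands, none connected to itself, all going to the boundary, together with the requirement that the whole diagram live in $\cP_{k,+}$, so $j \leq k = k - m + 1$, as desired). For the inductive step with $m \geq 2$ cars, I would ``peel off'' the leftmost car. Reading the train from the left, the leftmost generator $S_1 \in \cS_{j_1,+}$ has one interior neighbour $S_2$; since the train is reduced, at most $j_1 - 1$ of $S_1$'s strands connect to $S_2$ and none loop back to $S_1$, so at least $j_1 + 1$ of its strands run to the boundary. Cutting along a curve just to the right of $S_1$ exhibits the remaining $S_2, \dots, S_m$ as a reduced $(m-1)$-car train living in $\cP_{k',+}$ where $k'$ counts the strands crossing that cut; because $S_1$ contributes at least $j_1+1 \geq 2$ strands to the boundary of the ambient disk that do \emph{not} cross the cut, and the Temperley-Lieb part only adds through-strands or nested cups, one gets $k' \leq k - 2$, hence $k' \leq k - 1$. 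Wait — I need to be a little more careful: the right statement to induct on should directly give the bound $j_i \leq k - m + 1$ for every car simultaneously, so rather than a crude ``$k$ drops by at least $2$'' I would phrase the induction as: in a reduced $m$-car train in $\cP_{k,+}$, every car lies in some $\cS_{j,+}$ with $j \leq k - m + 1$. Peeling off $S_1$ as above leaves a reduced $(m-1)$-car train in $\cP_{k',+}$ with $k' \leq k - 2$ when $S_1$ is large, but when $S_1$ is small ($j_1$ small) the bound $j_1 \leq k-m+1$ must be extracted directly, so the induction hypothesis applied with $k' \leq k-2$ gives each of $S_2,\dots,S_m$ in $\cS_{j,+}$ with $j \leq k' - (m-1) + 1 \leq (k-2) - m + 2 = k - m$, which is even stronger, and for $S_1$ one argues from both ends of the train.

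To pin down $S_1$'s own bound I would symmetrically peel from the right as well, or better, count globally: number the cars $1,\dots,m$ left to right, let $j_i$ be the box size of car $i$, and let $b_i \geq 0$ be the number of strands of car $i$ going to the external boundary and $e_i \geq 0$ the number going to its right neighbour. Reducedness gives $e_i \leq j_i - 1$ and $e_i \leq j_{i+1} - 1$, and matching strand counts across the cut between car $i$ and car $i+1$ together with the through-strand count of the intervening Temperley-Lieb diagram yields a recursion bounding the ``remaining width'' after car $i$. Summing the inequalities $b_i \geq j_i - e_{i-1} - e_i$ (with $e_0 = e_m = 0$) over $i$ and combining with $\sum b_i \leq k$ plus the reducedness bounds on the $e_i$ forces $\max_i j_i \leq k - m + 1$.

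The main obstacle I anticipate is bookkeeping the Temperley-Lieb diagram $T$ correctly: a train is not just the generators glued directly to each other but generators connected through a single Temperley-Lieb tangle, which can contain nested cups and caps that route strands in ways that are easy to mishandle. I need to argue that $T$ can only decrease or preserve the relevant strand counts — nested cups between two cars only shrink the effective connection, and cups opening to the boundary only add to $b_i$ — so that the naive ``direct gluing'' bound is conservative. Making this precise (probably by isotoping $T$ into a normal form, or by an Euler-characteristic / planarity count on the regions of the train diagram) is the step that requires genuine care; everything else is the clean induction sketched above. The conclusion ``no generators from higher $\cS_{j,+}$'s'' is then immediate, since $j \leq k - m + 1$ means every car already lies in $\cS_{\leq k - m + 1}$, and a reduced train on $\cS_{\leq n}$ all of whose cars lie in $\cS_{\leq k-m+1}$ is by definition a reduced train on $\cS_{\leq k-m+1}$.
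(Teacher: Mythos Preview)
Your overall architecture --- induct on $m$, peel off an end car, use that reducedness caps the number of strands to any single neighbour at $j-1$, and conclude that the remaining $(m-1)$-car train sits in some $\cP_{\ell,+}$ with $\ell\leq k-1$, then repeat from the other end to bound the last car --- is exactly the paper's approach.

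There is, however, a genuine gap. You assert at the outset that a generator's strands ``must all reach either the external boundary or a neighbouring car'', and your entire counting (the inequalities $b_i \geq 2j_i - e_{i-1} - e_i$, the cut bound, everything) rests on this. But it is not automatic: the generators sit above a single Temperley-Lieb box, and a priori a strand from $S_i$ can connect through that box to some $S_j$ with $|i-j|>1$. Planarity only forbids such a long-range connection once you already know that every generator between $S_i$ and $S_j$ sends a strand to the boundary. Your flagged worry about ``bookkeeping the Temperley-Lieb diagram'' is in the right vicinity but misidentifies the danger: it is not that cups shrink counts, it is that non-adjacent cars might be connected, which would invalidate all of your neighbour-only inequalities.

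The paper closes this gap with a separate argument \emph{before} cutting. One forms a planar polygon whose vertices are $S_1,\dots,S_m$ together with one extra vertex for the external boundary, and draws a diagonal whenever two vertices are connected by at least one strand. An elementary combinatorial fact about triangulated polygons (the paper cites \cite[Theorem~3.9]{1308.5723}) then says: either every generator vertex is joined to the boundary vertex, or there is some interior generator $S_i$ (neither first nor last) with no incident diagonals, hence joined only to its polygon-neighbours $S_{i-1}$ and $S_{i+1}$. In that case reducedness gives $2j_i \leq (j_i-1)+(j_i-1)$, a contradiction. So every generator does touch the boundary, planarity then forces all inter-generator strands to run between adjacent cars, and your peeling argument (with the correct bound $k'\leq k-1$; your $k'\leq k-2$ is an arithmetic slip) goes through.
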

\begin{proof}
We proceed by induction on $m$.
The base case $m=1$ is trivial.
Suppose we have a reduced train on $\cS_{\leq n}$ with $m>1$ generators in $\cP_{k,+}$.
Label the generators across the top by $s_i\in \cS_{j_i,+}$ for $i=1,\dots, m$.

Since the train is reduced, every generator must connect to the external boundary.
To see this, construct a planar polygon with diagonals whose vertices are the train generators and an extra vertex corresponding to the external boundary, with weighted edges counting the relevant numbers of strands. The elementary proof of \cite[Theorem 3.9]{1308.5723} now shows that this planar polygon either has every generator vertex connected to the external vertex, or there is a corner (that is, a vertex with no incident diagonals) which is a generator, and neither the first or last generator in the train. This corner must be connected to a nearest neighbor by at least half its strands, contradicting our assumption that the train is reduced.

Suppose $s_{m-1},s_m$ are connected by $a$ strings as in the following picture:
$$
\begin{tikzpicture}[baseline = -.1cm]
	\draw (0,0) -- (2.45,0);
	\draw (3.05,0) -- (5.5,0);
	\draw[dashed] (4.85,.5) -- (4.85,-.8);
	\foreach \x in {0,1.5,4,5.5}
	{
	\draw (\x,0) -- (\x,-.8);
	}
	\ncircle{unshaded}{(0,0)}{.4}{90}{\scriptsize{$s_1$}}
	\ncircle{unshaded}{(1.5,0)}{.4}{90}{\scriptsize{$s_2$}}
	\ncircle{unshaded}{(4,0)}{.4}{90}{\scriptsize{$s_{m-1}$}}
	\ncircle{unshaded}{(5.5,0)}{.4}{90}{\scriptsize{$s_m$}}
	\node at (2.75,0) {\scriptsize{$\cdots$}};
	\node at (4.65,.2) {\scriptsize{$a$}};
	\node at (6.1,-.6) {\scriptsize{$2j_m-a$}};
\end{tikzpicture}\,,
$$
with possibly some caps interspersed underneath which only connect to the external boundary.
Cutting along these $a$ strings, we see the concatenation of a reduced $(m-1)$-car train and a reduced 1-car train.
Since $a\leq j_m-1$, we see $a< 2j_m - a$, so this $(m-1)$-car train has strictly fewer boundary points, i.e., it lives in $\cP_{\ell,+}$ for some $\ell\leq k-1$. 
By the induction hypothesis, for $i=1,\dots, m-1$,
$$
j_i\leq \ell-(m-1)+1\leq (k-1)-(m-1)+1=k-m+1.
$$
Now we repeat the above argument by cutting between the first two cars of the original reduced $m$-car train, which lets us conclude that $j_m \leq k-m+1$ as well.
\end{proof}

\begin{proof}[Proof of Theorem \ref{thm:EvaluableSubalgebra}]
The proof is a generalization of the jellyfish algorithm.
We need to write an element $x\in \cP_{k,+}$ as a linear combination of trains on $\cS_{\leq k}$.
We use the following steps:
\begin{enumerate}[(1)]
\item
Since $\cS_{n,+}$ generates $\cP_\bullet$ and has 2-strand jellyfish relations, for every $j\geq 0$, trains on $\cS_{\leq n}$ span $\cP_{j,+}$ by \cite[Lemma 3.7]{MR3157990}.
We can thus write $x$ as a linear combination of trains on $\cS_{\leq n}$.
This is analogous to the first step of the usual jellyfish algorithm.

\item
Using properties (3) and (4), we may reduce $x\in \cP_{k,+}$ to a linear combination of reduced trains on $\cS_{\leq n}$.
This is analogous to the second step of the usual jellyfish algorithm.
By Lemma \ref{lem:ReducedTrains}, any reduced train on $\cS_{\leq n}$ in $\cP_{k,+}$ is actually a reduced train on $\cS_{\leq k}$.
\qedhere
\end{enumerate}
\end{proof}

\begin{remark}\label{rem:FiniteDepth}
Given a subfactor planar algebra $\cP_\bullet$ of finite depth $n$, one can use the previous algorithm to give a universal jellyfish presentation as follows.
For every $j\leq n$, let $\cS_{j,+}$ be a $*$-closed basis of $\cP_{j,+}\cap \cT\cL_{j,+}^\perp$.
With these large generating sets, properties (3) and (4) of Theorem \ref{thm:EvaluableSubalgebra} necessarily hold, but it may be difficult to determine the structure constants.
Since $\Gamma_+$ is stable at depths $n$ and $n+1$, we know that trains from $\cS_{n,+}$ span $\cP_{k,+}$ for all $k\geq 0$.
\end{remark}

In practice this approach is too inefficient, and it is difficult to work with such a presentation. The essential idea of this article is that one can often, with care, choose smaller subsets for the $\cS_{j,+}$'s, for which it is still possible to verify the hypotheses of the above theorem. As an example,  if $\Gamma_+$ stabilizes before the final depth, we can always choose the higher $\cS_{j,+}$'s to be empty.

The next example motivates the calculations performed in Sections \ref{sec:3boxes}---\ref{sec:6boxes} and the results proved in Section \ref{sec:Evaluation}.

\begin{ex}
Suppose $\cP_\bullet$ is a subfactor planar algebra with principal graph 2D2.
Since 2D2 is 2-supertransitive, we choose $\cS_{j,+}=\emptyset$ for $j\leq 2$.
We let $\cS_{3,+}=\{T\}$, where $T$ is the new uncappable rotational eigenvector, which must satisfy $T^2=\jw{3}$.
Finally, we let $\cS_{4,+}$ be the system of matrix units corresponding to the vertex at depth 4, where the minimal projections are the projections obtained from the projections $(\jw{3}\pm T)/2$ via the generalized Wenzl relation \cite[Lemma 3.7]{MR2922607}, \cite[Lemma 2.11]{1307.5890}.

By construction, it is easy to derive all the necessary structure constants of the annular and absorption maps amongst the $\cS_{j,+}$'s.
The universal jellyfish algorithm now shows that for each set of 2-strand jellyfish relations for $\cS_{4,+}$, the corresponding finitely presented planar algebra is evaluable (but possibly the zero planar algebra).
\end{ex}

As we will see, for any element $T$ satisfying $T^2=\jw{3}$ in the graph planar algebra of 2D2, we can derive 2-strand jellyfish relations for the corresponding elements of $\cS_{4,+}$ by Theorem \ref{thm:2Strand}.
By Lemma \ref{lem:ParameterFamilies} there is only a 1-parameter family (up to sign) of such elements $T$, yet the resulting finite presentations are independent of the parameter by Theorem \ref{thm:Evaluable}, establishing the uniqueness results in Theorems \ref{thm:2D2} and \ref{thm:Unique}.

\section{The 3-box spaces}\label{sec:3boxes}

Let $\cG_\bullet$ denote the graph planar algebra \cite{MR1865703} of 2D2.

\begin{remark}\label{rem:ProofsOmitted}
Every lemma in Sections \ref{sec:3boxes}---\ref{sec:6boxes} is a direct, but tedious calculation in $\cG_\bullet$.
These calculations can be found in the {\tt Mathematica} notebook {\tt G2D2} bundled with the {\tt arXiv} sources of this article.
Moreover, after Lemma \ref{lem:ParameterFamilies}, each subsequent lemma holds \emph{independent} of the parameter $\lambda$ introduced in Lemma \ref{lem:ParameterFamilies} below.
\end{remark}

\begin{lem}\label{lem:Eigenvalue}
Any solution to $T^2=\jw{3}$ in $\cG_{3,+}$ which is an uncappable rotational eigenvector must have rotational eigenvalue $\omega_T=1$.
\end{lem}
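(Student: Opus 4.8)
The plan is to pin down the rotational eigenvalue $\omega_T$ purely from the constraints $T^2 = \jw{3}$, uncappability, and the fact that $T \in \cG_{3,+}$, using the interplay between the rotation and multiplication in the planar algebra. First I would record what rotational eigenvectors in $\cG_{3,+}$ can look like: since $\cP_{\leq 2} \subset \TL$ (as $2D2$ is $2$-supertransitive) and $T$ is uncappable, $T$ lives in the ``new'' part of the $3$-box space, which is one-dimensional over the relevant subspace (the vertex at depth $3$ contributes a single new generator). The rotation $\rho$ acts on this space, and because $T$ is an eigenvector, $\rho(T) = \omega_T T$ for some root of unity $\omega_T$; since $T$ is a $3$-box, $\omega_T^{2\cdot 3} = 1$ a priori, but uncappability typically forces $\omega_T^3 = 1$, so $\omega_T \in \{1, \zeta, \zeta^2\}$ with $\zeta = e^{2\pi i/3}$.

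Next I would exploit $T^2 = \jw{3}$. Applying the rotation to both sides and using that $\rho$ is (anti-)multiplicative with respect to the relevant composition — more precisely, that rotating a product of two $3$-boxes stacked to give a $3$-box relates to rotating the factors — one gets a relation forcing $\omega_T^2$ to equal the rotational eigenvalue of $\jw{3}$, which is $1$ (Jones--Wenzl projections are rotationally invariant). Hence $\omega_T^2 = 1$, and combined with $\omega_T^3 = 1$ we conclude $\omega_T = 1$. The point requiring care is that $T^2$ here denotes the multiplication in $\cP_{3,+}$ (stacking $3$-boxes), and one must check the precise behavior of $\rho$ under this multiplication: rotation does not commute with multiplication on the nose, but $\rho(xy)$ for $3$-boxes is controlled (it equals a ``partial rotation'' expression), and for the specific case $y = x = T$ with $T$ uncappable and self-adjoint-up-to-scalar the extra terms vanish, leaving $\rho(T^2)$ proportional to $\rho(T)^2 = \omega_T^2 T^2$.

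Alternatively, and perhaps more robustly, I would argue via positivity/trace: evaluate a suitable closed diagram built from $T$ and its rotations, e.g. compare $\tr(T \rho(T)^*)$ or the coefficient of $\jw{3}$ in $T\rho(T)$, which must be a nonnegative real (by unitarity of $\cG_\bullet$), and show this pins $\omega_T$ to be real, hence $\pm 1$; then a parity or self-duality consideration at depth $3$ (the vertex is fixed by the duality on $2D2$) rules out $-1$. The main obstacle I anticipate is the bookkeeping for how $\rho$ interacts with the $3$-box multiplication — getting the partial-rotation identity stated correctly and verifying the cross terms drop out using uncappability — rather than any deep structural input; once that identity is in hand, the conclusion $\omega_T = 1$ is immediate. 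In the paper's setting this is, as Remark \ref{rem:ProofsOmitted} indicates, ultimately a direct computation in $\cG_\bullet$, but the conceptual skeleton above explains why the answer is forced.
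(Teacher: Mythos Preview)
Your proposal has a genuine gap at the central step. The claim that $\rho(T^2)$ equals (or is proportional to) $\rho(T)^2 = \omega_T^2\, T^2$, ``with cross terms that vanish by uncappability'', is not correct. The half-rotation $\cF^3 : \cG_{3,+}\to\cG_{3,-}$ is an anti-homomorphism, so $\cF^3(T^2)=\cF^3(T)^2$, but that lands in $\cG_{3,-}$ and only tells you $\cF^3(T)^2=\check f^{(3)}$, which is a separate statement (cf.\ Lemma~\ref{lem:ParameterFamilies}), not a constraint on $\omega_T$. The $2$-click rotation $\rho=\cF^2$ on $3$-boxes is \emph{not} multiplicative for the standard stacking product: $\rho(xy)$ is the product of $\rho(x)$ and $\rho(y)$ with respect to a \emph{different} planar multiplication, and chasing this around using $\rho^3=\mathrm{id}$ only returns you to $T^2=\jw{3}$ without extracting $\omega_T^2=1$. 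There are no ``extra terms'' here that uncappability could kill; the obstruction is structural.

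Two smaller issues compound this. First, the uncappable (low-weight) part of $\cG_{3,+}$ is \emph{not} one-dimensional; you are conflating the subfactor planar algebra (where the annular multiplicity at depth $3$ is $1$) with the much larger graph planar algebra. Second, the positivity alternative does not work as stated: $\tr(T\,\rho(T)^*)=\overline{\omega_T}\,\|T\|^2$ is just an inner product and there is no reason it must be nonnegative real; likewise the coefficient of $\jw{3}$ in $T\rho(T)$ is $\omega_T$, and nothing forces it to be real. In the paper the lemma is established by a direct computation in $\cG_\bullet$ (in the lopsided convention), exactly as Remark~\ref{rem:ProofsOmitted} says; the conceptual shortcut you describe does not go through, and the Remark following the lemma points instead to \cite[Proposition~3.15]{1307.5890} for the analogous statement in the subfactor planar algebra setting.
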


\begin{remark}
We know that a low weight rotational eigenvector $T$ generating a subfactor planar algebra with principal graphs as in Theorem \ref{thm:2D2} must have $\omega_T=1$ by \cite[Proposition 3.15]{1307.5890}.
\end{remark}

\begin{lem}\label{lem:ParameterFamilies}
There are exactly two 1-parameter families $T=\pm T(\lambda)\in\cG_{3,+}$ for $\lambda\in \bbT$ such that:
\begin{itemize}
\item $T$ is an uncappable rotational eigenvector with eigenvalue $\omega_T=1$,
\item $T$ is self-adjoint, and
\item $T^2=\jw{3}\in \cG_{3,+}$.
\end{itemize}
These solutions also satisfy $\cF(T)^2 = \jw{3}\in \cG_{3,-}$.
\end{lem}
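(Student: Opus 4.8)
The plan is to reduce the entire statement to an explicit finite computation in the graph planar algebra $\cG_\bullet$ of $2D2$, carried out in the lopsided convention (which clears the square roots coming from the modulus and keeps the matrix entries manageable). First I would fix a $*$-vertex and write down the loop basis of $\cG_{3,+}$, so that a general element becomes a vector of scalar unknowns. Then I would impose, one at a time, the three \emph{linear} constraints appearing in the statement: uncappability, i.e.\ lying in the kernel of each of the capping maps $\cG_{3,+}\to\cG_{2,+}$; being a rotational eigenvector with eigenvalue $\omega_T=1$ (by Lemma~\ref{lem:Eigenvalue} this is in any case the only eigenvalue that could occur), i.e.\ invariance under the one-click rotation tangle; and self-adjointness $T=T^*$. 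Solving the resulting homogeneous linear system cuts $\cG_{3,+}$ down to a small space of candidate elements depending on a few real parameters; the quadratic constraint below will fix all but one of them together with an overall sign, and I expect the surviving parameter to be a phase, reflecting the residual gauge freedom of the graph planar algebra embedding.

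The heart of the proof is the quadratic relation $T^2=\jw{3}\in\cG_{3,+}$: one multiplies two parametrized $3$-boxes and matches the product against the explicitly known third Jones--Wenzl projection. This becomes a system of polynomial equations in the remaining parameters, and solving it by elimination one should find that its real solution set is exactly $\{\pm T(\lambda):\lambda\in\bbT\}$, where $T(\lambda)$ is the one-parameter family recorded in Notation~\ref{nota:Generator}: several of the equations pin down the other parameters as functions of one remaining parameter, one further equation forces that parameter onto the unit circle, and the global sign is genuinely free because $(-T)^2=T^2$. Two things must be verified here: that $T(\lambda)$ satisfies uncappability, the eigenvalue condition, self-adjointness, and $T(\lambda)^2=\jw{3}$ for \emph{every} $\lambda\in\bbT$; and---the more delicate half---that the polynomial system admits \emph{no further} solutions, neither extra components nor stray points off the circle.

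For the final sentence I would apply the Fourier transform $\cF\colon\cG_{3,+}\to\cG_{3,-}$ to the explicit $T(\lambda)$ and compute $\cF(T(\lambda))^2$ directly in the algebra $\cG_{3,-}$; since $\cF$ does not respect multiplication this is not formal, but it is once more a single matrix computation, and it returns $\jw{3}\in\cG_{3,-}$ independently of $\lambda$. The main obstacle is the completeness half of the quadratic step: exhibiting a circle of solutions is routine, but certifying that $T^2=\jw{3}$ has no other uncappable self-adjoint rotational-eigenvector solutions requires a careful Gr\"obner-basis/elimination argument over $\Real$. All of this is of the ``direct but tedious'' nature flagged in Remark~\ref{rem:ProofsOmitted}, and is carried out in the {\tt G2D2} notebook.
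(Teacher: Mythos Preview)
Your proposal is correct and matches the paper's approach: the paper offers no written argument for this lemma beyond Remark~\ref{rem:ProofsOmitted}, deferring entirely to the {\tt G2D2} notebook, and what you describe (solve the linear uncappability/rotation/self-adjointness constraints in the loop basis, then the quadratic $T^2=\jw{3}$, all in the lopsided convention, and finally check $\cF(T)^2=\jw{3}$ directly) is precisely the shape of that computation. Your emphasis on the completeness half of the quadratic step is apt, as that is indeed where the content lies.
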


\begin{nota}\label{nota:Generator}
We give the one parameter family $T=+T(\lambda)$ for $\lambda\in \bbT$.
We may specify $T$ by its values on loops of length 6 on 2D2 which stay within the central diamond by \cite[Appendix A]{1308.5197}, where the vertices are labelled $W,S,E,N$ counter-clockwise starting at the left:
$$
2D2=
\begin{tikzpicture}[baseline=-.1cm]
	\filldraw (0,0) circle (.05cm);
	\filldraw (1,0) circle (.05cm);
	\filldraw (2,0) circle (.05cm) node [above] {\scriptsize{$W$}};
	\filldraw (3,.5) circle (.05cm) node [below] {\scriptsize{$N$}};
	\filldraw (3,-.5) circle (.05cm) node [above] {\scriptsize{$S$}};
	\filldraw (4,0) circle (.05cm) node [above] {\scriptsize{$E$}};
	\filldraw (5,0) circle (.05cm);
	\filldraw (6,0) circle (.05cm);
	\draw (0,0)--(2,0)--(3,.5)--(4,0)--(6,0);
	\draw (2,0)--(3,-.5)--(4,0);
\end{tikzpicture}
$$
Additional calculations are necessary to show that the expanded generator is, in fact, a self-adjoint uncappable rotational eigenvector.
These computations are carried out in the {\tt Mathematica} notebook {\tt Genetator.nb}, which runs independently from {\tt FusionAtlas}.

\begin{align*}
T(\mbox{WSWSWS}) & = 2-\sqrt{5} &
T(\mbox{WSWSWN}) & = \frac{1}{2} \left(3-\sqrt{5}\right) \displaybreak[1]\\
T(\mbox{WSWSES}) & = \sqrt{5}-2 &
T(\mbox{WSWSEN}) & = -\frac{\sqrt{\frac{1}{2} \left(5 \sqrt{5}-11\right)}}{\lambda } \displaybreak[1]\\
T(\mbox{WSWNWN}) & = \frac{1}{2} \left(\sqrt{5}-3\right) &
T(\mbox{WSWNES}) & = -\sqrt{\frac{1}{2} \left(5 \sqrt{5}-11\right)} \lambda  \displaybreak[1]\\
T(\mbox{WSWNEN}) & = \frac{1}{2} \left(3-\sqrt{5}\right) &
T(\mbox{WSESWN}) & = \frac{1}{2} \left(\sqrt{5}-3\right) \displaybreak[1]\\
T(\mbox{WSESES}) & = 2-\sqrt{5} &
T(\mbox{WSESEN}) & = \frac{\sqrt{\frac{1}{2} \left(5 \sqrt{5}-11\right)}}{\lambda } \displaybreak[1]\\
T(\mbox{WSENWN}) & = -\frac{\sqrt{\frac{1}{2} \left(5 \sqrt{5}-11\right)}}{\lambda } &
T(\mbox{WSENES}) & = \frac{1}{2} \left(3-\sqrt{5}\right) \displaybreak[1]\\
T(\mbox{WSENEN}) & = \frac{\sqrt{\frac{1}{2} \left(5 \sqrt{5}-11\right)}}{\lambda } &
T(\mbox{WNWNWN}) & = \sqrt{5}-2 \displaybreak[1]\\
T(\mbox{WNWNES}) & = -\sqrt{\frac{1}{2} \left(5 \sqrt{5}-11\right)} \lambda  &
T(\mbox{WNWNEN}) & = 2-\sqrt{5} \displaybreak[1]\\
T(\mbox{WNESES}) & = \sqrt{\frac{1}{2} \left(5 \sqrt{5}-11\right)} \lambda  &
T(\mbox{WNESEN}) & = \frac{1}{2} \left(\sqrt{5}-3\right) \displaybreak[1]\\
T(\mbox{WNENES}) & = \sqrt{\frac{1}{2} \left(5 \sqrt{5}-11\right)} \lambda  &
T(\mbox{WNENEN}) & = \sqrt{5}-2 \displaybreak[1]\\
T(\mbox{ESESES}) & = \sqrt{5}-2 &
T(\mbox{ESESEN}) & = \frac{1}{2} \left(\sqrt{5}-3\right) \displaybreak[1]\\
T(\mbox{ESENEN}) & = \frac{1}{2} \left(3-\sqrt{5}\right) &
T(\mbox{ENENEN}) & = 2-\sqrt{5}
\end{align*}
\end{nota}

\begin{defn}\label{defn:3boxes}
Define the following elements of $\cG_\bullet$ in terms of $T=+T(\lambda)$:
$$
e_{3,+}^1 = \frac{1}{2}(\jw{3}+T)
\text{ and }
e_{3,+}^2 = \frac{1}{2}(\jw{3}-T).
$$
Note that these definitions immediately imply
\begin{itemize}
\item the $e_{3,+}^i$ are rectangularly uncappable projections, and
\item the partial trace 
$
\begin{tikzpicture}[baseline = -.1cm]
	\draw (0,-.6)--(0,.6);
	\draw (.25,.35) arc (180:0:.1cm) -- (.45,-.35) arc (0:-180:.1cm);
	\node at (-.15,.5) {{\scriptsize{$2$}}};
	\node at (-.15,-.5) {{\scriptsize{$2$}}};
	\nbox{unshaded}{(0,0)}{.35}{0}{0}{$e_{3,+}^i$};
\end{tikzpicture}
=
\displaystyle
\frac{\sqrt{7+3\sqrt{5}}}{2+\sqrt{5}}\jw{2}$,
so $\Tr(e_{3,+}^i)=\sqrt{7+3\sqrt{5}}$.
\end{itemize}
Let $\cP_\bullet=\cP\cA(T)$ be the planar algebra generated by $T$ inside $\mathcal{G}_\bullet$.
\end{defn}

\begin{remark}
Taking $T=-T(\lambda)$ switches $e_{3,+}^1$ with $e_{3,+}^2$.
\end{remark}

Let $e_{3,-}^i=\cF^3(e_{3,+}^i)$ for $i=1,2$.
Recall that $\cF(T)$ must be a multiple of $\check{T}=e_{3,-}^1-e_{3,-}^2$.

\begin{lem}
In fact, $\check{T}=\cF(T)$.
\end{lem}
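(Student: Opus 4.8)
The plan is to reduce the statement to linearity of the rotation together with the eigenvalue already computed in Lemma~\ref{lem:Eigenvalue}; in particular it needs no multiplicativity of $\cF$ and no further computation in $\cG_\bullet$. First I would unwind the definitions. Since $\cF^3\colon \cG_{3,+}\to\cG_{3,-}$ is linear, and since
$$
e_{3,+}^1 - e_{3,+}^2 = \tfrac{1}{2}(\jw3+T) - \tfrac{1}{2}(\jw3-T) = T,
$$
we get
$$
\check T = e_{3,-}^1 - e_{3,-}^2 = \cF^3(e_{3,+}^1) - \cF^3(e_{3,+}^2) = \cF^3(e_{3,+}^1 - e_{3,+}^2) = \cF^3(T).
$$
Here the Jones--Wenzl contributions cancel \emph{before} any rotation is applied, so this step does not even need the (standard) rotational invariance of $\jw3$.

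Next I would invoke Lemma~\ref{lem:Eigenvalue}: $T$ is a rotational eigenvector with $\omega_T = 1$, i.e.\ the order-three rotation $\cF^2$ of $\cG_{3,+}$ fixes $T$, so $\cF^2(T) = T$. Applying the one-click rotation $\cF$ to this identity gives $\cF^3(T) = \cF(\cF^2(T)) = \cF(T)$, and combining with the previous display yields $\check T = \cF^3(T) = \cF(T)$, as claimed. Equivalently, if one starts from the ``recall'' that $\cF(T) = c\,\check T$ for a scalar $c$ --- valid because $\cF(T)$ and $\check T$ are both self-adjoint uncappable rotational eigenvectors of $\cG_{3,-}$ --- the same two identities force $c=1$, using that $\cF(T)\neq 0$ since $\cF(T)^2=\jw3$ by Lemma~\ref{lem:ParameterFamilies}.

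I do not expect a genuine obstacle here: the one thing to be careful about is conventional bookkeeping, namely that on a $3$-box the half-turn $\cF^3$ factors as $\cF\circ\cF^2$ with $\cF^2$ the standard (order-three) rotation of $\cG_{3,+}$, so that ``rotational eigenvalue $1$'' is precisely the equality $\cF^2(T)=T$. Granting that convention, the lemma is a one-line consequence of the definition of $e_{3,-}^i$ together with Lemmas~\ref{lem:Eigenvalue} and~\ref{lem:ParameterFamilies}.
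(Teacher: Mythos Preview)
Your argument is correct. By the definition $e_{3,-}^i=\cF^3(e_{3,+}^i)$ and linearity of $\cF^3$ you get $\check T=\cF^3(T)$, and the eigenvalue $\omega_T=1$ (that is, $\cF^2(T)=T$) from Lemmas~\ref{lem:Eigenvalue} and~\ref{lem:ParameterFamilies} then gives $\cF^3(T)=\cF(\cF^2 T)=\cF(T)$ with no further input.

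This is a genuinely different route from the paper. As Remark~\ref{rem:ProofsOmitted} states, the paper treats every lemma in these sections, including this one, as a direct computation in $\cG_\bullet$ carried out in the accompanying {\tt Mathematica} notebook. Your observation shows that this particular lemma needs no new computation: the sentence preceding it, ``$\cF(T)$ must be a multiple of $\check T$'', already amounts to $\cF(T)=\omega_T^{-1}\check T$, so the content of the lemma is exactly the previously established $\omega_T=1$. The paper's uniform computational approach buys consistency (every claim is machine-verified in the same way), while your argument is the cleaner, computation-free proof for this one statement.
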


\begin{remark}
Any subfactor with principal graphs as in Theorem \ref{thm:2D2} must have $\check{T}=\cF(T)$ by \cite[Proposition 3.15]{1307.5890}.
\end{remark}

\section{The 4-box spaces}\label{sec:4boxes}

\begin{nota}
For $X,Y\in \cP_{n,\pm}$, let
$
X\circ Y = 
\begin{tikzpicture}[baseline = -.1cm]
	\draw (0,-.9)--(0,.9);
	\node at (-.15,.8) {{\scriptsize{$n$}}};
	\node at (-.15,-.8) {{\scriptsize{$n$}}};
	\node at (-.35,0) {{\scriptsize{$n-1$}}};
	\draw (.15,.15) arc (-180:0:.1cm) -- (.35,.9);
	\draw (.15,-.15) arc (180:0:.1cm) -- (.35,-.9);
	\nbox{unshaded}{(0,.4)}{.25}{0}{0}{$Y$};
	\nbox{unshaded}{(0,-.4)}{.25}{0}{0}{$X$};
\end{tikzpicture}
$\, and 
$X\star Y = \cF(\cF(X)\circ \cF(Y))$.
\end{nota}

\begin{defn}\label{defn:4boxes}
For $i=1,2$, define the following elements of $\cG_\bullet$ in terms of $T=+T(\lambda)$:
\begin{align*}
e_{4,+}^{i,i} &= 
\begin{tikzpicture}[baseline = -.1cm]
	\draw (0,-.6)--(0,.6);
	\draw (.5,-.6)--(.5,.6);
	\node at (-.15,.5) {{\scriptsize{$3$}}};
	\node at (-.15,-.5) {{\scriptsize{$3$}}};
	\nbox{unshaded}{(0,0)}{.35}{0}{0}{$e_{3,+}^{i}$};
\end{tikzpicture}
-
\left(\frac{2+\sqrt{5}}{\sqrt{7+3\sqrt{5}}}\right)
e_{3,+}^i\circ e_{3,+}^i.
\end{align*}
Note that these definitions immediately imply
\begin{itemize}
\item the $e^{i,i}_{4,+}$ are rectangularly uncappable projections, and
\item the partial trace 
$
\begin{tikzpicture}[baseline = -.1cm]
	\draw (0,-.6)--(0,.6);
	\draw (.25,.35) arc (180:0:.1cm) -- (.45,-.35) arc (0:-180:.1cm);
	\node at (-.15,.5) {{\scriptsize{$3$}}};
	\node at (-.15,-.5) {{\scriptsize{$3$}}};
	\nbox{unshaded}{(0,0)}{.35}{0}{0}{$e_{4,+}^{i,i}$};
\end{tikzpicture}
=
\displaystyle
\left(\frac{2+\sqrt{5}}{\sqrt{7+3\sqrt{5}}}\right) 
e_{3,+}^i$,
so $\Tr(e_{4,+}^{i,i})=2+\sqrt{5}$.
\end{itemize}
Since $2D2$ has annular multiplicities $*12$, we will find two uncappable rotational eigenvectors at depth $4$.
Now since $T^2=\jw{3}$ and $\check{T}^2=\check{f}^{(3)}$, we have that $\Tr(T^3)=\Tr(\check{T}^3)=0$, and thus $T\circ T$ and $T\star T$ are both perpendicular to the annular consequences of $T$.
Hence we use linear combinations of $T\circ T, T\star T,\jw{4},\cF(\check{f}^{(4)})$ to define
\begin{align*}
B &= 
\left(\frac{3}{4} \sqrt{3 + \sqrt{5}}\right)
T\circ T
-
\jw{4}
\\
A &=
i\left(
\left(\frac{\sqrt{2 + \sqrt{5}}}{2}\right) 
T\star T
-
\left(\frac{\sqrt{1 + \sqrt{5}}}{3}\right) 
\cF(\check{f}^{(4)})
-
\left(\frac{1}{3} \sqrt{-2 + \sqrt{5}}\right)
B
\right).
\end{align*}
We then use $A$ to define $e_{4,+}^{1,2} = e_{4,+}^{1,1} A e_{4,+}^{2,2}$ and $e_{4,+}^{2,1}=(e_{4,+}^{1,2})^*=e_{4,+}^{2,2}A^*e_{4,+}^{1,1}$.
Note that $e_{4,+}^{1,2},e_{4,+}^{2,1}$ are uncappable by definition.
\end{defn}

\begin{lem}\label{lem:3ZMod4Generators}
The elements $A$ and $B$ are self-adjoint uncappable rotational eigenvectors which have rotational eigenvalues $\omega_{A}=-1$ and $\omega_{B}=1$.
\end{lem}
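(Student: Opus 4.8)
The plan is to verify, for each of $A$ and $B$, the three assertions of Lemma~\ref{lem:3ZMod4Generators} --- self-adjointness, uncappability, and being a rotational eigenvector with the stated eigenvalue --- by a direct computation in $\cG_\bullet$ using the explicit matrix for $T=T(\lambda)$ from Notation~\ref{nota:Generator}, as for every lemma in these sections (Remark~\ref{rem:ProofsOmitted}). Below I record how each assertion reduces to properties of the building blocks $T$, $\check T=\cF(T)$, $\jw4$ and $\check f^{(4)}$, so that only a bounded check in $\cG_\bullet$ remains.

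\emph{Self-adjointness.} Since $T=T^*$ by Lemma~\ref{lem:ParameterFamilies} and $(X\circ Y)^*=Y^*\circ X^*$, the element $T\circ T$ is self-adjoint; as $\jw4$ is self-adjoint and the coefficient $\tfrac34\sqrt{3+\sqrt5}$ is real, $B=B^*$. For $A$ the point is that $\check T=\cF(T)$ is self-adjoint (it equals $e_{3,-}^1-e_{3,-}^2$), so, using the standard compatibility of $*$ with $\cF$, the combination $\tfrac{\sqrt{2+\sqrt5}}{2}\,T\star T-\tfrac{\sqrt{1+\sqrt5}}{3}\,\cF(\check f^{(4)})-\tfrac13\sqrt{-2+\sqrt5}\,B$ turns out to be \emph{anti}-self-adjoint precisely for these coefficients, whence multiplying by $i$ produces a self-adjoint $A$. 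Concretely one compares the matrices of $A$ and $B$ with those of their adjoints.

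\emph{Uncappability.} By Lemma~\ref{lem:ParameterFamilies} $T$ is uncappable, and the Jones--Wenzl idempotents $\jw4$, $\check f^{(4)}$ are annihilated by every cap, hence so is $\cF(\check f^{(4)})$. When a cap is applied to $T\circ T$ or $T\star T$, any cap meeting only one copy of $T$ vanishes by uncappability of $T$, while the remaining caps reduce, via $T^2=\jw3$ and $\check T^2=\check f^{(3)}$ (Definition~\ref{defn:4boxes}) together with uncappability of $\jw3,\check f^{(3)}$, to Temperley--Lieb elements; the coefficients in the definitions of $A$ and $B$ are chosen exactly so that these Temperley--Lieb contributions cancel. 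There are finitely many cap tangles at depth $4$, and one checks in $\cG_\bullet$ that each sends $A$ and $B$ to $0$.

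\emph{Rotational eigenvalue, and the main obstacle.} One expects $\rho(\jw4)=\jw4$ and, since $\omega_T=1$, that $T\circ T$ is a rotational eigenvector of eigenvalue $1$, whence $\rho(B)=B$ and $\omega_B=1$; this I would confirm by applying the rotation tangle. The delicate case is $A$: the elements $T\star T$ and $\cF(\check f^{(4)})$ are not $\rho$-eigenvectors but split into $(\pm1)$-eigencomponents, the uncappable $(+1)$-component being spanned by $B$ (together with Temperley--Lieb), and the coefficients $\tfrac{\sqrt{2+\sqrt5}}{2}$, $\tfrac{\sqrt{1+\sqrt5}}{3}$, $\tfrac13\sqrt{-2+\sqrt5}$ are precisely those that cancel the $(+1)$-eigencomponents, leaving a pure $(-1)$-eigenvector, so $\omega_A=-1$. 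The hard part is exactly this last step: determining how $\rho$ --- equivalently $\cF$ --- acts on $T\star T$ and on $\cF(\check f^{(4)})$ is not forced by the formal properties of the building blocks and requires the explicit matrix arithmetic of Notation~\ref{nota:Generator}, carried out (independently of $\lambda$) in the {\tt G2D2} notebook per Remark~\ref{rem:ProofsOmitted}.
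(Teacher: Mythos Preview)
Your proposal is correct and follows exactly the paper's approach: as Remark~\ref{rem:ProofsOmitted} states, the paper's proof of this lemma is nothing more than the direct matrix computation in $\cG_\bullet$ carried out in the {\tt G2D2} notebook, and you correctly defer the verification to that computation.

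Your added conceptual remarks are a nice bonus, but one of your heuristics is not quite right as stated: the claim that $T\circ T$ is a rotational eigenvector with eigenvalue $1$ does \emph{not} follow formally from $\omega_T=1$. The two-click rotation $\cF^2$ on $\cP_{4,+}$ does not simply act on $T\circ T$ by rotating each copy of $T$; rather, it rearranges the way the two copies are wired together, and there is no a~priori reason the result should again be $T\circ T$. (It is true here, as one sees a~posteriori from $\omega_B=1$ together with $\rho(\jw4)=\jw4$, but that is the content of the lemma, not an input.) Since you explicitly say you would confirm this by applying the rotation tangle, this does not damage the proof --- but you should not present it as a consequence of $\omega_T=1$.
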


\begin{lem}\label{lem:3ZMod4}
The elements $A$ and $B$ have the moments and tetrahedral structure constants as listed in \cite[Appendix B.3]{1308.5197}, and $\{A,B,f^{(4)}\}$ and $\{\check{A},\check{B},\check{f}^{(4)}\}$ span algebras under the usual multiplication.
Hence by \cite{1308.5197}, $A$ and $B$ generate Izumi's $3^{\bbZ/4\bbZ}$ subfactor planar algebra.
\end{lem}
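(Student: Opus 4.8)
The plan is to verify both assertions by a direct computation in the graph planar algebra $\cG_\bullet$ of 2D2, exactly in the spirit of Remark \ref{rem:ProofsOmitted}, and then to invoke the identification theorem of \cite{1308.5197}. Every object involved is a concrete element of $\cG_\bullet$: $A$ and $B$ are given by the explicit formulas of Definition \ref{defn:4boxes} in terms of the generator $T = +T(\lambda)$ written out in Notation \ref{nota:Generator}, so all of the traces and products below are finite matrix computations carried out with {\tt FusionAtlas}.

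First I would compute the moments $\Tr(A^k)$ and $\Tr(B^k)$, the mixed moments $\Tr(A^{i_1}B^{j_1}A^{i_2}\cdots)$ up to the relevant length, and the tetrahedral networks built from $A$, $B$, and $\jw{4}$, and check that each equals the corresponding number tabulated in \cite[Appendix B.3]{1308.5197}. By the remark following Lemma \ref{lem:ParameterFamilies} every such invariant must be independent of $\lambda\in\bbT$, so confirming that the $\lambda$'s cancel is part of this check; in fact the coefficients appearing in Definition \ref{defn:4boxes} were reverse-engineered precisely so that these invariants come out in the normalization used in \cite{1308.5197}.

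Next I would establish the algebra closure. Since $A$ and $B$ are uncappable (Lemma \ref{lem:3ZMod4Generators}), one has $\jw{4}A = A\jw{4} = A$ and $\jw{4}B = B\jw{4} = B$, so $\jw{4}$ acts as a unit on $\spann\{A,B,\jw{4}\}$; it then suffices to compute the four products $A^2$, $AB$, $BA$, $B^2$ and to exhibit each as an explicit linear combination of $A$, $B$, $\jw{4}$. Because $A$, $B$, $\jw{4}$ are linearly independent, this simultaneously proves that $\spann\{A,B,\jw{4}\}$ is closed under the usual multiplication and records its multiplication table. Applying $\cF$ throughout — equivalently, rerunning the computation with $\check{T}=\cF(T)$ in place of $T$ — gives the same conclusion for $\{\check{A},\check{B},\check{f}^{(4)}\}$. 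With the moments, the tetrahedral constants, and the two multiplication tables in hand, the classification result of \cite{1308.5197} applies and identifies the planar subalgebra of $\cP_\bullet$ generated by $A$ and $B$ with Izumi's $3^{\bbZ/4\bbZ}$ subfactor planar algebra.

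The obstacle is execution rather than strategy: the matrices representing $A$, $B$, and their products in $\cG_\bullet$ are large, and one must match each of the many structure constants against \cite[Appendix B.3]{1308.5197}, while also being sure that the data listed there really do form a complete set of invariants — that is, that they determine the $3^{\bbZ/4\bbZ}$ planar algebra up to isomorphism, which is precisely the content of the cited theorem.
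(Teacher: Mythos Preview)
Your proposal is correct and matches the paper's approach exactly: as stated in Remark~\ref{rem:ProofsOmitted}, the paper treats this lemma (like every lemma in Sections~\ref{sec:3boxes}--\ref{sec:6boxes}) as a direct computation in $\cG_\bullet$ carried out in the {\tt Mathematica} notebook {\tt G2D2.nb}, verifying the moments, tetrahedral constants, and algebra closure and then invoking \cite{1308.5197}. The only thing to add is that the paper does not spell out the steps as you have; your description is a faithful expansion of what that computation entails.
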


\begin{lem}\label{lem:S4Plus}
The set $\cS_{4,+}=\{e_{4,+}^{i,j}\}$ forms a system of matrix units for a copy of $M_2(\bbC)$.
\end{lem}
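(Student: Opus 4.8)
The plan is to verify directly that the four elements $e_{4,+}^{1,1},e_{4,+}^{1,2},e_{4,+}^{2,1},e_{4,+}^{2,2}$ satisfy the defining relations of a system of matrix units for $M_2(\bbC)$, namely that each $e_{4,+}^{i,j}$ is nonzero, that $(e_{4,+}^{i,j})^* = e_{4,+}^{j,i}$, that $e_{4,+}^{i,j}e_{4,+}^{k,\ell} = \delta_{j,k}\,e_{4,+}^{i,\ell}$, and that $e_{4,+}^{1,1}+e_{4,+}^{2,2}$ is the relevant minimal central idempotent in $\cP_{4,+}$ corresponding to the depth-4 vertex. The adjoint relations are immediate from Definition \ref{defn:4boxes}: $e_{4,+}^{i,i}$ is visibly self-adjoint (it is a real combination of self-adjoint elements), and $e_{4,+}^{2,1}=(e_{4,+}^{1,2})^*$ by fiat. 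The diagonal idempotent relations $e_{4,+}^{i,i}e_{4,+}^{j,j}=\delta_{i,j}e_{4,+}^{i,i}$ follow because, as noted in Definition \ref{defn:4boxes}, the $e_{4,+}^{i,i}$ are projections, and they are orthogonal since they are built from the orthogonal projections $e_{3,+}^1,e_{3,+}^2$ (which are orthogonal as $e_{3,+}^1 e_{3,+}^2 = \tfrac14(\jw3+T)(\jw3-T)=\tfrac14(\jw3 - T^2)=0$ using $T^2=\jw3$), and the correction term $e_{3,+}^i\circ e_{3,+}^i$ is the Jones-Wenzl-type projection onto the ``through'' part, so the two $4$-box projections sit in orthogonal corners.

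Next I would establish the off-diagonal multiplication rules. By construction $e_{4,+}^{1,2}=e_{4,+}^{1,1}Ae_{4,+}^{2,2}$ and $e_{4,+}^{2,1}=e_{4,+}^{2,2}A^*e_{4,+}^{1,1}$, so automatically $e_{4,+}^{i,i}e_{4,+}^{i,j}=e_{4,+}^{i,j}=e_{4,+}^{i,j}e_{4,+}^{j,j}$ and the ``wrong-corner'' products vanish; what genuinely needs checking is the single normalization identity $e_{4,+}^{1,2}e_{4,+}^{2,1}=e_{4,+}^{1,1}$ (equivalently $e_{4,+}^{1,1}Ae_{4,+}^{2,2}A^*e_{4,+}^{1,1}=e_{4,+}^{1,1}$), from which $e_{4,+}^{2,1}e_{4,+}^{1,2}=e_{4,+}^{2,2}$ and all remaining relations follow by applying $*$ and multiplying by the diagonal idempotents. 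This is a concrete computation in $\cG_\bullet$: expand $A$ via Definition \ref{defn:4boxes} in terms of $T\star T$, $\cF(\check f^{(4)})$, and $B$, use the moment and tetrahedral data of Lemma \ref{lem:3ZMod4} together with $T^2=\jw3$ and $\check T^2=\check f^{(3)}$ to evaluate $e_{4,+}^{1,1}Ae_{4,+}^{2,2}A^*e_{4,+}^{1,1}$, and confirm the coefficient is exactly $1$ (the scalars in the definition of $A$ were chosen precisely so that this holds). As with the other lemmas in these sections, per Remark \ref{rem:ProofsOmitted} this is carried out in the {\tt G2D2} notebook, and the outcome is independent of $\lambda$ by Lemma \ref{lem:ParameterFamilies}.

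Finally I would record that $\cS_{4,+}$ spans a full matrix algebra over its diagonal: since the $e_{4,+}^{i,j}$ are genuinely nonzero (their traces $\Tr(e_{4,+}^{i,i})=2+\sqrt5\neq 0$ are computed in Definition \ref{defn:4boxes}, and $e_{4,+}^{1,2}e_{4,+}^{2,1}=e_{4,+}^{1,1}\neq 0$ forces $e_{4,+}^{1,2}\neq 0$), the matrix-unit relations force $\mathrm{span}\{e_{4,+}^{i,j}\}\cong M_2(\bbC)$. The main obstacle is the off-diagonal normalization $e_{4,+}^{1,2}e_{4,+}^{2,1}=e_{4,+}^{1,1}$: it is the one place where the precise algebra structure constants from Lemma \ref{lem:3ZMod4} (rather than formal nonsense) enter, and verifying it amounts to the matrix computation in the notebook; everything else is bookkeeping with idempotents and adjoints.
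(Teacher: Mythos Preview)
Your proposal is correct and follows essentially the same approach as the paper: per Remark \ref{rem:ProofsOmitted}, the paper treats this lemma as a direct computation in $\cG_\bullet$ carried out in the {\tt G2D2} notebook, and your write-up is a faithful (and more informative) description of that computation, correctly isolating the one nontrivial verification $e_{4,+}^{1,2}e_{4,+}^{2,1}=e_{4,+}^{1,1}$ while noting the remaining relations follow formally from the definitions.
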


\begin{remark}
Taking $T=-T(\lambda)$ also switches $e_{4,+}^{1,1}$ with $e_{4,+}^{2,2}$ and $e_{4,+}^{1,2}$ with $e_{4,+}^{2,1}$ since $A$ is self-adjoint by Lemma \ref{lem:3ZMod4Generators}.
\end{remark}

\subsection{The dual graph}

\begin{defn}\label{defn:gi}
For $i=1,2$, define the following elements of $\cG_{4,-}$:
\begin{align*}
g_i &= 
\begin{tikzpicture}[baseline = -.1cm]
	\draw (0,-.6)--(0,.6);
	\draw (.5,-.6)--(.5,.6);
	\node at (-.15,.5) {{\scriptsize{$3$}}};
	\node at (-.15,-.5) {{\scriptsize{$3$}}};
	\nbox{unshaded}{(0,0)}{.35}{0}{0}{$e_{3,-}^{i}$};
\end{tikzpicture}
-
\left(\frac{2+\sqrt{5}}{\sqrt{7+3\sqrt{5}}}\right)
e_{3,-}^i\circ e_{3,-}^i.
\end{align*}
Again, it is clear that
\begin{itemize}
\item the $g_i$ are rectangularly uncappable projections,
\item the partial trace 
$
\begin{tikzpicture}[baseline = -.1cm]
	\draw (0,-.6)--(0,.6);
	\draw (.25,.35) arc (180:0:.1cm) -- (.45,-.35) arc (0:-180:.1cm);
	\node at (-.15,.5) {{\scriptsize{$3$}}};
	\node at (-.15,-.5) {{\scriptsize{$3$}}};
	\nbox{unshaded}{(0,0)}{.35}{0}{0}{$g_i$};
\end{tikzpicture}
=
\displaystyle
\left(\frac{2+\sqrt{5}}{\sqrt{7+3\sqrt{5}}}\right) e_{3,-}^{i}$,
so $\Tr(g_i)=2+\sqrt{5}$.
\end{itemize}
Now we define the following elements of $\cG_{4,-}$:
\begin{align*}
e_{4,-}^1=\frac{1}{2} (1-\sqrt{5}) g_1- \left(\sqrt{1+\sqrt{5}}\right)g_1 \check{A} g_1 
\\
e_{4,-}^2=\frac{1}{2} (1+\sqrt{5}) g_1 + \left(\sqrt{1+\sqrt{5}}\right)g_1 \check{A} g_1 
\\
e_{4,-}^3=\frac{1}{2} (1+\sqrt{5}) g_2 + \left(\sqrt{1+\sqrt{5}}\right)g_2 \check{A} g_2 
\\
e_{4,-}^4=\frac{1}{2} (1-\sqrt{5}) g_2 - \left(\sqrt{1-\sqrt{5}}\right)g_2 \check{A} g_2 
\end{align*}
where $\check{A}:=-i\cF(A)$ (as in \cite{1308.5197}, due to Lemma \ref{lem:3ZMod4}).
Note $e_{4,-}^1+e_{4,-}^2=g_1$ and $e_{4,-}^3+e_{4,-}^4=g_2$.
\end{defn}

\begin{lem}\label{lem:S4Minus}
Let $\cS_{4,-}=\{e_{4,-}^i\}$.
We have:
\begin{itemize}
\item 
The elements of $\cS_{4,-}$ are rectangularly uncappable projections.
\item 
$\Tr(e_{4,-}^1)=\Tr(e_{4,-}^4)=\frac{1}{2}(1+\sqrt{5})$ and $\Tr(e_{4,-}^2)=\Tr(e_{4,-}^3)=\frac{1}{2}(3+\sqrt{5})$.
\item
$\cF^4(e_{4,-}^1)=e_{4,-}^1$, $\cF^4(e_{4,-}^2)=e_{4,-}^3$, and $\cF^4(e_{4,-}^4)=e_{4,-}^4$.
\end{itemize}
\end{lem}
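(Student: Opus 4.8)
Following the template of the rest of this section (Remark~\ref{rem:ProofsOmitted}), all four bullets reduce to explicit manipulations in $\cG_\bullet$ using structure constants that are already assembled, and these are exactly what the notebook {\tt G2D2} records; here is how I would organize them. The rectangular uncappability is the easy part: each $e_{4,-}^i$ is a real-linear combination of $g_i$ and the product $g_i\check{A}g_i$ formed in $\cP_{4,-}$, and since $g_i$ is rectangularly uncappable (Definition~\ref{defn:gi}) and $\check{A}=-i\cF(A)$ is uncappable (as $A$ is, by Lemma~\ref{lem:3ZMod4Generators}), a cap applied to $g_i\check{A}g_i$ on the top or bottom factors through a cap on $g_i$ and so vanishes. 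Hence every summand, and therefore each $e_{4,-}^i$, is rectangularly uncappable.

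For the projection property I would use that $e_{4,-}^1+e_{4,-}^2=g_1$ and $e_{4,-}^3+e_{4,-}^4=g_2$ (Definition~\ref{defn:gi}) are already projections, together with the fact that each corner algebra $g_i\cP_{4,-}g_i$ is two-dimensional. Granting the latter, $\{g_i,\,g_i\check{A}g_i\}$ spans $g_i\cP_{4,-}g_i$, which is then an abelian $C^*$-algebra with unit $g_i$ whose two minimal projections are precisely the roots of the idempotency relation $x^2=x$ inside it; the task is to check that the coefficients in Definition~\ref{defn:gi} realize exactly these two roots and yield self-adjoint elements --- equivalently, to verify $(e_{4,-}^i)^2=e_{4,-}^i=(e_{4,-}^i)^*$ directly from the quadratic relation expressing $(g_i\check{A}g_i)^2$ in terms of $g_i$ and $g_i\check{A}g_i$. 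Two-dimensionality of $g_i\cP_{4,-}g_i$ is the dual-side analogue of the matrix-unit picture for $\cS_{4,+}$ in Lemma~\ref{lem:S4Plus}, and in practice is read off from the moments and tetrahedral structure constants of Lemma~\ref{lem:3ZMod4}. The traces then follow by linearity from $\Tr(g_i)=2+\sqrt5$ (Definition~\ref{defn:gi}) and the value of $\Tr(\check{A}g_i)=\Tr(g_i\check{A}g_i)$, extracted from the moments of $A$ in Lemma~\ref{lem:3ZMod4}; as a sanity check $\tfrac12(1+\sqrt5)+\tfrac12(3+\sqrt5)=2+\sqrt5=\Tr(g_1)$, and likewise on the $g_2$ block.

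Finally, for the $\cF^4$-action the plan is to compute $\cF^4(g_i)$ and $\cF^4(g_i\check{A}g_i)$ directly --- using $e_{3,-}^i=\cF^3(e_{3,+}^i)$, the definition $\check{A}=-i\cF(A)$, the compatibility of $\cF$ with the products $\circ$ and $\star$, and the rotational eigenvalue $\omega_A=-1$ of Lemma~\ref{lem:3ZMod4Generators} --- and then to substitute into the defining linear combinations and read off $\cF^4(e_{4,-}^1)=e_{4,-}^1$, $\cF^4(e_{4,-}^2)=e_{4,-}^3$, $\cF^4(e_{4,-}^4)=e_{4,-}^4$. This last step is the one genuinely non-routine point: one must check that $\cF^4$ \emph{fixes} $e_{4,-}^1$ and $e_{4,-}^4$ while merely interchanging $e_{4,-}^2$ and $e_{4,-}^3$, rather than interchanging the two blocks $g_1,g_2$ outright --- indeed $\cF^4(g_1)=\cF^4(e_{4,-}^1+e_{4,-}^2)=e_{4,-}^1+e_{4,-}^3$ is neither $g_1$ nor $g_2$, so $\cF^4$ really does mix the blocks, and the permutation has to be pinned down by carefully tracking how $\cF^4$ interacts with $\check{A}$ and with the sandwiched products. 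The one conceptual ingredient I would borrow rather than re-derive is the two-dimensionality of the corners $g_i\cP_{4,-}g_i$; with that granted (via Lemma~\ref{lem:3ZMod4}), everything else is bookkeeping with structure constants already in hand, and it also makes transparent why the traces must be equal in the two pairs swapped or fixed by $\cF^4$.
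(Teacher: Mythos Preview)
Your proposal is essentially correct and, in fact, more detailed than the paper's own treatment: as Remark~\ref{rem:ProofsOmitted} states, every lemma in Sections~\ref{sec:3boxes}--\ref{sec:6boxes} is verified by direct computation in $\cG_\bullet$ via the {\tt Mathematica} notebook {\tt G2D2.nb}, and no further argument is given in the text. So at the level of ``approach'' there is nothing to compare --- both you and the paper ultimately defer to explicit calculation in the graph planar algebra.

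That said, your conceptual organization is a genuine value-add over the paper's bare deferral. The rectangular-uncappability argument is clean and correct. Your treatment of the projection property via two-dimensionality of the corners $g_i\cP_{4,-}g_i$ is the right idea, though you correctly flag that this two-dimensionality itself must be extracted from the structure constants of Lemma~\ref{lem:3ZMod4} rather than being available a priori. You are also right to single out the $\cF^4$ computation as the delicate point: since $\cF^4(g_1)=e_{4,-}^1+e_{4,-}^3$ genuinely mixes the $g_1$- and $g_2$-blocks, your plan to compute $\cF^4(g_i)$ and $\cF^4(g_i\check{A}g_i)$ cannot land back in the span of $\{g_i,\,g_i\check{A}g_i\}$ for a single $i$, so one must track the full four-dimensional span $\{g_1,g_2,g_1\check{A}g_1,g_2\check{A}g_2\}$ simultaneously. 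You acknowledge this, so there is no gap --- just be aware that in practice this step is exactly the kind of thing the paper outsources to the notebook rather than attempting by hand.
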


\subsection{Rotations}

We now calculate $\cF^4$ on $\cP_{4,+}$ and $\cF:\cP_{4,\pm}\to \cP_{4,\mp}$.
It turns out that there is a nice formula for $\cF^4$ on $\cP_{4,+}$ that is not immediately obvious from the formulas for $\cF: \cP_{4,\pm}\to \cP_{4,\mp}$.

\begin{lem}\label{lem:4ClickRotation}
Let 
$\alpha = \frac{1}{2}(3-\sqrt{5})$,
$\beta = \frac{1}{2}(-1+\sqrt{5})$, and
$\eta = i \sqrt{-2 + \sqrt{5}}$.
The rotation by $\pi$ on $\cP_{4,+}$, $\cF^4$, is given by
$$
\begin{pmatrix}
\overline{e_{4,+}^{1,1}}\\
\overline{e_{4,+}^{1,2}}\\
\overline{e_{4,+}^{2,1}}\\
\overline{e_{4,+}^{2,2}}
\end{pmatrix}
=
\cF^4
\begin{pmatrix}
e_{4,+}^{1,1}\\
e_{4,+}^{1,2}\\
e_{4,+}^{2,1}\\
e_{4,+}^{2,2}
\end{pmatrix}
=
\begin{pmatrix}
\alpha & \overline{\eta} & \eta & \beta\\
\eta & \beta & \alpha &\overline{\eta}\\
\overline{\eta} & \alpha & \beta & \eta\\
\beta & \eta & \overline{\eta} & \alpha
\end{pmatrix}
\begin{pmatrix}
e_{4,+}^{1,1}\\
e_{4,+}^{1,2}\\
e_{4,+}^{2,1}\\
e_{4,+}^{2,2}
\end{pmatrix}.
$$
\end{lem}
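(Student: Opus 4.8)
\noindent As Remark~\ref{rem:ProofsOmitted} indicates, the statement is ultimately a direct computation in the graph planar algebra $\cG_\bullet$; the plan is just to arrange it so that the answer is the displayed matrix. The one-click rotation $\cF$ is a fixed planar tangle, hence induces explicit $\bbC$-linear isomorphisms $\cF\colon\cG_{4,\pm}\to\cG_{4,\mp}$ of the finite-dimensional spaces of functions on length-$8$ loops in $2D2$, and $\cF^4=\cF\circ\cF\circ\cF\circ\cF$ is an explicit linear endomorphism of $\cG_{4,+}$. Since $T=T(\lambda)$ (Notation~\ref{nota:Generator}), the projections $e_{3,+}^i$, the matrix units $e_{4,+}^{i,i}$, the generators $A,B$, and $e_{4,+}^{1,2}=e_{4,+}^{1,1}Ae_{4,+}^{2,2}$, $e_{4,+}^{2,1}=(e_{4,+}^{1,2})^{*}$ are all given by explicit formulas, each $\cF^4(e_{4,+}^{i,j})$ is a concrete vector in $\cG_{4,+}$ which one evaluates in {\tt FusionAtlas}; by Remark~\ref{rem:ProofsOmitted} the outcome is independent of $\lambda$, so one fixes a convenient value.

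The one genuinely structural point is that $V:=\spann\{e_{4,+}^{i,j}\}$ is $\cF^4$-invariant, so that the lemma's assertion --- that $\cF^4|_V$ is described by a $4\times4$ matrix --- is well posed. Since $\cF^4$ is the half-turn, it interchanges the two cabled $4$-strand half-intervals of the boundary, and hence sends rectangularly uncappable $4$-boxes to rectangularly uncappable $4$-boxes; in particular the rectangularly uncappable subspace of $\cP_{4,+}$ is $\cF^4$-stable, and each $e_{4,+}^{i,j}$ lies in it. That subspace is strictly larger than $V$ --- it also contains $\jw4$, which is not in $V$ since $\Tr(\jw4)=6+3\sqrt5$ is not the trace of any projection in the copy of $M_2(\bbC)$ of Lemma~\ref{lem:S4Plus} --- so the containment $\cF^4(V)\subseteq V$ is a genuine (if modest) output of the computation. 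Granting it, one solves the four linear systems expressing $\cF^4(e_{4,+}^{i,j})$ in the basis $\{e_{4,+}^{i,j}\}$, which gives the rows of the matrix, with $\alpha=\tfrac12(3-\sqrt5)$, $\beta=\tfrac12(-1+\sqrt5)$, $\eta=i\sqrt{-2+\sqrt5}$.

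Finally I would corroborate the computed matrix against the general properties of $\cF^4$, both to catch arithmetic slips and because they constrain it heavily. From $\cF^8=\id$ on $\cP_{4,+}$ one must have $M^2=I$, which one checks by hand using $\alpha+\beta=1$ and $\alpha\beta=|\eta|^2=-\eta^2$; since $\cF^4$ commutes with the adjoint $*$, the entries of $M$ satisfy a conjugation symmetry swapping the $\eta$ and $\overline{\eta}$ entries, which the displayed matrix obeys; and since $\cF^4$ preserves $\Tr$, with $\Tr(e_{4,+}^{1,1})=\Tr(e_{4,+}^{2,2})=2+\sqrt5$ and $e_{4,+}^{1,2},e_{4,+}^{2,1}$ traceless, each row of $M$ must sum correctly when weighted by these traces (e.g.\ $\alpha+\beta=1$ in the first row). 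Consistently, $\cF^4$ fixes the rotational eigenvectors $\jw4$, $A$ and $B$, whose $\cF^2$-eigenvalues $1,-1,1$ (Lemma~\ref{lem:3ZMod4Generators}) all square to $1$. The genuine obstacle is not conceptual but computational: iterating $\cF$ four times across matrices indexed by length-$8$ loops in $2D2$ while keeping the spherical normalisation conventions straight is exactly the bookkeeping the {\tt G2D2} notebook is built to handle.
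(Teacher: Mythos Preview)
Your proposal is correct and matches the paper's approach: as Remark~\ref{rem:ProofsOmitted} states, the paper treats this lemma as a direct machine computation in $\cG_\bullet$ carried out in the {\tt G2D2} notebook, with no further argument given. Your write-up goes somewhat beyond this by isolating the one structural point (that $\cF^4$-invariance of $\spann\{e_{4,+}^{i,j}\}$ is a genuine output of the computation, not an a priori fact) and by recording consistency checks ($M^2=I$, compatibility with $*$ and with $\Tr$) that constrain and corroborate the answer; none of this appears in the paper, but it is all sound. One small caution: appealing to Remark~\ref{rem:ProofsOmitted} to justify fixing a value of $\lambda$ is mildly circular, since the $\lambda$-independence asserted there is itself part of what the notebook verifies; in practice the computation is carried out with $\lambda$ symbolic subject to $\lambda\overline{\lambda}=1$, and the independence is then observed.
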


\begin{defn}\label{defn:RotationBases}
Recall from \cite{MR2972458} that the annular basis of $\gA_{4,+}(T)$ of annular consequences of $T$ in $\cP_{4,+}$ is given by
$$
\underset{\cup_0(T)}{
\begin{tikzpicture}[baseline = 0cm]
	\clip (0,0) circle (.8cm);
	\draw[shaded] (0,0) circle (.8cm);
	\draw[unshaded] (90:.8) circle (.25cm);
	\draw[unshaded] (0,0)--(-10:.9cm) arc (-10:40:.9cm) --(0,0); 
	\draw[unshaded] (0,0)--(140:.9cm) arc (140:190:.9cm) --(0,0); 
	\draw[unshaded] (0,0)--(245:.9cm) arc (245:295:.9cm) --(0,0); 
	\draw[thick, unshaded] (0,0) circle (.27cm);
	\node at (90:.37) {\scriptsize{$\star$}};
	\node at (90:.70) {\scriptsize{$\star$}};
	\node at (0,0) {$\check{T}$};
	\draw[ultra thick] (0,0) circle (.8cm);
\end{tikzpicture}
}
\,,\,
\underset{\cup_1(T)}{
\begin{tikzpicture}[baseline = 0cm]
	\clip (0,0) circle (.8cm);
	\draw[unshaded] (0,0) circle (.8cm);
	\draw[shaded] (90:.8) circle (.25cm);
	\draw[shaded] (0,0)--(-10:.9cm) arc (-10:40:.9cm) --(0,0); 
	\draw[shaded] (0,0)--(140:.9cm) arc (140:190:.9cm) --(0,0); 
	\draw[shaded] (0,0)--(245:.9cm) arc (245:295:.9cm) --(0,0); 
	\draw[thick, unshaded] (0,0) circle (.27cm);
	\node at (90:.37) {\scriptsize{$\star$}};
	\node at (125:.70) {\scriptsize{$\star$}};
	\node at (0,0) {$T$};
	\draw[ultra thick] (0,0) circle (.8cm);
\end{tikzpicture}
}
\,,\,
\underset{\cup_2(T)}{
\begin{tikzpicture}[baseline = 0cm]
	\clip (0,0) circle (.8cm);
	\draw[shaded] (0,0) circle (.8cm);
	\draw[unshaded] (90:.8) circle (.25cm);
	\draw[unshaded] (0,0)--(-10:.9cm) arc (-10:40:.9cm) --(0,0); 
	\draw[unshaded] (0,0)--(140:.9cm) arc (140:190:.9cm) --(0,0); 
	\draw[unshaded] (0,0)--(245:.9cm) arc (245:295:.9cm) --(0,0); 
	\draw[thick, unshaded] (0,0) circle (.27cm);
	\node at (90:.37) {\scriptsize{$\star$}};
	\node at (165:.70) {\scriptsize{$\star$}};
	\node at (0,0) {$\check{T}$};
	\draw[ultra thick] (0,0) circle (.8cm);
\end{tikzpicture}
}
\,,\,
\underset{\cup_3(T)}{
\begin{tikzpicture}[baseline = 0cm]
	\clip (0,0) circle (.8cm);
	\draw[unshaded] (0,0) circle (.8cm);
	\draw[shaded] (90:.8) circle (.25cm);
	\draw[shaded] (0,0)--(-10:.9cm) arc (-10:40:.9cm) --(0,0); 
	\draw[shaded] (0,0)--(140:.9cm) arc (140:190:.9cm) --(0,0); 
	\draw[shaded] (0,0)--(245:.9cm) arc (245:295:.9cm) --(0,0); 
	\draw[thick, unshaded] (0,0) circle (.27cm);
	\node at (90:.37) {\scriptsize{$\star$}};
	\node at (215:.70) {\scriptsize{$\star$}};
	\node at (0,0) {$T$};
	\draw[ultra thick] (0,0) circle (.8cm);
\end{tikzpicture}
}
\,,\,
\underset{\cup_4(T)}{
\begin{tikzpicture}[baseline = 0cm]
	\clip (0,0) circle (.8cm);
	\draw[shaded] (0,0) circle (.8cm);
	\draw[unshaded] (90:.8) circle (.25cm);
	\draw[unshaded] (0,0)--(-10:.9cm) arc (-10:40:.9cm) --(0,0); 
	\draw[unshaded] (0,0)--(140:.9cm) arc (140:190:.9cm) --(0,0); 
	\draw[unshaded] (0,0)--(245:.9cm) arc (245:295:.9cm) --(0,0); 
	\draw[thick, unshaded] (0,0) circle (.27cm);
	\node at (90:.37) {\scriptsize{$\star$}};
	\node at (270:.70) {\scriptsize{$\star$}};
	\node at (0,0) {$\check{T}$};
	\draw[ultra thick] (0,0) circle (.8cm);
\end{tikzpicture}
}
\,,\,
\underset{\cup_5(T)}{
\begin{tikzpicture}[baseline = 0cm]
	\clip (0,0) circle (.8cm);
	\draw[unshaded] (0,0) circle (.8cm);
	\draw[shaded] (90:.8) circle (.25cm);
	\draw[shaded] (0,0)--(-10:.9cm) arc (-10:40:.9cm) --(0,0); 
	\draw[shaded] (0,0)--(140:.9cm) arc (140:190:.9cm) --(0,0); 
	\draw[shaded] (0,0)--(245:.9cm) arc (245:295:.9cm) --(0,0); 
	\draw[thick, unshaded] (0,0) circle (.27cm);
	\node at (90:.37) {\scriptsize{$\star$}};
	\node at (325:.70) {\scriptsize{$\star$}};
	\node at (0,0) {$T$};
	\draw[ultra thick] (0,0) circle (.8cm);
\end{tikzpicture}
}
\,,\,
\underset{\cup_6(T)}{
\begin{tikzpicture}[baseline = 0cm]
	\clip (0,0) circle (.8cm);
	\draw[shaded] (0,0) circle (.8cm);
	\draw[unshaded] (90:.8) circle (.25cm);
	\draw[unshaded] (0,0)--(-10:.9cm) arc (-10:40:.9cm) --(0,0); 
	\draw[unshaded] (0,0)--(140:.9cm) arc (140:190:.9cm) --(0,0); 
	\draw[unshaded] (0,0)--(245:.9cm) arc (245:295:.9cm) --(0,0); 
	\draw[thick, unshaded] (0,0) circle (.27cm);
	\node at (90:.37) {\scriptsize{$\star$}};
	\node at (15:.70) {\scriptsize{$\star$}};
	\node at (0,0) {$\check{T}$};
	\draw[ultra thick] (0,0) circle (.8cm);
\end{tikzpicture}
}
\,,\,
\underset{\cup_7(T)}{
\begin{tikzpicture}[baseline = 0cm]
	\clip (0,0) circle (.8cm);
	\draw[unshaded] (0,0) circle (.8cm);
	\draw[shaded] (90:.8) circle (.25cm);
	\draw[shaded] (0,0)--(-10:.9cm) arc (-10:40:.9cm) --(0,0); 
	\draw[shaded] (0,0)--(140:.9cm) arc (140:190:.9cm) --(0,0); 
	\draw[shaded] (0,0)--(245:.9cm) arc (245:295:.9cm) --(0,0); 
	\draw[thick, unshaded] (0,0) circle (.27cm);
	\node at (90:.37) {\scriptsize{$\star$}};
	\node at (55:.70) {\scriptsize{$\star$}};
	\node at (0,0) {$T$};
	\draw[ultra thick] (0,0) circle (.8cm);
\end{tikzpicture}
}
\,,
$$
and similarly for the dual annular consequences $\gA_{4,-}(T)$ spanned by the $\cup_i(\check{T})$'s.

We defined the ordered sets
\begin{align*}
\cB_{4,+} &= \{A,B,\cF(\check{f}^{(4)})\}\cup \set{\cup_i(T)}{0\leq i\leq 7}\\
\cB_{4,-} &= \{\check{A},\check{B},\cF(\jw{4})\}\cup \set{\cup_i(\check{T})}{0\leq i\leq 7}.
\end{align*}
\end{defn}

\begin{lem}\label{lem:1ClickOnS4}
The one click rotation $\cF$ applied to $\cS_{4,\pm}$ is given by the linear combinations of elements of $\cB_{4,\mp}$ given in Figure \ref{fig:1ClickRotationMatrices} in Appendix \ref{sec:1ClickRotation}.
\end{lem}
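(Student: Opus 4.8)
The plan is to prove this, like the other lemmas in Sections \ref{sec:3boxes}---\ref{sec:6boxes} (cf.\ Remark \ref{rem:ProofsOmitted}), by an explicit computation in the graph planar algebra $\cG_\bullet$ of 2D2.

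First I would check that the statement is well posed, i.e.\ that $\cF$ does carry $\cS_{4,+}$ into $\spann(\cB_{4,-})$ and $\cS_{4,-}$ into $\spann(\cB_{4,+})$. Write $\cP_{4,\pm}$ as the direct sum of its Temperley--Lieb part, the span of the annular consequences of $T$ (resp.\ $\check T$), and the depth-$4$ low-weight space, which is $2$-dimensional and spanned by $A,B$ (resp.\ $\check A,\check B$) because 2D2 has annular multiplicities $*12$. Each element of $\cS_{4,\pm}$ is rectangularly uncappable, so its Temperley--Lieb component is a scalar multiple of $\jw{4}$ (resp.\ $\check f^{(4)}$), the unique rectangularly uncappable Temperley--Lieb $4$-box up to scalars. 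Since $\cF$ is a rotational tangle it preserves this three-part decomposition, intertwines the annular action, sends $T$ to $\check T$ (the lemma after Definition \ref{defn:3boxes}), sends $\langle A,B\rangle$ to $\langle\check A,\check B\rangle$ (Lemma \ref{lem:3ZMod4Generators}), and sends the rectangularly uncappable Temperley--Lieb line to the Temperley--Lieb element $\cF(\jw{4})$ of $\cB_{4,-}$ (resp.\ $\cF(\check f^{(4)})$ of $\cB_{4,+}$), as in Definition \ref{defn:RotationBases}. Hence $\spann(\cB_{4,\pm})\supseteq\cS_{4,\pm}$ and $\cF$ restricts to an isomorphism $\spann(\cB_{4,\pm})\to\spann(\cB_{4,\mp})$, so the content of the lemma is just to pin down the resulting (necessarily unique) coefficient matrices, which are what Figure \ref{fig:1ClickRotationMatrices} records.

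To do that I would realise everything concretely inside $\cG_\bullet$. Starting from the explicit element $T=+T(\lambda)\in\cG_{3,+}$ of Notation \ref{nota:Generator} (carrying $\lambda$ symbolically throughout), I would build, in turn: the projections $e_{3,+}^i$ of Definition \ref{defn:3boxes} and the duals $e_{3,-}^i=\cF^3(e_{3,+}^i)$; the products $T\circ T$, $T\star T$, $e_{3,\pm}^i\circ e_{3,\pm}^i$ and the tensor-with-a-through-strand elements; the rotational eigenvectors $A,B$ and $\check A=-i\cF(A)$; the projections $g_i$; and finally the systems of matrix units $\cS_{4,+}=\{e_{4,+}^{i,j}\}$, $\cS_{4,-}=\{e_{4,-}^{i}\}$ (Definitions \ref{defn:4boxes}, \ref{defn:gi}) and the ordered bases $\cB_{4,\pm}$ (Definition \ref{defn:RotationBases}). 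All of these are explicit matrices, because every planar tangle --- in particular the one-click rotation $\cF$ --- acts by a known formula on a graph planar algebra \cite{MR1865703}. Then I would apply $\cF$ to each of the four elements of $\cS_{4,+}$ and the four of $\cS_{4,-}$ and solve the eight linear systems expressing the images in the bases $\cB_{4,\mp}$; by the previous paragraph these are consistent with unique solutions. Keeping $\lambda$ symbolic makes the $\lambda$-independence of Remark \ref{rem:ProofsOmitted} manifest in the output, which can also be seen in advance from the gauge rescaling acting on the $\lambda$-dependent loop values of $T(\lambda)$.

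I expect the only real obstacle to be the size of the computation rather than any conceptual subtlety: $\cG_{4,\pm}$ has dimension equal to the total number of length-$8$ loops in 2D2 over all base vertices, so the matrices are large and the work is done by machine in the notebook {\tt G2D2}. What does need care is the conventions --- the spherical normalisation, the shadings, and the positions of the $\star$'s, particularly for the annular basis elements $\cup_i(T)$ and $\cup_i(\check T)$ --- and as consistency checks I would verify that iterating the computed rotation four times reproduces the rotation-by-$\pi$ matrix of Lemma \ref{lem:4ClickRotation} on $\cP_{4,+}$, that $\cF^{8}$ acts as the identity on $\spann(\cB_{4,\pm})$, and that the answer is compatible with $\cF(T)=\check T$ and with the rotational eigenvalues $\omega_A=-1$ and $\omega_B=1$.
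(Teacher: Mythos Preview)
Your proposal is correct and follows the same approach the paper takes: as noted in Remark \ref{rem:ProofsOmitted}, the paper's proof is simply a direct machine computation in the graph planar algebra $\cG_\bullet$, carried out in the notebook {\tt G2D2}, and that is precisely what you describe. Your added well-posedness argument (that rectangular uncappability forces the Temperley--Lieb component to be a multiple of $\jw{4}$, because capping respects the decomposition $\cT\cL\oplus\gA(T)\oplus(\text{low weight})$) and your proposed consistency checks against Lemmas \ref{lem:4ClickRotation} and \ref{lem:3ZMod4Generators} are helpful sanity framing, but they are not in the paper and are not needed for the proof itself.
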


\begin{lem}\label{lem:Cup0InTrains}
The elements $\cup_0(T)$ and $\cup_0(\check{T})$ are the linear combinations of elements of $\cS_{4,+}\cup \set{\cup_i(T)}{1\leq i\leq 7}$ and $\cS_{4,-}\cup \set{\cup_i(\check{T})}{1\leq i\leq 7}$ respectively given in Figure \ref{fig:Cup0InTrains} in Appendix \ref{sec:1ClickRotation}.
\end{lem}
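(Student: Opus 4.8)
The proof, like those of the other lemmas in Sections \ref{sec:3boxes}--\ref{sec:6boxes} (see Remark \ref{rem:ProofsOmitted}), is an explicit linear-algebra computation inside the graph planar algebra $\cG_\bullet$ of 2D2, and the plan is to set it up and carry it out. Every element in sight has a concrete matrix model in $\cG_{4,\pm}$: the generator $T=T(\lambda)$ is listed in Notation \ref{nota:Generator}; from it one obtains $\check T=\cF(T)$ by applying the one-click rotation tangle, the matrix units $\cS_{4,+}=\{e_{4,+}^{i,j}\}$ by assembling $T$ and $\jw{3}$ as in Definitions \ref{defn:3boxes} and \ref{defn:4boxes}, and each annular basis vector $\cup_i(T)\in\cG_{4,+}$ for $0\le i\le 7$ by applying the corresponding annular tangle of Definition \ref{defn:RotationBases} to $T$ (odd $i$) or $\check T$ (even $i$). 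With these matrices available, the lemma amounts to a single Gaussian elimination: solve
$$
\cup_0(T)=\sum_{i,j\in\{1,2\}}c_{ij}\,e_{4,+}^{i,j}+\sum_{k=1}^{7}d_k\,\cup_k(T)
$$
over $\bbQ(\sqrt5)$ (the coefficients are independent of $\lambda$ by Remark \ref{rem:ProofsOmitted}), and record the solution as Figure \ref{fig:Cup0InTrains}. For the dual assertion I would run the identical computation in $\cG_{4,-}$, using $\cS_{4,-}$ from Lemma \ref{lem:S4Minus}; alternatively it can be obtained from the $+$ statement by applying $\cF$ and substituting the one-click rotation formulas of Lemma \ref{lem:1ClickOnS4}.

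What is \emph{not} automatic here --- and is the actual content --- is that the displayed system is consistent at all: a priori $\cup_0(T)$ only has to lie in $\spann\{\cup_0(T),\dots,\cup_7(T)\}$, and the lemma asserts that modulo the four-dimensional depth-$4$ subspace $\spann\cS_{4,+}$ it already lies in the span of $\cup_1(T),\dots,\cup_7(T)$. One should expect this from dimension counting: $\cP_{4,+}$ has dimension $24$ (the number of length-$8$ loops on 2D2 based at $\star$, were $\cP_\bullet$ to have this principal graph), while $\TL_{4,+}$, the annular module $\gA_{4,+}(T)=\spann\{\cup_i(T)\}_{i=0}^{7}$, and the genuinely new depth-$4$ directions (spanned within $\cB_{4,+}$ by $A$, $B$, $\cF(\check{f}^{(4)})$) together account for $14+8+3>24$, so some relation is forced; the one that falls out of the computation is precisely the asserted expression for $\cup_0(T)$ in terms of $\cS_{4,+}$ and the remaining $\cup_i(T)$. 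Running the elimination both confirms that this is the relation and pins down its coefficients.

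The obstacle is therefore bookkeeping rather than ideas. One has to use the spherical normalization for $\cG_\bullet$ throughout, respect the traces $\Tr(e_{3,+}^i)=\sqrt{7+3\sqrt5}$ and $\Tr(e_{4,+}^{i,i})=2+\sqrt5$ fixed in Definitions \ref{defn:3boxes} and \ref{defn:4boxes}, and place the external $\star$ of each $\cup_i$ exactly as in Definition \ref{defn:RotationBases}, so that the annular tangles are applied consistently; getting any of these normalizations wrong produces a system that is spuriously inconsistent. Given the explicit data this is routine, and it is exactly the calculation performed in the {\tt G2D2} notebook bundled with the {\tt arXiv} source.
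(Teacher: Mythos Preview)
Your approach is correct and matches the paper's: this lemma, like the others in Sections \ref{sec:3boxes}--\ref{sec:6boxes}, is a direct computation in $\cG_\bullet$ carried out in {\tt G2D2.nb}, and the paper offers no written argument beyond Remark \ref{rem:ProofsOmitted}.

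One correction to your motivating heuristic, however. Your dimension count is off: $\cF(\check f^{(4)})$ is the one-click rotation of a Temperley--Lieb element and hence already lies in $\cT\cL_{4,+}$ (this is exactly why the proof of Lemma \ref{lem:BInTrains} calls its membership in the train span ``obvious''). The honest tally is therefore $14+8+2=24=\dim\cP_{4,+}$, and no relation among $\cT\cL_{4,+}$, the $\cup_i(T)$, and $\{A,B\}$ is forced by dimension alone. What actually makes your linear system consistent is that the $4$-dimensional space $\spann\cS_{4,+}$ already meets the annular module nontrivially; for instance, from Definition \ref{defn:4boxes},
\[
e_{4,+}^{1,1}-e_{4,+}^{2,2}
= T\otimes 1-\frac{2+\sqrt5}{2\sqrt{7+3\sqrt5}}\bigl(\jw{3}\circ T+T\circ\jw{3}\bigr)
\in\spann\{\cup_i(T):0\le i\le 7\}.
\]
So the relation is confirmed by the computation rather than predicted by any dimension argument. (A minor aside: the coefficients in Figure \ref{fig:Cup0InTrains} involve $i\sqrt{\sqrt5-2}$ and $\sqrt{3+\sqrt5}$, so the elimination is not over $\bbQ(\sqrt5)$.)
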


\section{The 5-box spaces}\label{sec:5boxes}

\begin{defn}\label{defn:5boxes}
Define the following elements of $\cP_{5,+}\subset \cG_{5,+}$:
\begin{align*}
e_{5,+}^{1,1} &= 
\begin{tikzpicture}[baseline = -.1cm]
	\draw (0,-.6)--(0,.6);
	\draw (.5,-.6)--(.5,.6);
	\node at (-.15,.5) {{\scriptsize{$4$}}};
	\node at (-.15,-.5) {{\scriptsize{$4$}}};
	\nbox{unshaded}{(0,0)}{.35}{0}{0}{$e_{4,+}^{1,1}$};
\end{tikzpicture}
-
\left(\frac{\sqrt{7+3\sqrt{5}}}{2+\sqrt{5}}\right)
\left(
e_{4,+}^{1,1}\circ e_{4,+}^{1,1}
+
e_{4,+}^{1,2}\circ e_{4,+}^{2,1}
\right)
\displaybreak[1]\\
e_{5,+}^{1,2} &= 
\begin{tikzpicture}[baseline = -.1cm]
	\draw (0,-.6)--(0,.6);
	\draw (.5,-.6)--(.5,.6);
	\node at (-.15,.5) {{\scriptsize{$4$}}};
	\node at (-.15,-.5) {{\scriptsize{$4$}}};
	\nbox{unshaded}{(0,0)}{.35}{0}{0}{$e_{4,+}^{1,2}$};
\end{tikzpicture}
-
\left(\frac{\sqrt{7+3\sqrt{5}}}{2+\sqrt{5}}\right)
\left(
(e_{4,+}^{1,1}+e_{4,+}^{1,2})\circ (e_{4,+}^{1,2}+e_{4,+}^{2,2})
\right)
\displaybreak[1]\\
e_{5,+}^{2,2} &= 
\begin{tikzpicture}[baseline = -.1cm]
	\draw (0,-.6)--(0,.6);
	\draw (.5,-.6)--(.5,.6);
	\node at (-.15,.5) {{\scriptsize{$4$}}};
	\node at (-.15,-.5) {{\scriptsize{$4$}}};
	\nbox{unshaded}{(0,0)}{.35}{0}{0}{$e_{4,+}^{2,2}$};
\end{tikzpicture}
-
\left(\frac{\sqrt{7+3\sqrt{5}}}{2+\sqrt{5}}\right)
\left(
e_{4,+}^{2,2}\circ e_{4,+}^{2,2}
+
e_{4,+}^{2,1}\circ e_{4,+}^{1,2}
\right),
\end{align*}
let $e_{5,+}^{2,1}:=(e_{5,+}^{1,2})^*$, and set $\cS_{5,+}=\{e_{5,+}^{i,j}\}$.
Note that these definitions immediately imply
\begin{itemize}
\item 
the $e^{i,j}_{5,+}$ are rectangularly uncappable, and $e_{5,+}^{1,2},e_{5,+}^{1,2}$ are uncappable,
\item 
$\cS_{5,+}$ forms a system of matrix units for a copy of $M_2(\bbC)$, and
\item 
the partial traces of $e_{5,+}^{i,i}$ are given by
$
\begin{tikzpicture}[baseline = -.1cm]
	\draw (0,-.6)--(0,.6);
	\draw (.25,.35) arc (180:0:.1cm) -- (.45,-.35) arc (0:-180:.1cm);
	\node at (-.15,.5) {{\scriptsize{$3$}}};
	\node at (-.15,-.5) {{\scriptsize{$3$}}};
	\nbox{unshaded}{(0,0)}{.35}{0}{0}{$e_{5,+}^{i,i}$};
\end{tikzpicture}
=\displaystyle
\frac{\sqrt{3+\sqrt{5}}}{2+\sqrt{5}} e_{4,+}^{i,i}
$, 
so $\Tr(e_{5,+}^{i,i})=\sqrt{3+\sqrt{5}}$.
\end{itemize}
\end{defn}

\subsection{The dual graph and rotation}

\begin{defn}\label{defn:4minus}
For $i=1,2$, define the following elements of $\cP_{5,-}\subset \cG_{5,-}$:
\begin{align*}
e_{5,-}^{i,i} &= 
\begin{tikzpicture}[baseline = -.1cm]
	\draw (0,-.6)--(0,.6);
	\draw (.5,-.6)--(.5,.6);
	\node at (-.15,.5) {{\scriptsize{$4$}}};
	\node at (-.15,-.5) {{\scriptsize{$4$}}};
	\nbox{unshaded}{(0,0)}{.35}{0}{0}{$e_{4,-}^{i+1}$};
\end{tikzpicture}
-
\left(\frac{\sqrt{7+3\sqrt{5}}}{\frac{1}{2}(3+\sqrt{5})}\right)
\left(
e_{4,-}^{i+1}\circ e_{4,-}^{i+1}
\right).
\end{align*}
\end{defn}

\begin{lem}\label{lem:5ClickRotation}
The 5-click rotation $\cF^5: \cP_{5,-}\to \cP_{5,+}$ restricted to $\{e_{5,-}^{1,1},e_{5,-}^{2,2}\}$ satisfies
$$
\cF^5
\begin{pmatrix}
e_{5,-}^{1,1}\\
e_{5,-}^{2,2}
\end{pmatrix}
=
\begin{pmatrix}
\beta & \eta & \overline{\eta} & \alpha\\
\alpha & \overline{\eta} & \eta & \beta
\end{pmatrix}
\begin{pmatrix}
e_{5,+}^{1,1}\\
e_{5,+}^{1,2}\\
e_{5,+}^{2,1}\\
e_{5,+}^{2,2}
\end{pmatrix}
$$
where $\alpha,\beta,\eta$ are as in Lemma \ref{lem:4ClickRotation}.
\end{lem}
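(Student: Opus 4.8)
The plan is to reduce the computation of $\cF^5$ on $\{e_{5,-}^{1,1},e_{5,-}^{2,2}\}$ to facts already established in Sections \ref{sec:3boxes}--\ref{sec:5boxes}, rather than computing directly in $\cG_\bullet$. The key observation is that each $e_{5,-}^{i,i}$ is defined in Definition \ref{defn:4minus} as a capped-off combination of $e_{4,-}^{i+1}$ with $e_{4,-}^{i+1}\circ e_{4,-}^{i+1}$, so $\cF^5$ applied to it can be pushed through to an expression in the $e_{4,\mp}$-level data. First I would use the compatibility of $\cF$ with the annular maps: $\cF$ intertwines the ``add a strand on the right'' inclusion $\cP_{4,-}\to\cP_{5,-}$ with the corresponding inclusion into $\cP_{5,+}$ up to a click, and likewise $\cF$ interacts with $\circ$ via the definition $X\star Y=\cF(\cF(X)\circ\cF(Y))$. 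Combining these, $\cF^5(e_{5,-}^{i,i})$ becomes a linear combination of a single-box term built from $\cF^4(e_{4,-}^{i+1})$ and a two-box $\star$-type term built from $\cF^4(e_{4,-}^{i+1})$ with itself, all living in $\cP_{5,+}$.

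Next I would substitute the known rotation data at depth $4$. By Lemma \ref{lem:S4Minus} we have $\cF^4(e_{4,-}^2)=e_{4,-}^3$ and $\cF^4(e_{4,-}^3)=e_{4,-}^2$, which already tells us how the $g_i$-level pieces transform; to get the $e_{5,+}^{i,j}$ expansion we then need to know how the resulting elements of $\cP_{5,+}$ decompose in the matrix-unit basis $\cS_{5,+}$. This is where Lemma \ref{lem:4ClickRotation} enters: the matrix with entries $\alpha,\beta,\eta,\overline\eta$ appearing in the statement is exactly (a submatrix of) the $\cF^4$ matrix on $\cS_{4,+}$, which is no accident — the five-click rotation on the depth-$5$ projections factors, via the definitions of $e_{5,\pm}^{i,i}$, through the four-click rotation on the depth-$4$ matrix units. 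So the plan is to express $\cF^5(e_{5,-}^{i,i})$ in terms of $\cF^4$ acting on the $e_{4,+}^{a,b}$ that appear when $e_{4,-}^{i+1}$ is rewritten using $g_1,g_2$ and $\check A$ (Definition \ref{defn:gi}), and then read off the coefficients. Sphericality and the trace normalizations in Definitions \ref{defn:4boxes}--\ref{defn:5boxes} pin down the scalar prefactors, so that the final matrix has no free parameters; this is consistent with Remark \ref{rem:ProofsOmitted}, that everything past Lemma \ref{lem:ParameterFamilies} is $\lambda$-independent.

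The main obstacle I expect is bookkeeping rather than conceptual: correctly tracking the half-click shifts of the external $\star$ and the shaded/unshaded regions as $\cF$ is moved past the right-strand inclusions and past $\circ$, since an off-by-one in the click count would permute the columns $(e_{5,+}^{1,1},e_{5,+}^{1,2},e_{5,+}^{2,1},e_{5,+}^{2,2})$ and transpose $\eta\leftrightarrow\overline\eta$. To control this I would verify consistency against two independent checks: first, applying $\cF^{10}=\cF^5\circ\cF^5$ and the analogous (not stated, but derivable) matrix for $\cF^5:\cP_{5,+}\to\cP_{5,-}$ must return the identity (equivalently, square the combined rotation and compare with $\cF^{10}$ computed from $\cF^4$ at depth $4$); second, taking partial traces of both sides and using the trace values $\Tr(e_{5,+}^{i,i})=\sqrt{3+\sqrt5}$ together with $\Tr(e_{5,-}^{1,1})$, $\Tr(e_{5,-}^{2,2})$ must be consistent, which fixes the overall row normalization. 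Since $\alpha+\beta=1$ and $|\eta|^2=-2+\sqrt5$, one checks each row of the claimed matrix sums (in the appropriate trace-weighted sense) correctly, confirming the coefficients $(\beta,\eta,\overline\eta,\alpha)$ and $(\alpha,\overline\eta,\eta,\beta)$. As in the other lemmas of these sections, the residual elementary verification is left to the {\tt G2D2} notebook.
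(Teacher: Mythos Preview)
The paper does not give an argument for this lemma at all: as stated in Remark~\ref{rem:ProofsOmitted}, every lemma in Sections~\ref{sec:3boxes}--\ref{sec:6boxes} is proved by direct computation in the graph planar algebra $\cG_\bullet$, carried out in the {\tt G2D2} notebook. So your proposal is not a variant of the paper's proof; it is an attempt to replace a machine verification by a structural reduction.

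That reduction, as you have written it, has a real gap. The elements $e_{5,-}^{i,i}$ are built from $e_{4,-}^{i+1}\in\cS_{4,-}\subset\cP_{4,-}$, while the target basis $e_{5,+}^{i,j}$ is built from $e_{4,+}^{i,j}\in\cS_{4,+}\subset\cP_{4,+}$. When you push $\cF^5$ past the right-strand inclusion and past $\circ$, the depth-$4$ pieces you obtain are $\cF^4$-rotates of elements of $\cS_{4,-}$, and Lemma~\ref{lem:S4Minus} tells you $\cF^4$ merely permutes those (sending $e_{4,-}^2\leftrightarrow e_{4,-}^3$); it does not produce the $\alpha,\beta,\eta,\overline\eta$ matrix. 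Lemma~\ref{lem:4ClickRotation}, which you invoke to explain those coefficients, concerns $\cF^4$ on $\cS_{4,+}$, a different space entirely. To pass from $\cS_{4,-}$ to $\cS_{4,+}$ you would need an odd number of clicks, and the only available bridge is Lemma~\ref{lem:1ClickOnS4}, which expresses $\cF(\cS_{4,\pm})$ in the eleven-element bases $\cB_{4,\mp}$ including annular consequences of $T$, not cleanly in $\cS_{4,\mp}$. So your claimed factorization ``through the four-click rotation on the depth-$4$ matrix units'' is not established, and the appearance of the same $\alpha,\beta,\eta$ remains, at the level of your argument, an unexplained coincidence.

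Your consistency checks (squaring to $\cF^{10}$, trace compatibility) are reasonable sanity checks on a formula one already believes, but they do not constitute a derivation: they constrain the matrix but do not determine it. In short, the paper's route is a brute-force calculation in $\cG_\bullet$; your route would be more conceptual if it worked, but the key step linking the $\cS_{4,-}$-built objects to the $\cS_{4,+}$ rotation matrix is missing.
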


\begin{defn}\label{defn:S5Minus}
By Lemma \ref{lem:5ClickRotation}, we define
$$
\begin{pmatrix}
e_{5,-}^{1,2}
\\
e_{5,-}^{2,1}
\end{pmatrix}
=\cF^5\left(
\begin{pmatrix}
\overline{\eta} & \alpha & \beta & \eta
\\
\eta & \beta & \alpha &\overline{\eta}
\end{pmatrix}
\begin{pmatrix}
e_{5,+}^{1,1}\\
e_{5,+}^{1,2}\\
e_{5,+}^{2,1}\\
e_{5,+}^{2,2}
\end{pmatrix}
\right)
$$
and we set $\cS_{5,-}=\{e_{5,-}^{i,j}\}$.
Since $\cS_{5,-}$ is obtained from $\cS_{5,+}$ by applying $\cF^5$ to rotate by $\pi$ and by a unitary (related to that from Lemma \ref{lem:4ClickRotation}), we immediately have:
\begin{itemize}
\item 
the elements of $\cS_{5,-}$ are rectangularly uncappable, and $e_{5,-}^{1,2},e_{5,-}^{1,2}$ are uncappable.
\item 
$\cS_{5,-}$ forms a system of matrix units for a copy of $M_2(\bbC)$, and
\item
the partial traces 
$
\begin{tikzpicture}[baseline = -.1cm]
	\draw (0,-.6)--(0,.6);
	\draw (.25,.35) arc (180:0:.1cm) -- (.45,-.35) arc (0:-180:.1cm);
	\node at (-.15,.5) {{\scriptsize{$3$}}};
	\node at (-.15,-.5) {{\scriptsize{$3$}}};
	\nbox{unshaded}{(0,0)}{.35}{0}{0}{$e_{5,-}^{i,i}$};
\end{tikzpicture}
=\displaystyle
\left(\frac{\sqrt{3+\sqrt{5}}}{\frac{1}{2}(3+\sqrt{5})}\right)
e_{4,-}^{i+1}
$, 
so $\Tr(e_{5,-}^{i,i})=\sqrt{3+\sqrt{5}}$.
\end{itemize}
\end{defn}

\section{The 6-box spaces}\label{sec:6boxes}

\begin{defn}\label{defn:6boxes}
Define the following elements of $\cP_{6,+}\subset \cG_{6,+}$:
\begin{align*}
e_{6,+}^{i,i} &= 
\begin{tikzpicture}[baseline = -.1cm]
	\draw (0,-.6)--(0,.6);
	\draw (.5,-.6)--(.5,.6);
	\node at (-.15,.5) {{\scriptsize{$4$}}};
	\node at (-.15,-.5) {{\scriptsize{$4$}}};
	\nbox{unshaded}{(0,0)}{.35}{0}{0}{$e_{5,+}^{1,1}$};
\end{tikzpicture}
-
\left(\frac{2+\sqrt{5}}{\sqrt{3+\sqrt{5}}}\right)
\left(
e_{5,+}^{i,i}\circ e_{5,+}^{i,i}
\right)
\text{ for $i=1,2$, and}
\displaybreak[1]\\
e_{6,+}^{1,2} &= 
\begin{tikzpicture}[baseline = -.1cm]
	\draw (0,-.6)--(0,.6);
	\draw (.5,-.6)--(.5,.6);
	\node at (-.15,.5) {{\scriptsize{$4$}}};
	\node at (-.15,-.5) {{\scriptsize{$4$}}};
	\nbox{unshaded}{(0,0)}{.35}{0}{0}{$e_{5,+}^{1,2}$};
\end{tikzpicture}
-
\left(\frac{2+\sqrt{5}}{\sqrt{3+\sqrt{5}}}\right)
\left(
e_{5,+}^{1,1}\circ e_{5,+}^{1,2}
\right).
\end{align*}
let $e_{6,+}^{2,1}=(e_{6,+}^{1,2})^*$, and set $\cS_{6,+}=\{e_{6,+}^{i,j}\}$.
Note that these definitions immediately imply
\begin{itemize}
\item 
the $e^{i,j}_{6,+}$ are rectangularly uncappable, and $e_{6,+}^{1,2},e_{6,+}^{2,1}$ are uncappable,
\item 
$\cS_{6,+}$ forms a system of matrix units for a copy of $M_2(\bbC)$, and
\item 
the partial traces
$
\begin{tikzpicture}[baseline = -.1cm]
	\draw (0,-.6)--(0,.6);
	\draw (.25,.35) arc (180:0:.1cm) -- (.45,-.35) arc (0:-180:.1cm);
	\node at (-.15,.5) {{\scriptsize{$3$}}};
	\node at (-.15,-.5) {{\scriptsize{$3$}}};
	\nbox{unshaded}{(0,0)}{.35}{0}{0}{$e_{6,+}^{i,i}$};
\end{tikzpicture}
=\displaystyle
\left(\frac{1}{\sqrt{3+\sqrt{5}}}\right)
e_{5,+}^{i,i}
$, 
so $\Tr(e_{6,+}^{i,i})=1$.
\end{itemize}
\end{defn}

\begin{lem}\label{lem:6ClickRotation}
The 6-click rotation $\cF^6: \cP_{6,+}\to \cP_{6,+}$ restricted to the $\{e_{6,+}^{i,j}\}$ is given by
$$
\begin{pmatrix}
\overline{e_{6,+}^{1,1}}\\
\overline{e_{6,+}^{1,2}}\\
\overline{e_{6,+}^{2,1}}\\
\overline{e_{6,+}^{2,2}}
\end{pmatrix}
=
\cF^6
\begin{pmatrix}
e_{6,+}^{1,1}\\
e_{6,+}^{1,2}\\
e_{6,+}^{2,1}\\
e_{6,+}^{2,2}
\end{pmatrix}
=
\begin{pmatrix}
0 & 0 & 0 & 1\\
0 & -1 & 0 & 0\\
0 & 0 & -1 & 0\\
1 & 0 & 0 & 0
\end{pmatrix}
\begin{pmatrix}
e_{6,+}^{1,1}\\
e_{6,+}^{1,2}\\
e_{6,+}^{2,1}\\
e_{6,+}^{2,2}
\end{pmatrix}.
$$
\end{lem}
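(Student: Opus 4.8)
As with every lemma in Sections \ref{sec:3boxes}---\ref{sec:6boxes}, the quick route is a direct calculation in the graph planar algebra $\cG_\bullet$ of 2D2: unwind Definitions \ref{defn:3boxes}---\ref{defn:6boxes} to realize each $e_{6,+}^{i,j}$ as an explicit matrix in $\cG_{6,+}$, apply the explicit linear map $\cF$ six times, and read off the coefficients against $\cS_{6,+}$ (this is carried out in the notebook {\tt G2D2.nb}; see Remark \ref{rem:ProofsOmitted}). Below I indicate how to organize that computation and why the matrix has the stated shape.

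First, $\cF^6$ is the $\pi$-rotation on $\cP_{6,+}$; it is an involution ($\cF^{12}$ being the identity on $\cP_{6,+}$), preserves the trace, and lies in the annular category action, so it preserves the span of the annular consequences of the lower box spaces and hence their orthocomplement $\spann \cS_{6,+}$ --- the ``new'' $M_2(\bbC)$-block at depth $6$, just as $\cF^4$ preserves $\spann \cS_{4,+}$ in Lemma \ref{lem:4ClickRotation}. So the content of the lemma is the precise $4\times 4$ matrix, and since $\cF^6(x^*)=\cF^6(x)^*$ and $e_{6,+}^{2,1}=(e_{6,+}^{1,2})^*$, and since the $T\mapsto -T$ symmetry swaps $e_{6,+}^{1,1}\leftrightarrow e_{6,+}^{2,2}$ while fixing $\cF^6$, it is enough to compute $\cF^6(e_{6,+}^{1,1})$ and $\cF^6(e_{6,+}^{1,2})$ and re-expand in $\cS_{6,+}$.

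To descend to lower depth, recall from Definition \ref{defn:6boxes} that each $e_{6,+}^{i,j}$ is a fixed linear combination of $\iota(e_{5,+}^{i,j})$ (the image under adding a through-strand on the right) and of annular products $e_{5,+}^{k,l}\circ e_{5,+}^{k',l'}$. The $\pi$-rotation interacts with both in a controlled way: $\cF^6(x\otimes 1)=1\otimes\cF^5(x)$, i.e., the through-strand moves from the right of $x$ to the left of $\cF^5(x)\in\cP_{5,-}$; and $\cF^6(X\circ Y)$ can be rewritten, after accounting for the interspersed caps, in terms of $\cF^5 X$, $\cF^5 Y$ and the operation $X\star Y=\cF(\cF(X)\circ\cF(Y))$. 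Substituting the formulas for $\cF^5$ relating $\cS_{5,+}$ and $\cS_{5,-}$ from Lemma \ref{lem:5ClickRotation} and Definition \ref{defn:S5Minus}, the one-click data of Lemma \ref{lem:1ClickOnS4}, and the $M_2(\bbC)$-relations and partial-trace identities of Definitions \ref{defn:5boxes} and \ref{defn:S5Minus}, reduces everything to the scalars $\alpha,\beta,\eta$ of Lemma \ref{lem:4ClickRotation}; their defining relations ($\alpha+\beta=1$, $\alpha\beta=|\eta|^2$, and so on) then collapse the answer to the signed permutation displayed. The two $-1$'s on the off-diagonal rows are inherited from the rotational eigenvalue $\omega_A=-1$ of Lemma \ref{lem:3ZMod4Generators} (the elements $e_{6,+}^{1,2}$ and $e_{6,+}^{2,1}$ carry a copy of $A$ through $e_{4,+}^{1,2}=e_{4,+}^{1,1}Ae_{4,+}^{2,2}$), while the swap $e_{6,+}^{1,1}\leftrightarrow e_{6,+}^{2,2}$ traces back to the fact that the $\pi$-rotation already interchanges the depth-$3$ summands $e_{3,+}^1$ and $e_{3,+}^2$.

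The main obstacle is bookkeeping, not concept: keeping track of shadings and of which side each through-strand sits on while pushing $\cF$ through the nested definitions, and checking that the $\cF^6$-images of the $\circ$-correction terms contribute nothing outside $\spann \cS_{6,+}$ --- equivalently, that the terms involving the annular elements $\cup_i(T)$, $\cup_i(\check T)$ and their depth-$5$ analogues all cancel, using Lemma \ref{lem:Cup0InTrains} to eliminate $\cup_0$. Since the matrix-level computation in $\cG_\bullet$ handles this cancellation automatically, we perform it there and use the structural argument above only as a consistency check.
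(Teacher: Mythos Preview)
Your proposal is correct and takes essentially the same approach as the paper: the paper gives no argument at all for this lemma, deferring entirely to the direct graph-planar-algebra computation in the {\tt G2D2.nb} notebook via Remark \ref{rem:ProofsOmitted}, and you do the same while adding heuristic structural commentary (symmetry under $T\mapsto -T$, the role of $\omega_A=-1$, reduction through Definitions \ref{defn:5boxes}--\ref{defn:6boxes}) as a sanity check. The extra commentary is not in the paper and is explicitly labeled as intuition rather than proof, so there is no divergence in the actual argument.
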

\section{Evaluation and principal graphs}\label{sec:Evaluation}

\subsection{2-strand jellyfish relations}

We now show the relations given in Sections \ref{sec:3boxes}---\ref{sec:6boxes} are sufficient to prove the existence of 2-strand jellyfish relations for $\cS_{4,+}$.

\begin{lem}\label{lem:BInTrains}
$\cB_{4,\pm}\subset \spann(\trains_{4,\pm}(\cS_{4,\pm}))$, where $\cB_{4,\pm}$ is as in Definition \ref{defn:RotationBases}.
\end{lem}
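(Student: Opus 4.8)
The statement to prove is Lemma~\ref{lem:BInTrains}: that each element of $\cB_{4,\pm}$ lies in the span of trains on $\cS_{4,\pm}$. Recall $\cB_{4,+}=\{A,B,\cF(\check f^{(4)})\}\cup\{\cup_i(T):0\le i\le 7\}$, and dually for $\cB_{4,-}$. The first observation is that the three ``non-annular'' generators $A$, $B$, and $\cF(\check f^{(4)})$ are essentially trivial cases: $\cF(\check f^{(4)})\in\cT\cL_{4,+}$ is a Temperley--Lieb element, hence a zero-car train, and $A,B$ are, up to Temperley--Lieb corrections and normalization, linear combinations of $T\circ T$ and $T\star T$ (see Definition~\ref{defn:4boxes}); since $T=e_{3,+}^1-e_{3,+}^2$ and each $e_{3,+}^i$ is recovered from $\cS_{4,+}$ by the partial-trace formula in Definition~\ref{defn:4boxes} (which expresses $e_{3,+}^i\circ e_{3,+}^i$ in terms of $e_{4,+}^{i,i}$ and a capped-off $e_{3,+}^i$, i.e.\ a 1-car train), one rewrites $T\circ T$ and $T\star T$ directly as linear combinations of 1-car and 2-car trains on $\cS_{4,+}$. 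The $\star$-product case $T\star T=\cF(\cF(T)\circ\cF(T))$ uses the same argument applied to $\check T$ together with the rotation formulas of Lemmas~\ref{lem:4ClickRotation} and~\ref{lem:1ClickOnS4} to translate back to $\cP_{4,+}$.

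The substantive part is the annular basis $\{\cup_i(T)\}_{i=0}^{7}$. The idea is to handle $\cup_0(T)$ first and then propagate. By Lemma~\ref{lem:Cup0InTrains}, $\cup_0(T)$ is already known to be a linear combination of elements of $\cS_{4,+}\cup\{\cup_i(T):1\le i\le 7\}$; so it suffices to show each $\cup_i(T)$ for $1\le i\le 7$ is a train on $\cS_{4,+}$. But for $i\ge 1$ the annular consequence $\cup_i(T)$ is literally a train: it is the element $\check T$ or $T$ (a single generator in $\cS_{3,-}$ or $\cS_{3,+}$) with a cup attached and the star not in the distinguished region --- and since $e_{3,\pm}^i=\tfrac12(\jw{3}\pm T)$ are expressible through $\cS_{4,\pm}$ via the capped partial-trace identities, and $T,\check T$ through $\cS_{3,\pm}\subset\cS_{\le 4}$, each such $\cup_i$ is a genuine 1-car (or at worst 2-car) train. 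Feeding this back into Lemma~\ref{lem:Cup0InTrains} shows $\cup_0(T)\in\spann(\trains_{4,+}(\cS_{4,+}))$ as well. The dual statements for $\cB_{4,-}$ follow by applying $\cF$ throughout, using Lemma~\ref{lem:1ClickOnS4} and Lemma~\ref{lem:S4Minus}, or simply by the same argument with all signs of shading reversed.

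In practice, as Remark~\ref{rem:ProofsOmitted} makes explicit, every identity above reduces to a finite matrix computation in the graph planar algebra $\cG_\bullet$ of 2D2, carried out in the notebook \code{G2D2}. So the actual proof is: assemble the explicit expansions of $A$, $B$, $\cF(\check f^{(4)})$ and each $\cup_i(T)$ in terms of the matrix-unit systems $\cS_{4,\pm}$ and the Temperley--Lieb ideal, invoke Lemma~\ref{lem:Cup0InTrains} for the one genuinely non-obvious coefficient vector ($\cup_0$), and check in \code{FusionAtlas} that the resulting linear combinations reproduce $\cB_{4,\pm}$ on the nose.

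\textbf{Main obstacle.} The conceptual content is light --- everything is ``a train, by inspection'' once the right partial-trace identities are in hand --- so the real obstacle is bookkeeping: getting the normalization constants (the various $\sqrt{7+3\sqrt5}$, $2+\sqrt5$ factors from Definitions~\ref{defn:3boxes}--\ref{defn:4boxes}) exactly right when re-expressing $e_{3,\pm}^i\circ e_{3,\pm}^i$, $T\circ T$, $T\star T$ through $\cS_{4,\pm}$, and correctly tracking the position of the $\star$ and the shading when a cup is attached (so that $\cup_i(T)$ really is an admissible train rather than merely a planar diagram). The $T\star T$ term is the fiddliest, since it forces a round-trip through $\cP_{4,-}$ via the one-click rotation of Lemma~\ref{lem:1ClickOnS4}; one must be careful that the image lands in $\spann(\trains_{4,+})$ and not just in $\spann(\trains_{4,-})$ rotated. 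All of this is exactly the kind of verification the accompanying \code{Mathematica} notebook is designed to discharge.
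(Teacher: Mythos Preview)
Your overall plan matches the paper's: the treatment of $\cF(\check f^{(4)})$, of $\cup_i(T)$ for $1\le i\le 7$ (these are trains on $T$, and $T$ is a train on $\cS_{4,+}$ via the partial-trace identity), and of $\cup_0(T)$ via Lemma~\ref{lem:Cup0InTrains} is exactly what the paper does. Your treatment of $T\circ T$ is also fine.

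The gap is in your handling of $T\star T$. You propose to show $\check T\circ\check T\in\spann(\trains_{4,-}(\cS_{4,-}))$ and then ``translate back to $\cP_{4,+}$'' using Lemma~\ref{lem:1ClickOnS4}. But that lemma expresses $\cF(e_{4,-}^k)$ as a linear combination of the elements of $\cB_{4,+}$, \emph{including $A$ and $B$}. So to conclude that $\cF$ of a train on $\cS_{4,-}$ lands in $\spann(\trains_{4,+}(\cS_{4,+}))$ you would already need $\cB_{4,+}\subset\spann(\trains_{4,+}(\cS_{4,+}))$ --- precisely the statement being proved. You flag this step as ``the fiddliest'' but do not resolve the circularity; deferring to the notebook is not an argument, since the point of the lemma is that the inclusion holds \emph{for structural reasons independent of $\lambda$}. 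The paper sidesteps the issue entirely: it isotopes $T\star T$ into the explicit diagram displayed in its proof and observes that this diagram is a concatenation of trains on $T$ and $\cup_0(T)$ --- no passage through $\cP_{4,-}$ and no appeal to Lemma~\ref{lem:1ClickOnS4}. Since $T$ and $\cup_0(T)$ are already in the train span by the earlier steps, $T\star T$ (hence $A$ and $B$) follows without circularity.
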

\begin{proof}
We prove the result for $\cB_{4,+}$, and the result for $\cB_{4,-}$ is similar.
An easy calculation using Definition \ref{defn:4boxes} shows
$$
T=
e_{3,+}^1 - e_{3,+}^2
=
\frac{\sqrt{7+3\sqrt{5}}}{2+\sqrt{5}}
\left(
\begin{tikzpicture}[baseline = -.1cm]
	\draw (0,-.6)--(0,.6);
	\draw (.25,.35) arc (180:0:.1cm) -- (.45,-.35) arc (0:-180:.1cm);
	\node at (-.15,.5) {{\scriptsize{$3$}}};
	\node at (-.15,-.5) {{\scriptsize{$3$}}};
	\nbox{unshaded}{(0,0)}{.35}{0}{0}{$e_{4,+}^{1,1}$};
\end{tikzpicture}
-
\begin{tikzpicture}[baseline = -.1cm]
	\draw (0,-.6)--(0,.6);
	\draw (.25,.35) arc (180:0:.1cm) -- (.45,-.35) arc (0:-180:.1cm);
	\node at (-.15,.5) {{\scriptsize{$3$}}};
	\node at (-.15,-.5) {{\scriptsize{$3$}}};
	\nbox{unshaded}{(0,0)}{.35}{0}{0}{$e_{4,+}^{2,2}$};
\end{tikzpicture}
\right) 
\in \spann(\trains_{3,+}(\cS_{4,+})).
$$
We have $\cup_0(T)\in \spann(\trains_{4,+}(\cS_{4,+}))$ by Lemma \ref{lem:Cup0InTrains}.
Thus $\cup_i(T)\in \spann(\trains_{4,+}(\cS_{4,+}))$ for all $0\leq i \leq 7$, and obviously $\cF(\check{f}^{(4)})\in  \spann(\trains_{4,+}(\cS_{4,+}))$.

To finish the proof, note that since $T,\cup_0(T)$ are in the span of trains, we have $A,B\in \spann(\trains_{4,\pm}(\cS_{4,\pm}))$ since $T$ is a rotational eigenvector, and $T\circ T$ and
$$
T\star T = 
\begin{tikzpicture}[baseline = -.1cm]
	\draw (-.1,.9)--(-.1,.4);
	\draw (0,.9)--(0,.4);
	\draw (.1,.9)--(.1,.4);
	\draw (.05,-.9)--(.05,.4);
	\draw (-.05,-.9)--(-.05,.4);
	\draw (-.5,.9)--(-.5,-.2) arc (-180:-30:.25cm);
	\draw (.5,-.9)--(.5,.2) arc (0:150:.25cm);
	\draw (.15,-.2) arc (180:0:.12cm) -- (.39,-.9);
	\ncircle{unshaded}{(0,.45)}{.3}{180}{$\check{T}$};
	\ncircle{unshaded}{(0,-.45)}{.3}{210}{$T$};
\end{tikzpicture}
$$
are concatenations of trains on $T$ and $\cup_0(T)$.
\end{proof}

\begin{cor}\label{cor:RotationInTrains}
$\cF(\cS_{4,\pm}),\cF^{-1}(\cS_{4,\pm})\subset \spann(\trains_{4,\mp}(\cS_{4,\mp}))$.
\end{cor}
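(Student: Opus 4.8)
The plan is to deduce both inclusions in the corollary directly from Lemmas~\ref{lem:1ClickOnS4} and~\ref{lem:BInTrains}, with no further computation in $\cG_\bullet$. For the $\cF$ part, Lemma~\ref{lem:1ClickOnS4} expresses each element of $\cF(\cS_{4,\pm})$ as an explicit linear combination of the elements of $\cB_{4,\mp}$, so $\cF(\cS_{4,\pm})\subseteq\spann(\cB_{4,\mp})$; and $\spann(\cB_{4,\mp})\subseteq\spann(\trains_{4,\mp}(\cS_{4,\mp}))$ by Lemma~\ref{lem:BInTrains}, since the span of trains is a linear subspace. This already gives $\cF(\cS_{4,\pm})\subseteq\spann(\trains_{4,\mp}(\cS_{4,\mp}))$.

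For the $\cF^{-1}$ part I would invoke the standard compatibility of the one-click rotation with the adjoint in a planar $*$-algebra, namely $\cF(x)^*=\cF^{-1}(x^*)$ (see \cite{MR2972458}). The set $\cS_{4,\pm}$ is $*$-closed: $e_{4,+}^{1,1}$, $e_{4,+}^{2,2}$ and the $e_{4,-}^i$ are self-adjoint projections, and $e_{4,+}^{2,1}=(e_{4,+}^{1,2})^*$ by Definition~\ref{defn:4boxes} (and similarly for $\cS_{4,-}$ by Lemma~\ref{lem:S4Minus}). Hence, letting $x$ run over $\cS_{4,\pm}$ so that $x^*$ also runs over $\cS_{4,\pm}$, we get $\cF^{-1}(\cS_{4,\pm})=\set{\cF^{-1}(x^*)}{x\in\cS_{4,\pm}}=\set{\cF(x)^*}{x\in\cS_{4,\pm}}=(\cF(\cS_{4,\pm}))^*$. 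Moreover $\spann(\trains_{4,\mp}(\cS_{4,\mp}))$ is $*$-closed, since the adjoint of a train is obtained by reflecting the underlying diagram: this replaces each box $s$ by $s^*\in\cS_{4,\mp}$, keeps every box and every $\star$ adjacent to the (reflected) external boundary, and reflects the single Temperley-Lieb diagram to another Temperley-Lieb diagram. Applying $*$ to the inclusion from the $\cF$ part therefore yields $\cF^{-1}(\cS_{4,\pm})=(\cF(\cS_{4,\pm}))^*\subseteq\spann(\trains_{4,\mp}(\cS_{4,\mp}))$.

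I do not expect any real obstacle; the one point worth writing out is that the span of trains on a $*$-closed subset is $*$-closed, which follows at once from the description of a train recalled in Section~\ref{sec:StabilityAndJellyfish}. If one preferred to avoid the adjoint argument, the alternative is to write $\cF^{-1}=\cF^{7}=\cF^{3}\circ\cF^{4}$, evaluate $\cF^{4}$ on $\cS_{4,+}$ using Lemma~\ref{lem:4ClickRotation} and on $\cS_{4,-}$ using Lemma~\ref{lem:S4Minus} (in each case an invertible linear combination of $\cS_{4,\pm}$), and iterate Lemma~\ref{lem:1ClickOnS4} three further times; every resulting term lies in $\cS_{4,\mp}\cup\cB_{4,\mp}$ and hence in $\spann(\trains_{4,\mp}(\cS_{4,\mp}))$ by Lemma~\ref{lem:BInTrains}. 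This route is longer and gives nothing extra, so the adjoint argument is preferable.
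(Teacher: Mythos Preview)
Your proposal is correct and follows essentially the same route as the paper: first $\cF(\cS_{4,\pm})\subseteq\spann(\cB_{4,\mp})\subseteq\spann(\trains_{4,\mp}(\cS_{4,\mp}))$ via Lemmas~\ref{lem:1ClickOnS4} and~\ref{lem:BInTrains}, and then the $\cF^{-1}$ inclusion by taking adjoints, using that $\cS_{4,\pm}$ is $*$-closed and that the span of trains is closed under $*$. The paper phrases the second step as $\cF^{-1}(\cS_{4,\pm})\subseteq\spann(\cB_{4,\mp}^*)\subseteq\spann(\trains_{4,\mp}(\cS_{4,\mp}))$, which is exactly your adjoint argument written more tersely; your added justification for why the train space is $*$-closed is a welcome elaboration.
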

\begin{proof}
First, $\cF(\cS_{4,\pm})\subseteq \spann(\cB_{4,\mp})\subseteq \spann(\trains_{4,\mp}(\cS_{4,\mp}))$ by Lemmas \ref{lem:1ClickOnS4} and \ref{lem:BInTrains}.
Since the train space is closed under adjoints, the same lemmas imply
$\cF^{-1}(\cS_{4,\pm})\subseteq \spann(\cB_{4,\mp}^*)\subseteq \spann(\trains_{4,\mp}(\cS_{4,\mp}))$.
\end{proof}

\begin{lem}\label{lem:TrainUp}
$\cS_{4,-}\subset \spann(\trains_{4,-}(\cS_{5,-}))$.
\end{lem}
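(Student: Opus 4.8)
I would like to show that each element of $\cS_{4,-}$ lies in the span of trains on $\cS_{5,-}$; by linearity it suffices to handle the $g_i$ (since $e_{4,-}^1,e_{4,-}^2$ are spanned by $g_1$ and $g_1\check A g_1$, and likewise for $g_2$, and $g_i\check A g_i$ is a product of elements all reachable from $\cS_{4,-}$), so the real content is to write $g_1$ and $g_2$ — and then $\check A$ up to annular consequences — as trains on $\cS_{5,-}$. The basic move is the standard ``trip up'' step of the jellyfish algorithm: given $e_{5,-}^{i,i}$ as defined in Definition \ref{defn:4minus}, its partial trace recovers a multiple of $e_{4,-}^{i+1}$, so that
$$
e_{4,-}^{i+1} = \left(\frac{\tfrac12(3+\sqrt5)}{\sqrt{7+3\sqrt5}}\right)
\begin{tikzpicture}[baseline = -.1cm]
	\draw (0,-.6)--(0,.6);
	\draw (.25,.35) arc (180:0:.1cm) -- (.45,-.35) arc (0:-180:.1cm);
	\node at (-.15,.5) {{\scriptsize{$4$}}};
	\node at (-.15,-.5) {{\scriptsize{$4$}}};
	\nbox{unshaded}{(0,0)}{.35}{0}{0}{$e_{5,-}^{i,i}$};
\end{tikzpicture}
\ +\
\left(\text{a cap-closure of }e_{4,-}^{i+1}\circ e_{4,-}^{i+1}\right),
$$
where the second term, being a capped-off product of two $\cS_{4,-}$-elements glued along a single strand, is itself visibly a $2$-car train on $\cS_{4,-}$. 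Here is the subtle point, and the reason this is a genuine lemma rather than a triviality: that residual $2$-car train is on $\cS_{4,-}$, not $\cS_{5,-}$. So I would then invoke Lemma \ref{lem:ReducedTrains}: a reduced $m$-car train on $\cS_{\le 4}$ living in $\cP_{4,-}$ with $m\ge 2$ is automatically a reduced train on $\cS_{\le 4-2+1}=\cS_{\le 3}=\emptyset$, i.e.\ purely Temperley--Lieb. Thus the residual term collapses to a scalar multiple of the identity (a $0$-car train, which is in particular a train on $\cS_{5,-}$), after reducing it — using properties (3) and (4) of Theorem \ref{thm:EvaluableSubalgebra}, whose structure constants for $\cS_{\le 4}$ are available by the computations of Sections \ref{sec:3boxes}--\ref{sec:4boxes} — to a reduced train. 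This expresses $e_{4,-}^{i+1}$, hence $g_i$, as a linear combination of a $1$-car train on $e_{5,-}^{i,i}\in\cS_{5,-}$ and scalars, so $g_i\in\spann(\trains_{4,-}(\cS_{5,-}))$.

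It remains to put $g_i\check A g_i$ — equivalently $\check A$ modulo the span of $\cS_{4,-}$ together with Temperley--Lieb — into the span of trains on $\cS_{5,-}$; equivalently $e_{4,-}^1,\dots,e_{4,-}^4$ themselves. One route: by Corollary \ref{cor:RotationInTrains} and Lemma \ref{lem:1ClickOnS4}, $\check A=-i\cF(A)$ and $\cF^{-1}$ of the $\cS_{4,-}$-elements already lie in $\spann(\trains_{4,-}(\cS_{4,-}))$, and then re-applying the argument of the previous paragraph upgrades $\cS_{4,-}$-trains to $\cS_{5,-}$-trains exactly as above; alternatively, one observes directly that $g_i\check A g_i$ occurs as a partial trace of a product of two $\cS_{5,-}$ matrix units $e_{5,-}^{i,j}$ (this is how Definition \ref{defn:S5Minus} was set up, pulling back through $\cF^5$ the relations of Lemma \ref{lem:5ClickRotation}), so it is manifestly a $1$-car train on $\cS_{5,-}$ plus a residual $2$-car $\cS_{4,-}$-train, which again reduces via Lemma \ref{lem:ReducedTrains}. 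Either way the only inputs are: the explicit partial-trace identities recorded in Definitions \ref{defn:4minus} and \ref{defn:S5Minus}, the reduction moves of Theorem \ref{thm:EvaluableSubalgebra}(3)--(4), and the dimension bookkeeping of Lemma \ref{lem:ReducedTrains}.

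\emph{Main obstacle.} The conceptual skeleton above is clean, but the work is in the reduction step: after writing $e_{4,-}^{i+1}$ as a trace of $e_{5,-}^{i,i}$ plus a capped product, that capped product is a $2$-car train whose two cars may be connected to each other or to themselves by strands, so it is \emph{not} a priori reduced, and one must actually run the relations of properties (3) and (4) — the cap-absorption and strand-absorption maps computed in Sections \ref{sec:3boxes}--\ref{sec:4boxes} — to bring it to reduced form before Lemma \ref{lem:ReducedTrains} applies. Keeping track of which caps land ``on the left'' (where absorption may fail) versus elsewhere, and verifying that the $\cS_{4,-}$ elements genuinely absorb their caps into $\spann(\cS_{3,-})\oplus\cT\cL_{3,-}=\cT\cL_{3,-}$, is the place where one must be careful; this is precisely the content hidden in Remark \ref{rem:ProofsOmitted}, and in the actual paper it is discharged by the \texttt{G2D2} \texttt{Mathematica} notebook. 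I would present the proof at the level of ``apply the trip-up identity, reduce the residual train by (3)--(4), then apply Lemma \ref{lem:ReducedTrains}'' and relegate the numeric verification to the notebook, exactly as the surrounding lemmas do.
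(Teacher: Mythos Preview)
Your argument has several genuine gaps, and misses the much simpler route the paper takes.

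\textbf{The direct approach you overlook.} Definition~\ref{defn:S5Minus} records that the partial trace of $e_{5,-}^{i,i}$ is \emph{exactly} a scalar multiple of $e_{4,-}^{i+1}$, with no residual term. So $e_{4,-}^2$ and $e_{4,-}^3$ are immediately $1$-car trains on $\cS_{5,-}$. Taking one more partial trace (using Lemma~\ref{lem:S4Minus}) gives $e_{3,-}^i$ as a (capped) $1$-car train on $\cS_{5,-}$. Now the formula in Definition~\ref{defn:gi},
\[
g_i=\begin{tikzpicture}[baseline = -.1cm]
	\draw (0,-.6)--(0,.6);
	\draw (.5,-.6)--(.5,.6);
	\node at (-.15,.5) {{\scriptsize{$3$}}};
	\node at (-.15,-.5) {{\scriptsize{$3$}}};
	\nbox{unshaded}{(0,0)}{.35}{0}{0}{$e_{3,-}^{i}$};
\end{tikzpicture}
-\left(\frac{2+\sqrt{5}}{\sqrt{7+3\sqrt{5}}}\right)e_{3,-}^i\circ e_{3,-}^i,
\]
exhibits $g_i$ as a linear combination of trains on $\cS_{5,-}$, since trains built from trains are trains. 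Finally $e_{4,-}^1=g_1-e_{4,-}^2$ and $e_{4,-}^4=g_2-e_{4,-}^3$. That is the entire paper proof: four lines of partial traces and one subtraction, no reduction lemma, no $\check A$.

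\textbf{Specific errors in your sketch.} First, your displayed identity for $e_{4,-}^{i+1}$ carries a residual ``cap-closure of $e_{4,-}^{i+1}\circ e_{4,-}^{i+1}$'' term; this is unnecessary because the partial-trace identity in Definition~\ref{defn:S5Minus} already gives $e_{4,-}^{i+1}$ on the nose. Second, your application of Lemma~\ref{lem:ReducedTrains} asserts $\cS_{\le 3}=\emptyset$, which is false: $\cS_{3,+}=\{T\}$ (and correspondingly $\check T$ on the minus side), so a reduced $2$-car train in the $4$-box space need not be Temperley--Lieb. Third, the sentence ``This expresses $e_{4,-}^{i+1}$, hence $g_i$'' is a non sequitur: $g_1=e_{4,-}^1+e_{4,-}^2$ and $g_2=e_{4,-}^3+e_{4,-}^4$, so knowing $e_{4,-}^{i+1}$ alone does not give $g_i$. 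Fourth, both of your routes to $g_i\check A g_i$ are either circular (you invoke ``upgrading $\cS_{4,-}$-trains to $\cS_{5,-}$-trains'' while that is exactly the lemma being proved) or speculative (there is no recorded formula expressing $g_i\check A g_i$ as a partial trace of products of $e_{5,-}^{i,j}$; those off-diagonal matrix units are defined via $\cF^5$, not via $\check A$). The paper sidesteps $\check A$ entirely by using the subtraction $g_i-e_{4,-}^{i+1}$.
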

\begin{proof}
We see that successive partial traces of the $e_{5,-}^{i,i}$ give nonzero multiples of $e_{4,-}^{i+1}$ and $e_{3,-}^i$ by Definitions \ref{defn:S5Minus} and Lemma \ref{lem:S4Minus},
$$
\begin{tikzpicture}[baseline = -.1cm]
	\draw (0,-.6)--(0,.6);
	\draw (.25,.35) arc (180:0:.1cm) -- (.45,-.35) arc (0:-180:.1cm);
	\node at (-.15,.5) {{\scriptsize{$4$}}};
	\node at (-.15,-.5) {{\scriptsize{$4$}}};
	\nbox{unshaded}{(0,0)}{.35}{0}{0}{$e_{5,-}^{i,i}$};
\end{tikzpicture}
=
\left(\frac{\sqrt{3+\sqrt{5}}}{\frac{1}{2}(3+\sqrt{5})}\right)
e_{4,-}^{i+1}
\text{ and }
\begin{tikzpicture}[baseline = -.1cm]
	\draw (0,-.6)--(0,.6);
	\draw (.25,.35) arc (180:0:.1cm) -- (.45,-.35) arc (0:-180:.1cm);
	\node at (-.15,.5) {{\scriptsize{$3$}}};
	\node at (-.15,-.5) {{\scriptsize{$3$}}};
	\nbox{unshaded}{(0,0)}{.35}{0}{0}{$e_{4,-}^{i+1}$};
\end{tikzpicture}
=
\left(\frac{\frac{1}{2}(3+\sqrt{5})}{\sqrt{7+3\sqrt{5}}}\right)
e_{3,-}^{i}
\,.
$$
Hence $e_{4,-}^{i+1}\in \spann(\trains_{4,-}(\cS_{5,-}))$ and $e_{3,-}^{i+1}\in \spann(\trains_{3,-}(\cS_{5,-}))$.
By Definition \ref{defn:gi},
$$
g_i = 
\begin{tikzpicture}[baseline = -.1cm]
	\draw (0,-.6)--(0,.6);
	\draw (.5,-.6)--(.5,.6);
	\node at (-.15,.5) {{\scriptsize{$3$}}};
	\node at (-.15,-.5) {{\scriptsize{$3$}}};
	\nbox{unshaded}{(0,0)}{.35}{0}{0}{$e_{3,-}^{i}$};
\end{tikzpicture}
-
\left(\frac{2+\sqrt{5}}{\sqrt{7+3\sqrt{5}}}\right)
e_{3,-}^i\circ e_{3,-}^i
\in \spann(\trains_{4,-}(\cS_{5,-})),
$$
and thus so are $g_i - e_{4,-}^{i+1}$, i.e., $e_{4,-}^1,e_{4,-}^4\in \spann(\trains_{5,-}(\cS_{5,-}))$.
\end{proof}

\begin{prop}\label{prop:1StrandPlus}
For all $s\in \cS_{4,+}$, 
$
\begin{tikzpicture}[baseline = -.1cm]
	\draw (0,-.6)--(0,.6);
	\fill[shaded] (-.5,-.6)--(-.5,.6) -- (-.65,.6) -- (-.65,-.6);
	\draw (-.5,-.6)--(-.5,.6);
	\node at (.15,.5) {{\scriptsize{$4$}}};
	\node at (.15,-.5) {{\scriptsize{$4$}}};
	\nbox{unshaded}{(0,0)}{.35}{0}{0}{$s$};
\end{tikzpicture}
\in \spann(\trains_{5,-}(\cS_{5,-})).
$
\end{prop}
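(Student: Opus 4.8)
The plan is to trade the strand on the left for a strand on the right — the side around which the definitions of $\cS_{5,\pm}$ are organized — and then feed the result through the rotation lemmas and Lemma~\ref{lem:TrainUp}. Since $\cS_{4,+}=\{e_{4,+}^{1,1},e_{4,+}^{1,2},e_{4,+}^{2,1},e_{4,+}^{2,2}\}$ is finite, it is enough to treat $s=e_{4,+}^{i,j}$. Write $L(s)\in\cP_{5,-}$ for the element in the statement (a single through-strand adjoined on the far left of $s$), and $x\tensor\id_1\in\cP_{5,+}$ for a through-strand adjoined on the far right, as in Definition~\ref{defn:5boxes}.

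The first step is the planar identity
$$
L(s)=\cF^{5}\!\left(\cF^{4}(s)\tensor\id_1\right),
$$
where $\cF^{4}$ is the $\pi$-rotation on $\cP_{4,\pm}$ and $\cF^{5}\colon\cP_{5,+}\to\cP_{5,-}$ is the $\pi$-rotation (we use freely that the full rotations $\cF^{8}$ on $\cP_{4,\bullet}$ and $\cF^{10}$ on $\cP_{5,\bullet}$ are the identity, so $\cF^{4}=\cF^{-4}$ and $\cF^{5}=\cF^{-5}$). One checks this by drawing both diagrams: rotating an $(n{+}1)$-box by half of its boundary carries the leftmost through-strand onto the rightmost position while applying the half-rotation $\cF^{n}$ to the $n$-box it encloses. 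By Lemma~\ref{lem:4ClickRotation} the element $\cF^{4}(s)$ is an explicit linear combination of $e_{4,+}^{1,1},e_{4,+}^{1,2},e_{4,+}^{2,1},e_{4,+}^{2,2}$, so by linearity it suffices to prove $\cF^{5}(e_{4,+}^{k,l}\tensor\id_1)\in\spann(\trains_{5,-}(\cS_{5,-}))$ for all $k,l\in\{1,2\}$.

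Next, Definition~\ref{defn:5boxes} writes $e_{4,+}^{k,l}\tensor\id_1$ as $e_{5,+}^{k,l}$ plus a fixed linear combination of the $\circ$-products $e_{4,+}^{a,b}\circ e_{4,+}^{c,d}$ (which are two-car trains on $\cS_{4,+}$), so it is enough to handle the two summands separately. For the first: Lemma~\ref{lem:5ClickRotation} and Definition~\ref{defn:S5Minus} together say that $\cF^{5}$ carries $\spann(\cS_{5,+})$ onto $\spann(\cS_{5,-})$ (it is injective, $\cF^{10}=\id$, and both spaces are $4$-dimensional), so $\cF^{5}(e_{5,+}^{k,l})\in\spann(\cS_{5,-})\subseteq\spann(\trains_{5,-}(\cS_{5,-}))$. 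For the second: from the definition $X\star Y=\cF(\cF(X)\circ\cF(Y))$ one reads off the planar identity $\cF(X\circ Y)=\cF^{-1}(X)\star\cF^{-1}(Y)$, and hence, after the remaining rotations, $\cF^{5}(e_{4,+}^{a,b}\circ e_{4,+}^{c,d})$ is obtained from $\cF^{-1}(e_{4,+}^{a,b})$ and $\cF^{-1}(e_{4,+}^{c,d})$ by $\star$ followed by a four-click rotation. By Corollary~\ref{cor:RotationInTrains} these two one-click rotations already lie in $\spann(\trains_{4,-}(\cS_{4,-}))$; using that the $e_{4,-}^{m}$ are $\cF^{4}$-eigenvectors (Lemma~\ref{lem:S4Minus}), so that the subsequent rotations keep every car adjacent to the external boundary with its star aligned, one lands in $\spann(\trains_{5,-}(\cS_{4,-}))$. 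Finally, Lemma~\ref{lem:TrainUp} rewrites each car $e_{4,-}^{m}$ appearing in such a train as a linear combination of trains on $\cS_{5,-}$ in $\cP_{4,-}$; substituting these into the car-slots and amalgamating the Temperley--Lieb pieces produces the desired linear combination of trains on $\cS_{5,-}$ in $\cP_{5,-}$.

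The delicate point is the treatment of the $\circ$-products in the last paragraph: one must track carefully how the five-click rotation interacts with $\circ$ and $\star$, and in particular verify that at no stage does a rotation float a car off the external boundary or misalign a star — this is exactly where the eigenvector behaviour of the matrix-unit systems under the relevant powers of $\cF$ is needed, and why the earlier rotation lemmas were set up the way they were. Everything else is bookkeeping with the explicit rotation matrices of Lemmas~\ref{lem:4ClickRotation} and~\ref{lem:5ClickRotation}.
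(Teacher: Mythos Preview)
Your overall strategy is identical to the paper's: use $\cF^4$ to swap the left strand for a right strand, invoke Definition~\ref{defn:5boxes} to split $e_{4,+}^{k,l}\tensor\id_1$ into $e_{5,+}^{k,l}$ plus $\circ$-products, dispose of the first summand via Lemma~\ref{lem:5ClickRotation} and Definition~\ref{defn:S5Minus}, and then argue that $\cF^5(s\circ t)$ lands in trains on $\cS_{5,-}$. That much is fine.

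The gap is in your handling of the $\circ$-products. You rewrite $\cF^5(X\circ Y)=\cF^4\big(\cF^{-1}(X)\star\cF^{-1}(Y)\big)$ and then invoke Lemma~\ref{lem:S4Minus} to say ``the $e_{4,-}^m$ are $\cF^4$-eigenvectors, so the subsequent rotations keep every car adjacent to the external boundary with its star aligned.'' But the $\cF^4$ in Lemma~\ref{lem:S4Minus} is the half-rotation on $\cP_{4,-}$, whereas the $\cF^4$ you are now applying acts on a $5$-box and is \emph{not} a half-rotation there; it does not simply apply the $4$-box $\cF^4$ to each car. There is no reason for a $4$-click rotation of a $5$-box built from a $\star$-product of trains to remain a train. (Indeed, unwinding your substitution just returns you to $\cF^5(X\circ Y)$ without having exhibited any train structure.) Moreover, the $\star$-product of two elements of $\spann(\trains_{4,-}(\cS_{4,-}))$ is not itself a train in any evident way: $\star$ involves internal one-click rotations that misalign stars.

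The paper avoids this detour entirely. It isotopes $\cF^5(s\circ t)$ directly into the specific two-car diagram
\[
\begin{tikzpicture}[baseline = -.4cm]
	\fill[shaded] (-.5,-.9) -- (-.7,-.9) -- (-.7,.6) -- (1.7,.6) -- (1.7,-.9) -- (1.5,-.9) -- (1.2,0) -- (.8,0) -- (.8,-.6) arc (0:-180:.3cm) -- (.2,0) -- (-.2,0) -- (-.5,-.9);
	\draw (.2,0) -- (.2,-.6) arc (180:360:.3cm) -- (.8,0);
	\draw (-.2,0) -- (-.5,-.9);
	\draw (0,0) -- (0,-.9);
	\draw (1.2,0) -- (1.5,-.9);
	\draw (1,0) -- (1,-.9);
	\ncircle{unshaded}{(0,0)}{.3}{-120}{$t$}
	\ncircle{unshaded}{(1,0)}{.3}{-60}{$s$}
	\draw[dashed] (-.45,-.45) rectangle (.45,.45);
	\draw[dashed] (.55,-.45) rectangle (1.45,.45);
\end{tikzpicture}
\]
and observes that the dashed boxes are precisely $\cF^{-1}(t)$ and $\cF(s)$ viewed as standard $4$-boxes. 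Now Corollary~\ref{cor:RotationInTrains} applies directly to each single car, and Lemma~\ref{lem:TrainUp} lifts to $\cS_{5,-}$. No five-box rotation needs to be pushed through a multi-car train; the isotopy already produces the train shape, and only the individual $4$-boxes need rewriting.
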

\begin{proof}
By Lemma \ref{lem:4ClickRotation}, $\cF^4$ preserves $\spann(\cS_{4,+})$.
Applying $\cF^5$ to rotate by $\pi$, it suffices to show that for each $s\in \cS_{4,+}$,
$$
\begin{tikzpicture}[baseline = -.1cm]
	\draw (0,-.6)--(0,.6);
	\fill[shaded] (.5,-.6)--(.5,.6) -- (.65,.6) -- (.65,-.6);
	\draw (.5,-.6)--(.5,.6);
	\node at (-.15,.5) {{\scriptsize{$4$}}};
	\node at (-.15,-.5) {{\scriptsize{$4$}}};
	\nbox{unshaded}{(0,0)}{.35}{0}{0}{$s$};
\end{tikzpicture}
\in \cF^5(\spann(\trains_{5,-}(\cS_{5,-}))).
$$
By taking adjoints and multiplying relations, it suffices to consider $s=e_{4,+}^{1,2}$.
Now by Definition \ref{defn:5boxes}, we know that
$$
\begin{tikzpicture}[baseline = -.1cm]
	\draw (0,-.6)--(0,.6);
	\fill[shaded] (.5,-.6)--(.5,.6) -- (.65,.6) -- (.65,-.6);
	\draw (.5,-.6)--(.5,.6);
	\node at (-.15,.5) {{\scriptsize{$4$}}};
	\node at (-.15,-.5) {{\scriptsize{$4$}}};
	\nbox{unshaded}{(0,0)}{.35}{0}{0}{$e_{4,+}^{1,2}$};
\end{tikzpicture}
=
e_{5,+}^{1,2}+
\left(\frac{\sqrt{7+3\sqrt{5}}}{2+\sqrt{5}}\right)
\left(
(e_{4,+}^{1,1}+e_{4,+}^{1,2})\circ (e_{4,+}^{1,2}+e_{4,+}^{2,2})
\right).
$$
By Definition \ref{defn:S5Minus}, $\cF^5$ maps $\spann(\cS_{5,+})$ to $\spann(\cS_{5,-})$, so $\cF^5(e_{5,+}^{1,2})\in \spann(\cS_{5,-})$.
Hence it suffices to show that for each $s,t\in \cS_{4,+}$, $\cF^5(s\circ t) \in\spann(\trains_{5,-}(\cS_{5,-}))$.
Isotoping $\cF^5(s \circ t)$, we have
$$
\cF^5(s\circ t) = 
\cF^5\left(
\begin{tikzpicture}[baseline = -.1cm]
	\fill[shaded] (0,.15) -- (.15,.15) arc (-180:0:.1cm) -- (.35,.9) -- (.5,.9) -- (.5,-.9) -- (.35,-.9) -- (.35,-.15) arc (0:180:.1cm) -- (0,-.15);
	\draw (0,-.9)--(0,.9);
	\node at (-.15,.8) {{\scriptsize{$4$}}};
	\node at (-.15,-.8) {{\scriptsize{$4$}}};
	\node at (-.15,0) {{\scriptsize{$3$}}};
	\draw (.15,.15) arc (-180:0:.1cm) -- (.35,.9);
	\draw (.15,-.15) arc (180:0:.1cm) -- (.35,-.9);
	\nbox{unshaded}{(0,.4)}{.25}{0}{0}{$t$};
	\nbox{unshaded}{(0,-.4)}{.25}{0}{0}{$s$};
\end{tikzpicture}
\right)
=
\begin{tikzpicture}[baseline = -.1cm, xscale=-1]
	\fill[shaded] (0,.15) -- (.15,.15) arc (-180:0:.1cm) -- (.35,.9) -- (.5,.9) -- (.5,-.9) -- (.35,-.9) -- (.35,-.15) arc (0:180:.1cm) -- (0,-.15);
	\draw (0,-.9)--(0,.9);
	\node at (-.15,.8) {{\scriptsize{$4$}}};
	\node at (-.15,-.8) {{\scriptsize{$4$}}};
	\node at (-.15,0) {{\scriptsize{$3$}}};
	\draw (.15,.15) arc (-180:0:.1cm) -- (.35,.9);
	\draw (.15,-.15) arc (180:0:.1cm) -- (.35,-.9);
	\nbox{unshaded}{(0,.4)}{.25}{0}{0}{$s$};
	\nbox{unshaded}{(0,-.4)}{.25}{0}{0}{$t$};
\end{tikzpicture}
=
\begin{tikzpicture}[baseline = -.4cm]
	\fill[shaded] (-.5,-.9) -- (-.7,-.9) -- (-.7,.6) -- (1.7,.6) -- (1.7,-.9) -- (1.5,-.9) -- (1.2,0) -- (.8,0) -- (.8,-.6) arc (0:-180:.3cm) -- (.2,0) -- (-.2,0) -- (-.5,-.9);
	\draw (.2,0) -- (.2,-.6) arc (180:360:.3cm) -- (.8,0);
	\draw (-.2,0) -- (-.5,-.9);
	\draw (0,0) -- (0,-.9);
	\draw (1.2,0) -- (1.5,-.9);
	\draw (1,0) -- (1,-.9);
	\ncircle{unshaded}{(0,0)}{.3}{-120}{$t$}
	\ncircle{unshaded}{(1,0)}{.3}{-60}{$s$}
	\node at (.5,-1.05) {{\scriptsize{$3$}}};
	\node at (0,-1.05) {{\scriptsize{$4$}}};
	\node at (1,-1.05) {{\scriptsize{$4$}}};
	\draw[dashed] (-.45,-.45) rectangle (.45,.45);
	\draw[dashed] (.55,-.45) rectangle (1.45,.45);
\end{tikzpicture}\,.
$$
Now to get an element in the span of the trains, we need to rotate $s,t$ by one click.
The dashed boxes above may be replaced with  $\cF(s)$ and $\cF^{-1}(t)$, and $\cF(s),\cF^{-1}(t)\in \spann(\trains_{5,-}(\cS_{5,-}))$ by Corollary \ref{cor:RotationInTrains} and Lemma \ref{lem:TrainUp}.
\end{proof}

\begin{prop}\label{prop:1StrandMinus4}
For all $s\in \cS_{4,-}$, 
\,$
\begin{tikzpicture}[baseline = -.1cm]
	\fill[shaded] (-.5,-.6)--(-.5,.6) -- (.5,.6) -- (.5,-.6);
	\draw (0,-.6)--(0,.6);
	\draw (-.5,-.6)--(-.5,.6);
	\node at (.15,.5) {{\scriptsize{$4$}}};
	\node at (.15,-.5) {{\scriptsize{$4$}}};
	\nbox{unshaded}{(0,0)}{.35}{0}{0}{$s$};
\end{tikzpicture}
\in \spann(\trains_{5,+}(\cS_{4,+})).
$
\end{prop}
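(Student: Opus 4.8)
The statement is the shading-reversed analogue of Proposition \ref{prop:1StrandPlus}, and the plan is to run the same argument with the roles of $+$ and $-$ interchanged. Write $\iota_R,\iota_L$ for the operation of adding a through-strand on the far right, resp.\ far left, of a box. First, by Lemma \ref{lem:S4Minus} the half-rotation $\cF^4$ permutes $\cS_{4,-}$, so it fixes $\spann(\cS_{4,-})$; since $\cF^{10}=\id$, the rotation $\cF^5\colon\cP_{5,+}\to\cP_{5,-}$ is invertible, and turning the defining picture upside down carries $\iota_L(s)$ to $\iota_R(\cF^4(s))$. Hence it suffices to show $\iota_R(s)\in\cF^5(\spann(\trains_{5,+}(\cS_{4,+})))$ for each $s\in\cS_{4,-}$. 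Note that, in contrast with Proposition \ref{prop:1StrandPlus}, the target space is generated by $\cS_{4,+}$ rather than $\cS_{5,+}$; this is harmless, because Definition \ref{defn:5boxes} writes each $e_{5,+}^{i,j}$ as $\iota_R$ of an element of $\cS_{4,+}$ plus $\circ$-products of elements of $\cS_{4,+}$, so $\cS_{5,+}\subset\spann(\trains_{5,+}(\cS_{4,+}))$ --- in particular no ``train-up'' step like Lemma \ref{lem:TrainUp} is needed (nor, since the target uses $\cS_{4,+}$, is one available).

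Next I would cut down the list of $s$ to be checked. By Definition \ref{defn:gi}, $g_1=e_{4,-}^1+e_{4,-}^2$ and $g_2=e_{4,-}^3+e_{4,-}^4$, so it is enough to handle $e_{4,-}^2$, $e_{4,-}^3$, $g_1$, $g_2$ (then $e_{4,-}^1=g_1-e_{4,-}^2$ and $e_{4,-}^4=g_2-e_{4,-}^3$). For $e_{4,-}^{i+1}$ with $i=1,2$, Definition \ref{defn:4minus} rearranges to $\iota_R(e_{4,-}^{i+1})=e_{5,-}^{i,i}+(\text{const})\,e_{4,-}^{i+1}\circ e_{4,-}^{i+1}$; by Lemma \ref{lem:5ClickRotation} and Definition \ref{defn:S5Minus} every $e_{5,-}^{p,q}$ lies in $\cF^5(\spann(\cS_{5,+}))\subseteq\cF^5(\spann(\trains_{5,+}(\cS_{4,+})))$, so only the $\circ$-term survives. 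For $g_i$, Definition \ref{defn:gi} (used at the $4$-box level and then hit with $\iota_R$) writes $\iota_R(g_i)$ as $\iota_R^2(e_{3,-}^i)$ minus a constant times $\iota_R(e_{3,-}^i\circ e_{3,-}^i)$, and $\iota_R^2(e_{3,-}^i)=\tfrac12(\iota_R^2(\check{f}^{(3)})\pm\iota_R^2(\check{T}))$ breaks into a Temperley--Lieb piece and a multi-through-strand image of $\check{T}=\cF(T)$.

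Every residual term is now of one of two kinds. The $\circ$-products are dealt with exactly as in Proposition \ref{prop:1StrandPlus}: isotope $\cF^5$ of the product so that its two factors appear as $\cF$ and $\cF^{-1}$ of the original factors, and then apply Corollary \ref{cor:RotationInTrains}, which is stated symmetrically in $\pm$ and so places $\cF^{\pm1}(\cS_{4,-})$ --- and, in the $e_{3,-}^i\circ e_{3,-}^i$ term, $\cF^{\pm1}(e_{3,-}^i)=\tfrac12(\text{Temperley--Lieb}\pm T)$ via $\check{T}=\cF(T)$, $\omega_T=1$ (Lemma \ref{lem:Eigenvalue}), and $T\in\spann(\trains_{3,+}(\cS_{4,+}))$ from the proof of Lemma \ref{lem:BInTrains} --- into $\spann(\trains_{5,+}(\cS_{4,+}))$. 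The surviving multi-through-strand images of $\check{T}$ are annular consequences of $T$; one rewrites them through $\cF^5$ and finishes using $\cup_j(T)\in\spann(\trains_{4,+}(\cS_{4,+}))$ from Lemma \ref{lem:BInTrains}, the explicit reductions of Lemmas \ref{lem:1ClickOnS4} and \ref{lem:Cup0InTrains}, the explicit rotation matrices of Lemmas \ref{lem:4ClickRotation} and \ref{lem:6ClickRotation}, and stability of the span of trains on $\cS_{4,+}$ under the cappings allowed by property (3) of Theorem \ref{thm:EvaluableSubalgebra}. I expect the only real difficulty to be diagrammatic bookkeeping rather than anything conceptual: keeping the shadings and the positions of the $\star$'s straight when turning $\iota_L(s)$ upside down by $\cF^5$ and when isotoping $\cF^5$ of the various $\circ$-products and of $\iota_R(g_i)$, and remembering throughout that the target is spanned by trains on $\cS_{4,+}$, not on $\cS_{5,+}$. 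Beyond getting those pictures right, there is no new idea past the argument already given for Proposition \ref{prop:1StrandPlus}.
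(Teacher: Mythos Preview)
Your reduction to $\iota_R(s)\in\cF^5(\spann(\trains_{5,+}(\cS_{4,+})))$ and your treatment of $e_{4,-}^2,e_{4,-}^3$ via Definition~\ref{defn:4minus} match the paper exactly, and you are right that $\cS_{5,+}\subset\spann(\trains_{5,+}(\cS_{4,+}))$ so no analogue of Lemma~\ref{lem:TrainUp} is needed.

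The gap is in your handling of $e_{4,-}^1,e_{4,-}^4$ through the $g_i$. After the $\cF^5$ rotation, the term $\iota_R^2(e_{3,-}^i)$ becomes $\iota_L^2(e_{3,+}^i)$, and the term $\iota_R(e_{3,-}^i\circ e_{3,-}^i)$ becomes $\iota_L$ applied to a $4$-box. In both cases you are left with an $\iota_L$ sitting outside something you can write as a train on $\cS_{4,\pm}$, and $\iota_L$ of a train is \emph{not} a train: the extra through-strand on the far left separates the external $\star$ from the generators' $\star$'s. Concretely, you need $\iota_L^2(T)\in\spann(\trains_{5,+}(\cS_{4,+}))$; this is a $5$-box annular consequence of $T$ that is not in $\iota_R(\gA_{4,+}(T))$, so Lemma~\ref{lem:BInTrains} (which only controls $4$-box annular consequences) does not reach it. The lemmas you cite --- \ref{lem:1ClickOnS4}, \ref{lem:Cup0InTrains}, \ref{lem:4ClickRotation}, \ref{lem:6ClickRotation}, and property~(3) of Theorem~\ref{thm:EvaluableSubalgebra} --- are all about cappings or rotations within a fixed box-size, not about pushing generators under an added left strand. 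Getting rid of that $\iota_L$ is precisely a one-strand jellyfish relation, and the only such relation available so far is Proposition~\ref{prop:1StrandPlus}, which goes the wrong way (it lands in $\trains_{5,-}(\cS_{5,-})$, not $\trains_{5,+}(\cS_{4,+})$). So the argument is circular as written.

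The paper avoids this entirely by a structural observation you did not use: in the dual graph the vertices for $e_{4,-}^1$ and $e_{4,-}^4$ have no neighbours at depth~$5$. Hence by positivity and a dimension count, $\iota_R(e_{4,-}^j)$ for $j=1,4$ lies entirely in the basic-construction ideal and equals a scalar times $e_{4,-}^j\circ e_{4,-}^j$. That reduces \emph{every} case to a $\circ$-product of genuine $4$-boxes in $\cS_{4,-}$, and those are dispatched exactly as at the end of Proposition~\ref{prop:1StrandPlus} via Corollary~\ref{cor:RotationInTrains}. No $3$-box $\circ$-products, no stray $\iota_L$, no $g_i$ detour.
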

\begin{proof}
Applying $\cF^4$ to rotate by $\pi$ induces the duality on $\cS_{4,-}$, which we know by Lemma \ref{lem:S4Minus}.
Hence it suffices to show that for each $s\in \cS_{4,-}$,
$$
\begin{tikzpicture}[baseline = -.1cm]
	\fill[shaded] (-.5,-.6)--(-.5,.6) -- (.5,.6) -- (.5,-.6);
	\draw (0,-.6)--(0,.6);
	\draw (.5,-.6)--(.5,.6);
	\node at (-.15,.5) {{\scriptsize{$4$}}};
	\node at (-.15,-.5) {{\scriptsize{$4$}}};
	\nbox{unshaded}{(0,0)}{.35}{0}{0}{$s$};
\end{tikzpicture}\,
\in \cF^5(\spann(\trains_{5,+}(\cS_{4,+}))).
$$
By Definition \ref{defn:4minus}, for $e_{4,-}^2,e_{4,-}^3$, we have (when $i=1,2$)
$$
\begin{tikzpicture}[baseline = -.1cm]
	\fill[shaded] (-.5,-.6)--(-.5,.6) -- (.5,.6) -- (.5,-.6);
	\draw (0,-.6)--(0,.6);
	\draw (.5,-.6)--(.5,.6);
	\node at (.15,.5) {{\scriptsize{$4$}}};
	\node at (.15,-.5) {{\scriptsize{$4$}}};
	\nbox{unshaded}{(0,0)}{.35}{0}{0}{$e_{4,-}^{i+1}$};
\end{tikzpicture}\,
=
e_{5,-}^{i,i} +
\left(\frac{\sqrt{7+3\sqrt{5}}}{\frac{1}{2}(3+\sqrt{5})}\right)
\left(
e_{4,-}^{i+1}\circ e_{4,-}^{i+1}
\right).
$$
We know that $\cF^5(e_{5,-}^{i,i})\in \spann(\cS_{5,+})\subset \spann(\trains_{5,+}(\cS_{4,+}))$ by Lemma \ref{lem:5ClickRotation} and Definition \ref{defn:5boxes}.
By positivity and taking dimensions, for $s=e_{4,-}^1,e_{4,-}^4$, we have
$$
\begin{tikzpicture}[baseline = -.1cm]
	\fill[shaded] (-.5,-.6)--(-.5,.6) -- (.5,.6) -- (.5,-.6);
	\draw (0,-.6)--(0,.6);
	\draw (.5,-.6)--(.5,.6);
	\node at (.15,.5) {{\scriptsize{$4$}}};
	\node at (.15,-.5) {{\scriptsize{$4$}}};
	\nbox{unshaded}{(0,0)}{.35}{0}{0}{$s$};
\end{tikzpicture}\,
=
\left(\frac{\sqrt{7+3\sqrt{5}}}{\frac{1}{2}(1+\sqrt{5})}\right)
(s\circ s).
$$
We are finished by an argument similar to the end of the proof of Proposition \ref{prop:1StrandPlus}
\end{proof}

\begin{prop}\label{prop:1StrandMinus5}
For all $s\in \cS_{5,-}$, 
\,$
\begin{tikzpicture}[baseline = -.1cm]
	\fill[shaded] (-.5,-.6)--(-.5,.6) -- (0,.6) -- (0,-.6);
	\draw (0,-.6)--(0,.6);
	\draw (-.5,-.6)--(-.5,.6);
	\node at (.15,.5) {{\scriptsize{$5$}}};
	\node at (.15,-.5) {{\scriptsize{$5$}}};
	\nbox{unshaded}{(0,0)}{.35}{0}{0}{$s$};
\end{tikzpicture}
\in \spann(\trains_{6,+}(\cS_{4,+})).
$
\end{prop}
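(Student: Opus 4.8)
The plan is to run the argument of Propositions~\ref{prop:1StrandPlus} and~\ref{prop:1StrandMinus4} one depth higher, at the $5$-box level. Write $L(x)$, respectively $R(x)$, for the element obtained from $x$ by adding a through-strand on the left, respectively on the right, so that the claim is $L(s)\in\spann(\trains_{6,+}(\cS_{4,+}))$ for each $s\in\cS_{5,-}$. As in Proposition~\ref{prop:1StrandPlus}, apply $\cF^6$ to rotate $L(s)$ by $\pi$: this moves the added strand to the right and rotates $s$ to $\cF^5(s)$, which by Lemma~\ref{lem:5ClickRotation} together with Definition~\ref{defn:S5Minus} lies in $\spann(\cS_{5,+})$. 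Hence it suffices to show that for each $s'\in\cS_{5,+}$ one has $R(s')\in\cF^6\bigl(\spann(\trains_{6,+}(\cS_{4,+}))\bigr)$, equivalently $\cF^6(R(s'))\in\spann(\trains_{6,+}(\cS_{4,+}))$.

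By Definition~\ref{defn:6boxes}, $R(e_{5,+}^{i,j})$ equals $e_{6,+}^{i,j}$ plus an explicit scalar multiple of a single composition $e_{5,+}^{a,b}\circ e_{5,+}^{c,d}$ (the case $s'=e_{5,+}^{2,1}$ following by taking adjoints). The term $e_{6,+}^{i,j}$ is carried into $\spann(\cS_{6,+})$ by $\cF^6$ (Lemma~\ref{lem:6ClickRotation}), and $\spann(\cS_{6,+})\subseteq\spann(\trains_{6,+}(\cS_{4,+}))$ because, cascading Definitions~\ref{defn:6boxes} and~\ref{defn:5boxes}, every $e_{6,+}^{i,j}$ is built from the elements of $\cS_{4,+}$ using only additions of through-strands and $\circ$-compositions, each of which sends spans of trains to spans of trains on the same generators (cf.\ the proof of Lemma~\ref{lem:BInTrains}). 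So the only genuinely new point is to show $\cF^6\bigl(e_{5,+}^{a,b}\circ e_{5,+}^{c,d}\bigr)\in\spann(\trains_{6,+}(\cS_{4,+}))$.

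This is the heart of the matter, and it is handled exactly as at the end of the proof of Proposition~\ref{prop:1StrandPlus}. One isotopes $\cF^6\bigl(e_{5,+}^{a,b}\circ e_{5,+}^{c,d}\bigr)$ until the two $5$-boxes lie side by side against the outer boundary, separated only by a Temperley-Lieb tangle; in the course of this isotopy each $5$-box is rotated by a single click, so the picture becomes a single Temperley-Lieb tangle into which $\cF(e_{5,+}^{a,b})$ and $\cF^{-1}(e_{5,+}^{c,d})$ have been plugged. By Definition~\ref{defn:5boxes} each $e_{5,+}^{i,j}$ is a span of one- and two-car trains on $\cS_{4,+}$, and the one-click rotation of such a train is resolved into a span of trains by the depth-$4$ results Corollary~\ref{cor:RotationInTrains}, Lemma~\ref{lem:1ClickOnS4} and Lemma~\ref{lem:BInTrains}, exactly as in Proposition~\ref{prop:1StrandPlus}; substituting these expansions back into the isotoped picture exhibits the composition, and hence $R(s')$, as the required span of trains.

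The main obstacle is precisely the one already overcome for Proposition~\ref{prop:1StrandPlus}: keeping careful track throughout the isotopy of the shadings, of the position of the $\star$ on each box, and of which Temperley-Lieb caps are created, so that the rotated $5$-boxes can genuinely be replaced by their train expansions without breaking the train structure. Once that bookkeeping is transported up one depth, no computation is needed beyond the structure constants already recorded in Sections~\ref{sec:3boxes}--\ref{sec:6boxes}; in particular, as with every lemma after Lemma~\ref{lem:ParameterFamilies}, the conclusion is independent of the parameter $\lambda$.
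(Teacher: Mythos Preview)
Your overall strategy matches the paper's: reduce via the $\pi$-rotation to $R(s')$ for $s'\in\cS_{5,+}$, split off the $e_{6,+}^{i,j}$ term using Definition~\ref{defn:6boxes} and Lemma~\ref{lem:6ClickRotation}, and then analyse $\cF^6(s\circ t)$ for $s,t\in\cS_{5,+}$ by expanding $s,t$ as trains on $\cS_{4,+}$. Where your argument falls short is the very last step.

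After the isotopy and expansion you describe, the internal $4$-boxes have been rotated by a single click, and Corollary~\ref{cor:RotationInTrains} expresses each $\cF^{\pm1}(x)$ for $x\in\cS_{4,+}$ as a linear combination of trains on $\cS_{4,-}$, not on $\cS_{4,+}$. Substituting these back produces a diagram in $\cP_{6,+}$ populated by $\cS_{4,-}$ generators; since their $\star$'s sit in shaded regions, such a diagram is \emph{not} a train on $\cS_{4,+}$. Your appeal to ``exactly as in Proposition~\ref{prop:1StrandPlus}'' does not close this gap: there the ambient space was $\cP_{5,-}$ and the target was $\trains_{5,-}(\cS_{5,-})$, so the $\cS_{4,-}$ elements coming out of Corollary~\ref{cor:RotationInTrains} already had the right shading, and Lemma~\ref{lem:TrainUp} only had to promote $\cS_{4,-}$ to $\cS_{5,-}$. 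Here the shadings are mismatched, and none of the results you cite (Corollary~\ref{cor:RotationInTrains}, Lemma~\ref{lem:1ClickOnS4}, Lemma~\ref{lem:BInTrains}) convert $\cS_{4,-}$ generators into $\cS_{4,+}$-trains.

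The missing ingredient is Proposition~\ref{prop:1StrandMinus4}: for each $s\in\cS_{4,-}$ one has $L(s)\in\spann(\trains_{5,+}(\cS_{4,+}))$. This is exactly what lets one pull the $\cS_{4,-}$ generators through the remaining strand and land in $\trains_{6,+}(\cS_{4,+})$. The paper invokes it explicitly at the end of its proof; once you add that step, your argument is complete and essentially the same as the paper's.
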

\begin{proof}
By Definition \ref{defn:S5Minus}, $\cF^5$ maps $\spann(\cS_{5,-})$ to $\spann(\cS_{5,+})$.
Again, by applying $\cF^5$ to rotate by $\pi$ and by taking adjoints and multiplying, it suffices to show that
$$
\begin{tikzpicture}[baseline = -.1cm]
	\draw (0,-.6)--(0,.6);
	\fill[shaded] (.5,-.6)--(.5,.6) -- (0,.6) -- (0,-.6);
	\draw (.5,-.6)--(.5,.6);
	\node at (-.15,.5) {{\scriptsize{$5$}}};
	\node at (-.15,-.5) {{\scriptsize{$5$}}};
	\nbox{unshaded}{(0,0)}{.35}{0}{0}{$e_{5,+}^{1,2}$};
\end{tikzpicture}\,
=
e_{6,+}^{1,2}
+
\left(\frac{2+\sqrt{5}}{\sqrt{3+\sqrt{5}}}\right)
\left(
e_{5,+}^{1,1}\circ e_{5,+}^{1,2}
\right)
\in \cF^6(\spann(\trains_{6,+}(\cS_{4,+}))).
$$
We know $\cF^6(e_{6,+}^{1,2})=-e_{6,+}^{2,1}\in \spann(\trains_{6,+}(\cS_{4,+}))$ by Lemma \ref{lem:6ClickRotation} and Definition \ref{defn:6boxes}.

It remains to examine $\cF^6(s\circ t)$ for $s,t\in \cS_{5,+}\subset \spann(\trains_{5,+}(\cS_{4,+}))$.
Now each $s,t\in \cS_{5,+}$ is a linear combination of trains on $\cS_{4,+}$, and expanding the formulas, we see each $s\circ t$ is a linear combination of diagrams given on the left hand side of Figure \ref{fig:RotateTrains}, where we just write $x$ for some element of $\cS_{4,+}$, and we don't require all the $x$'s to be the same.
Applying $\cF^6$ to rotate by $\pi$, we are left with the diagrams on the right hand side of Figure \ref{fig:RotateTrains}.

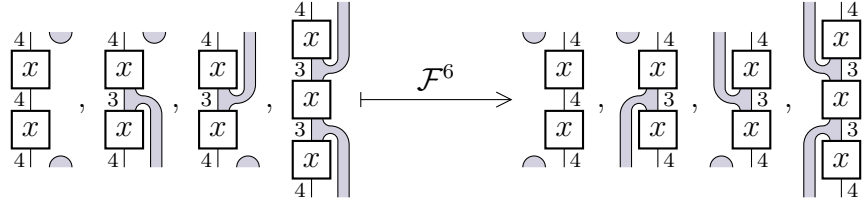
\begin{figure}[!ht]
$$
\begin{tikzpicture}[baseline = -.1cm, xscale=-1]
	\draw (0,.9)--(0,-.9);	
	\node at (.15,.8) {{\scriptsize{$4$}}};
	\node at (.15,0) {{\scriptsize{$4$}}};
	\node at (.15,-.8) {{\scriptsize{$4$}}};
	\filldraw[shaded] (-.25,.9) arc (360:180:.15cm);
	\filldraw[shaded] (-.25,-.9) arc (0:180:.15cm);
	\nbox{unshaded}{(0,-.4)}{.25}{0}{0}{$x$}
	\nbox{unshaded}{(0,.4)}{.25}{0}{0}{$x$}
\end{tikzpicture}
\,,\,
\begin{tikzpicture}[baseline = -.1cm, xscale=-1]
	\draw (0,.9)--(0,.4);
	\draw (0,-.9)--(0,-.4);		
	\node at (.15,.8) {{\scriptsize{$4$}}};
	\node at (.15,0) {{\scriptsize{$3$}}};
	\node at (.15,-.8) {{\scriptsize{$4$}}};
	\filldraw[shaded] (-.25,.9) arc (360:180:.15cm);
	\filldraw[shaded] (-.35,-.9) -- (-.35,-.15) arc (180:0:.1cm) -- (-.15,-.4) -- (0,-.4) -- (0,-.4) -- (0,.4) -- (-.15,.15) arc (0:-90:.1cm) arc (90:180:.25cm) -- (-.5,-.9);
	\nbox{unshaded}{(0,-.4)}{.25}{0}{0}{$x$}
	\nbox{unshaded}{(0,.4)}{.25}{0}{0}{$x$}
\end{tikzpicture}
\,,\,
\begin{tikzpicture}[baseline = -.1cm, yscale=-1, xscale=-1]
	\draw (0,.9)--(0,.4);
	\draw (0,-.9)--(0,-.4);		
	\node at (.15,.8) {{\scriptsize{$4$}}};
	\node at (.15,0) {{\scriptsize{$3$}}};
	\node at (.15,-.8) {{\scriptsize{$4$}}};
	\filldraw[shaded] (-.25,.9) arc (360:180:.15cm);
	\filldraw[shaded] (-.35,-.9) -- (-.35,-.15) arc (180:0:.1cm) -- (-.15,-.4) -- (0,-.4) -- (0,-.4) -- (0,.4) -- (-.15,.15) arc (0:-90:.1cm) arc (90:180:.25cm) -- (-.5,-.9);
	\nbox{unshaded}{(0,-.4)}{.25}{0}{0}{$x$}
	\nbox{unshaded}{(0,.4)}{.25}{0}{0}{$x$}
\end{tikzpicture}
\,,\,
\begin{tikzpicture}[baseline = -.1cm, xscale=-1]
	\draw (0,1.3)--(0,.4);
	\draw (0,-1.3)--(0,-.4);		
	\node at (.15,1.2) {{\scriptsize{$4$}}};
	\node at (.15,.4) {{\scriptsize{$3$}}};
	\node at (.15,-.4) {{\scriptsize{$3$}}};
	\node at (.15,-1.2) {{\scriptsize{$4$}}};
	\filldraw[shaded] (-.35,-1.3) -- (-.35,-.55) arc (180:0:.1cm) -- (-.15,-.8) -- (0,-.8) -- (0,-.8) -- (0,0) -- (-.15,-.25) arc (0:-90:.1cm) arc (90:180:.25cm) -- (-.5,-1.3);
	\filldraw[shaded] (-.35,1.3) -- (-.35,.55) arc (-180:0:.1cm) -- (-.15,.8) -- (0,.8) -- (0,.8) -- (0,0) -- (-.15,.25) arc (0:90:.1cm) arc (-90:-180:.25cm) -- (-.5,1.3);
	\nbox{unshaded}{(0,-.8)}{.25}{0}{0}{$x$}
	\nbox{unshaded}{(0,0)}{.25}{0}{0}{$x$}
	\nbox{unshaded}{(0,.8)}{.25}{0}{0}{$x$}
\end{tikzpicture}
\,\,
\begin{tikzpicture}[baseline = -.1cm]
	\draw (0,0)--(2,0);
	\draw (0,.1)--(0,-.1);
	\draw (1.8,.1) -- (2,0) -- (1.8,-.1);
	\node at (1,.3) {$\cF^6$};		
\end{tikzpicture}
\,\,
\begin{tikzpicture}[baseline = -.1cm]
	\draw (0,.9)--(0,-.9);	
	\node at (.15,.8) {{\scriptsize{$4$}}};
	\node at (.15,0) {{\scriptsize{$4$}}};
	\node at (.15,-.8) {{\scriptsize{$4$}}};
	\filldraw[shaded] (-.25,.9) arc (360:180:.15cm);
	\filldraw[shaded] (-.25,-.9) arc (0:180:.15cm);
	\nbox{unshaded}{(0,-.4)}{.25}{0}{0}{$x$}
	\nbox{unshaded}{(0,.4)}{.25}{0}{0}{$x$}
\end{tikzpicture}
\,,\,
\begin{tikzpicture}[baseline = -.1cm]
	\draw (0,.9)--(0,.4);
	\draw (0,-.9)--(0,-.4);		
	\node at (.15,.8) {{\scriptsize{$4$}}};
	\node at (.15,0) {{\scriptsize{$3$}}};
	\node at (.15,-.8) {{\scriptsize{$4$}}};
	\filldraw[shaded] (-.25,.9) arc (360:180:.15cm);
	\filldraw[shaded] (-.35,-.9) -- (-.35,-.15) arc (180:0:.1cm) -- (-.15,-.4) -- (0,-.4) -- (0,-.4) -- (0,.4) -- (-.15,.15) arc (0:-90:.1cm) arc (90:180:.25cm) -- (-.5,-.9);
	\nbox{unshaded}{(0,-.4)}{.25}{0}{0}{$x$}
	\nbox{unshaded}{(0,.4)}{.25}{0}{0}{$x$}
\end{tikzpicture}
\,,\,
\begin{tikzpicture}[baseline = -.1cm, yscale=-1]
	\draw (0,.9)--(0,.4);
	\draw (0,-.9)--(0,-.4);		
	\node at (.15,.8) {{\scriptsize{$4$}}};
	\node at (.15,0) {{\scriptsize{$3$}}};
	\node at (.15,-.8) {{\scriptsize{$4$}}};
	\filldraw[shaded] (-.25,.9) arc (360:180:.15cm);
	\filldraw[shaded] (-.35,-.9) -- (-.35,-.15) arc (180:0:.1cm) -- (-.15,-.4) -- (0,-.4) -- (0,-.4) -- (0,.4) -- (-.15,.15) arc (0:-90:.1cm) arc (90:180:.25cm) -- (-.5,-.9);
	\nbox{unshaded}{(0,-.4)}{.25}{0}{0}{$x$}
	\nbox{unshaded}{(0,.4)}{.25}{0}{0}{$x$}
\end{tikzpicture}
\,,\,
\begin{tikzpicture}[baseline = -.1cm]
	\draw (0,1.3)--(0,.4);
	\draw (0,-1.3)--(0,-.4);		
	\node at (.15,1.2) {{\scriptsize{$4$}}};
	\node at (.15,.4) {{\scriptsize{$3$}}};
	\node at (.15,-.4) {{\scriptsize{$3$}}};
	\node at (.15,-1.2) {{\scriptsize{$4$}}};
	\filldraw[shaded] (-.35,-1.3) -- (-.35,-.55) arc (180:0:.1cm) -- (-.15,-.8) -- (0,-.8) -- (0,-.8) -- (0,0) -- (-.15,-.25) arc (0:-90:.1cm) arc (90:180:.25cm) -- (-.5,-1.3);
	\filldraw[shaded] (-.35,1.3) -- (-.35,.55) arc (-180:0:.1cm) -- (-.15,.8) -- (0,.8) -- (0,.8) -- (0,0) -- (-.15,.25) arc (0:90:.1cm) arc (-90:-180:.25cm) -- (-.5,1.3);
	\nbox{unshaded}{(0,-.8)}{.25}{0}{0}{$x$}
	\nbox{unshaded}{(0,0)}{.25}{0}{0}{$x$}
	\nbox{unshaded}{(0,.8)}{.25}{0}{0}{$x$}
\end{tikzpicture}
$$
\caption{$s\circ t$ for $s,t\in \cS_{5,+}$ in terms of trains on $\cS_{4,+}$, and their 6-click rotations under $\cF^6$}
\label{fig:RotateTrains}
\end{figure}

To see the last three diagrams are really linear combinations of trains, we just use the formulas for $\cF,\cF^{-1}$ for elements of $\cS_{4,+}$ in terms of trains on $\cS_{4,-}$ afforded by Corollary \ref{cor:RotationInTrains}. Finally, we use Proposition \ref{prop:1StrandMinus4} to pull the elements of $\cS_{4,-}$ through the remaining strand.
\end{proof}

\begin{thm}\label{thm:2Strand}
We have 2-strand relations for the elements of $\cS_{4,+}$, which are independent of the parameter $\lambda\in\bbT$.
\end{thm}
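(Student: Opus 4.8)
The plan is to assemble the two‑strand relation for each $s\in\cS_{4,+}$ out of the one‑strand relations already established, namely Propositions \ref{prop:1StrandPlus} and \ref{prop:1StrandMinus5}, via the usual jellyfish pull‑through. First I would recall that, by item (2) of Theorem \ref{thm:EvaluableSubalgebra}, what must be produced is, for each matrix unit $s\in\cS_{4,+}$, an expression for the two‑strand jellyfish configuration — $s$ together with two extra strands on the non‑starred side, viewed as an element of $\cP_{6,+}$ — as an element of $\spann(\trains_{6,+}(\cS_{\leq 4}))$. Since $\cS_{j,+}=\eset$ for $j\le 2$ and $\cS_{4,+}\subseteq\cS_{\leq 4}$, it suffices to exhibit this configuration as a linear combination of trains all of whose cars lie in $\cS_{4,+}$; the remaining hypotheses of the universal jellyfish framework for this generating set are the routine structure‑constant computations.

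The pull‑through would proceed in two passes. I would view the two‑strand configuration as an annular tangle — the outer strand together with its shaded region — applied to the one‑strand configuration for $s$. Proposition \ref{prop:1StrandPlus} rewrites that one‑strand configuration as a linear combination of trains on $\cS_{5,-}$, so after substitution the two‑strand configuration becomes a linear combination of diagrams, each a train $\tau$ on $\cS_{5,-}$ with a single strand wrapping around it. For each such $\tau$ I would isotope the wrapping strand so that it passes the cars of $\tau$ one at a time; each time it passes a car $c\in\cS_{5,-}$, the local picture is (after the rotations recorded in Lemma \ref{lem:1ClickOnS4} and Corollary \ref{cor:RotationInTrains}) the one‑strand jellyfish configuration for $c$, which by Proposition \ref{prop:1StrandMinus5} is a linear combination of trains on $\cS_{4,+}$. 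Once the strand has cleared every car, the Temperley–Lieb pieces coming from $\tau$, from the strand's passage, and from the applications of Proposition \ref{prop:1StrandMinus5} recombine into the Temperley–Lieb part of a single train, leaving the desired linear combination of trains on $\cS_{4,+}$.

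The main obstacle I expect is purely diagrammatic bookkeeping: keeping track of the rotations of the cars that are induced as the wrapping strand is pulled past them, of the shadings, and of the star positions, so that each local move genuinely matches an already‑established one‑strand relation; a subtle point is that after the first pass one is working with cars at depth $5$ rather than depth $4$, and it is Proposition \ref{prop:1StrandMinus5} that brings the problem back down. This is the same kind of argument run at the ends of the proofs of Propositions \ref{prop:1StrandMinus4} and \ref{prop:1StrandMinus5} (and handled there by Corollary \ref{cor:RotationInTrains} and Lemmas \ref{lem:1ClickOnS4} and \ref{lem:Cup0InTrains}), so I would adapt those computations rather than reinvent them, cross‑checking the coefficients in the {\tt Mathematica} notebook. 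Finally, independence of $\lambda$ is immediate: every ingredient invoked — Propositions \ref{prop:1StrandPlus} and \ref{prop:1StrandMinus5}, Corollary \ref{cor:RotationInTrains}, and the one‑ and multi‑click rotation lemmas — holds independently of $\lambda$ by Remark \ref{rem:ProofsOmitted}, so the structure constants appearing in the resulting two‑strand relations do not depend on $\lambda$ either.
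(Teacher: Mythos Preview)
Your proposal is correct and is essentially the paper's own argument: apply Proposition~\ref{prop:1StrandPlus} to pass the first strand, landing in $\spann(\trains_{5,-}(\cS_{5,-}))$, then apply Proposition~\ref{prop:1StrandMinus5} to each car in $\cS_{5,-}$ to pass the second strand, landing in $\spann(\trains_{6,+}(\cS_{4,+}))$; independence of $\lambda$ follows because both propositions are $\lambda$-independent. The only excess in your write-up is the invocation of Lemma~\ref{lem:1ClickOnS4} and Corollary~\ref{cor:RotationInTrains} at the top level---those rotations are already absorbed into the proofs of Propositions~\ref{prop:1StrandPlus} and~\ref{prop:1StrandMinus5}, so you need not re-cite them here.
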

\begin{proof}
We must show that for each $s\in \cS_{4,+}$, we have
$$
\begin{tikzpicture}[baseline = -.1cm]
	\fill[shaded] (-.5,-.6)--(-.5,.6) -- (-.65,.6) -- (-.65,-.6);
	\draw (0,-.6)--(0,.6);
	\draw (-.5,-.6)--(-.5,.6);
	\draw (-.65,-.6)--(-.65,.6);
	\node at (.15,.5) {{\scriptsize{$4$}}};
	\node at (.15,-.5) {{\scriptsize{$4$}}};
	\nbox{unshaded}{(0,0)}{.35}{0}{0}{$s$};
\end{tikzpicture}
\in \spann(\trains_{6,+}(\cS_{4,+})).
$$
First, by Proposition \ref{prop:1StrandPlus}, we have
$$
\begin{tikzpicture}[baseline = -.1cm]
	\draw (0,-.6)--(0,.6);
	\fill[shaded] (-.5,-.6)--(-.5,.6) -- (-.65,.6) -- (-.65,-.6);
	\draw (-.5,-.6)--(-.5,.6);
	\node at (.15,.5) {{\scriptsize{$4$}}};
	\node at (.15,-.5) {{\scriptsize{$4$}}};
	\nbox{unshaded}{(0,0)}{.35}{0}{0}{$s$};
\end{tikzpicture}
\in \spann(\trains_{5,-}(\cS_{5,-})).
$$
For each $s'\in \cS_{5,-}$, by Proposition  \ref{prop:1StrandMinus5}, we have
$$
\begin{tikzpicture}[baseline = -.1cm]
	\fill[shaded] (-.5,-.6)--(-.5,.6) -- (0,.6) -- (0,-.6);
	\draw (0,-.6)--(0,.6);
	\draw (-.5,-.6)--(-.5,.6);
	\node at (.15,.5) {{\scriptsize{$5$}}};
	\node at (.15,-.5) {{\scriptsize{$5$}}};
	\nbox{unshaded}{(0,0)}{.35}{0}{0}{$s'$};
\end{tikzpicture}
\in \spann(\trains_{6,+}(\cS_{4,+})).
$$
The relations from Propositions  \ref{prop:1StrandPlus} and \ref{prop:1StrandMinus5} are independent of $\lambda$.
\end{proof}

\begin{thm}\label{thm:Evaluable}
$\cP_\bullet$ is evaluable, and the value of any closed diagram is independent of the parameter $\lambda\in \bbT$.
\end{thm}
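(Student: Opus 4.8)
The plan is to derive evaluability directly from the Universal Jellyfish Algorithm, Theorem~\ref{thm:EvaluableSubalgebra}, applied to the collection $\{\cS_{j,+}\}_{j\le 4}$ with $n=4$; recall $\cS_{0,+}=\cS_{1,+}=\cS_{2,+}=\emptyset$, $\cS_{3,+}=\{T\}$, and $\cS_{4,+}=\{e_{4,+}^{i,j}\}$. Two preliminaries: since $\cP_\bullet\subset\cG_\bullet$ and the graph planar algebra has closed loops equal to scalar multiples of the empty diagram, so does $\cP_\bullet$; and since $T=T(\lambda)$ is self-adjoint by Lemma~\ref{lem:ParameterFamilies}, $\cP_\bullet=\cP\cA(T)$ is a planar $*$-algebra, so by the remark after Theorem~\ref{thm:EvaluableSubalgebra} the reflected absorption relations in hypothesis~(4) need not be checked separately.

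First I would verify hypotheses (1)--(4). For~(1): the first displayed identity in the proof of Lemma~\ref{lem:BInTrains} writes $T$ as a scalar multiple of a difference of partial traces of $e_{4,+}^{1,1}$ and $e_{4,+}^{2,2}$, so $T$ lies in the planar algebra generated by $\cS_{4,+}$, while conversely $\cS_{4,+}$ is built from $T$ via Definitions~\ref{defn:3boxes} and~\ref{defn:4boxes}; hence $\cS_{4,+}$ generates $\cP_\bullet$. Hypothesis~(2) is exactly Theorem~\ref{thm:2Strand}. For~(3): when $j\le 1$ there is nothing to check, as $\cS_{j+1,+}=\emptyset$; for $j=2$ the generator $T$ is uncappable, so every cap of $T$ is $0\in\cT\cL_{2,+}$; for $j=3$ the elements of $\cS_{4,+}$ are (rectangularly) uncappable, so every non-left cap either vanishes or is the partial trace, and the partial traces are scalar multiples of $e_{3,+}^1=\tfrac12(\jw{3}+T)$, $e_{3,+}^2=\tfrac12(\jw{3}-T)$ (or zero for $e_{4,+}^{1,2}, e_{4,+}^{2,1}$), all lying in $\spann(\cS_{3,+})\oplus\cT\cL_{3,+}$. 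For~(4): for $j\le k\le 4$ the absorption tangle on $\cS_{k,+}\times\cS_{j,+}$ is nontrivial only for the pairs $(\cS_{3,+},\cS_{3,+})$, $(\cS_{4,+},\cS_{3,+})$, and $(\cS_{4,+},\cS_{4,+})$, and in each case the output lands in the explicitly computed $3$- and $4$-box spaces of Sections~\ref{sec:3boxes}--\ref{sec:4boxes}, so the membership in $\spann(\cS_{k,+})\oplus\cT\cL_{k,+}$ is a finite linear-algebra check in $\cG_\bullet$ (performed in {\tt G2D2.nb}); indeed the $e_{n,+}^{i,j}$ were defined precisely so these absorption structure constants close up.

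With (1)--(4) in place, Theorem~\ref{thm:EvaluableSubalgebra} gives $\cP_{k,+}=\trains_{k,+}(\cS_{\le k})$ for all $k\le 4$. Taking $k=0$ and using $\cS_{0,+}=\emptyset$ together with Lemma~\ref{lem:ReducedTrains} (any reduced train in $\cP_{0,+}$ uses generators only from $\cS_{\le 0}=\emptyset$, so has none), every closed diagram in $\cP_\bullet$ reduces to a Temperley--Lieb scalar; thus $\dim\cP_{0,\pm}=1$ and $\cP_\bullet$ is evaluable.

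For the independence statement, I would observe that the jellyfish reduction of a closed diagram invokes only: the expression of $T$, and hence of every generator, in terms of trains on $\cS_{4,+}$; the $2$-strand jellyfish relations of Theorem~\ref{thm:2Strand}; the cap relations of~(3) and the absorption relations of~(4); the multiplication and partial-trace structure constants of the matrix-unit systems $\cS_{j,\pm}$; and Temperley--Lieb evaluation with the modulus of $2D2$. By Remark~\ref{rem:ProofsOmitted}, every lemma after Lemma~\ref{lem:ParameterFamilies}---in particular all of these relations and structure constants---is independent of $\lambda\in\bbT$, and Theorem~\ref{thm:2Strand} states this explicitly. Since the reduction is deterministic once this finite list is fixed, it outputs the same scalar for every $\lambda$, so the value of any closed diagram is $\lambda$-independent. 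The part needing genuine care is hypothesis~(4)---the absorption/annular structure constants are where essentially all the remaining $\cG_\bullet$ computation sits---together with making precise, as above, that the evaluation depends only on these $\lambda$-free relations and not on the particular generator $T(\lambda)$.
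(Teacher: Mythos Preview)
Your proof is correct and follows the same strategy as the paper's: verify the hypotheses of Theorem~\ref{thm:EvaluableSubalgebra} with $n=4$ and the sets $\cS_{j,+}$, then observe that every relation entering the jellyfish reduction is $\lambda$-independent. The one place the paper is slightly more direct is hypothesis~(4): rather than defer to a linear-algebra check in $\cG_\bullet$, it writes the $T$-absorption formula $\delta_{j,1}\,e_{4,+}^{i,1}-\delta_{j,2}\,e_{4,+}^{i,2}$ straight from $T=e_{3,+}^1-e_{3,+}^2$ and the Wenzl-type definition of the $e_{4,+}^{i,j}$ under $e_{3,+}^j$, and simply cites Lemma~\ref{lem:S4Plus} for the $(\cS_{4,+},\cS_{4,+})$ case.
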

\begin{proof}
We use Theorem \ref{thm:EvaluableSubalgebra}, with $n=4$, and the generating set $\cS_{4,+}$.
As at the end of Section \ref{sec:StabilityAndJellyfish}, we let $\cS_{j,+}=\emptyset$ for $j<3$, and for $j=3$, we let $\cS_{3,+}=\{T\}$.
We know that $T$ is uncappable and $T^2=\jw{3}$ by Lemma \ref{lem:ParameterFamilies}.

Now the projections in $\cS_{4,+}$ are defined from the $e_{3,+}^i=(\jw{3}\pm T)/2$ via the generalized Wenzl relation.
Hence we know all annular maps applied to $\cS_{4,+}$, since its elements are rectangularly uncappable, the $e_{4,+}^{1,2},e_{4,+}^{2,1}$ are uncappable, and the partial traces of the $e_{4,+}^{i,i}$ are given in Definition \ref{defn:4boxes}.
Moreover, we also know that the elements of $\cS_{4,+}$ absorb $T$:
$$
\begin{tikzpicture}[baseline = -.1cm, xscale=-1]
	\draw (0,1.3)--(0,.4);
	\draw (0,-1.3)--(0,-.4);		
	\node at (.15,1.2) {{\scriptsize{$3$}}};
	\node at (.15,0) {{\scriptsize{$3$}}};
	\node at (.15,-1.2) {{\scriptsize{$4$}}};
	\filldraw[shaded] (0,1.3) -- (0,-.35) -- (-.25,-.35) -- (-.25,-.25) arc (0:90:.15cm) arc (-90:-180:.25cm) -- (-.65,1.3);
	\nbox{unshaded}{(0,-.6)}{.35}{0}{0}{$e_{4,+}^{i,j}$}
	\nbox{unshaded}{(0,.6)}{.35}{0}{0}{$T$}
\end{tikzpicture}
=
\begin{tikzpicture}[baseline = -.1cm, xscale=-1]
	\draw (0,1.3)--(0,.4);
	\draw (0,-1.3)--(0,-.4);		
	\node at (.15,1.2) {{\scriptsize{$3$}}};
	\node at (.15,0) {{\scriptsize{$3$}}};
	\node at (.15,-1.2) {{\scriptsize{$4$}}};
	\filldraw[shaded] (0,1.3) -- (0,-.35) -- (-.25,-.35) -- (-.25,-.25) arc (0:90:.15cm) arc (-90:-180:.25cm) -- (-.65,1.3);
	\nbox{unshaded}{(0,-.6)}{.35}{0}{0}{$e_{4,+}^{i,j}$}
	\nbox{unshaded}{(0,.6)}{.35}{0}{0}{$e_{3,+}^1$}
\end{tikzpicture}
-
\begin{tikzpicture}[baseline = -.1cm, xscale=-1]
	\draw (0,1.3)--(0,.4);
	\draw (0,-1.3)--(0,-.4);		
	\node at (.15,1.2) {{\scriptsize{$3$}}};
	\node at (.15,0) {{\scriptsize{$3$}}};
	\node at (.15,-1.2) {{\scriptsize{$4$}}};
	\filldraw[shaded] (0,1.3) -- (0,-.35) -- (-.25,-.35) -- (-.25,-.25) arc (0:90:.15cm) arc (-90:-180:.25cm) -- (-.65,1.3);
	\nbox{unshaded}{(0,-.6)}{.35}{0}{0}{$e_{4,+}^{i,j}$}
	\nbox{unshaded}{(0,.6)}{.35}{0}{0}{$e_{3,+}^2$}
\end{tikzpicture}
=
\delta_{j,1}
\begin{tikzpicture}[baseline = -.1cm]
	\draw (0,-.6)--(0,.6);
	\node at (-.15,.5) {{\scriptsize{$4$}}};
	\node at (-.15,-.5) {{\scriptsize{$4$}}};
	\nbox{unshaded}{(0,0)}{.35}{0}{0}{$e_{4,+}^{i,1}$};
\end{tikzpicture}
-
\delta_{j,2}
\begin{tikzpicture}[baseline = -.1cm]
	\draw (0,-.6)--(0,.6);
	\node at (-.15,.5) {{\scriptsize{$4$}}};
	\node at (-.15,-.5) {{\scriptsize{$4$}}};
	\nbox{unshaded}{(0,0)}{.35}{0}{0}{$e_{4,+}^{i,2}$};
\end{tikzpicture}\,.
$$
By Lemma \ref{lem:S4Plus}, $\cS_{4,+}$ is a system of matrix units, so $\cP_\bullet$ is evaluable by Theorem \ref{thm:EvaluableSubalgebra}.

The 2-strand relations from Theorem \ref{thm:2Strand} and the annular and absorption maps are independent of $\lambda\in\bbT$, so the value of any closed diagram is also independent of $\lambda$.
\end{proof}

\subsection{The principal graphs}\label{sec:PrincipalGraphs}

\begin{thm}\label{thm:PrincipalGraphs}
The subfactor planar algebra $\cP_\bullet$ has principal graphs
$$
\left(
\bigraph{bwd1v1v1p1v1x1v1v1duals1v1v1v1},
\bigraph{bwd1v1v1p1v1x0p1x0p0x1p0x1v0x1x1x0duals1v1v1x3x2x4}
\right).
$$
\end{thm}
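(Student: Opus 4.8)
The plan is to read the principal graph $\Gamma_+$ and dual principal graph $\Gamma_-$ of $\cP_\bullet$ directly off the systems of matrix units built in Sections \ref{sec:3boxes}--\ref{sec:6boxes}, and then to kill the (short) tails with Popa's principal graph stability. By Theorem \ref{thm:Evaluable}, $\cP_\bullet$ is a subfactor planar algebra with modulus $\delta=\sqrt{3+\sqrt5}>2$. Since $\cP_\bullet$ is generated by the uncappable element $T\in\cP_{3,+}$ (Lemma \ref{lem:ParameterFamilies}), nothing new is produced below depth $3$: $\cP_{k,\pm}=\cT\cL_{k,\pm}$ for $k\le 2$, so $\cP_\bullet$ is $2$-supertransitive and both graphs begin with the length-$2$ arm $*-\bullet-\bullet$.

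Next I would identify each graph depth by depth. At depth $3$, $\jw{3}=e_{3,+}^1+e_{3,+}^2$ is a sum of two minimal projections of equal trace $\sqrt{7+3\sqrt5}$ (Definition \ref{defn:3boxes}), so $\Gamma_+$ has exactly two depth-$3$ vertices, each joined to the depth-$2$ vertex by a simple edge; applying $\cF$ with $\check T=e_{3,-}^1-e_{3,-}^2$ and $\cF(T)^2=\jw{3}$ gives the same for $\Gamma_-$. At depth $4$, $\cS_{4,+}$ is a single $M_2(\bbC)$ (Lemma \ref{lem:S4Plus}) with $e_{4,+}^{i,i}$ dominated by the image of $e_{3,+}^i$ and $\Tr(e_{4,+}^{i,i})=2+\sqrt5$; together with the annular multiplicity sequence $*12$ (so the new low-weight space at depth $4$ is exactly $\spann\{A,B\}$, Lemma \ref{lem:3ZMod4Generators}) this forces a single new depth-$4$ vertex joined to both depth-$3$ vertices by simple edges, closing the diamond. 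On the $-$ side, $g_1=e_{4,-}^1+e_{4,-}^2$ and $g_2=e_{4,-}^3+e_{4,-}^4$ are dominated by $e_{3,-}^1,e_{3,-}^2$ (Definition \ref{defn:gi}) and the traces in Lemma \ref{lem:S4Minus} show each $g_i$ splits into exactly two orthogonal minimal projections, giving four depth-$4$ vertices of $\Gamma_-$ with self-duality permutation $(1)(23)(4)$ read off from $\cF^4(e_{4,-}^1)=e_{4,-}^1$, $\cF^4(e_{4,-}^2)=e_{4,-}^3$, $\cF^4(e_{4,-}^4)=e_{4,-}^4$. Finally $\cS_{5,-}$ is a single $M_2(\bbC)$ (Definition \ref{defn:S5Minus}) lying under $e_{4,-}^2,e_{4,-}^3$, giving one depth-$5$ vertex of $\Gamma_-$, while $\cS_{5,+}$ and $\cS_{6,+}$ are single copies of $M_2(\bbC)$ (Definitions \ref{defn:5boxes}, \ref{defn:6boxes}) with $\Tr(e_{5,+}^{i,i})=\sqrt{3+\sqrt5}$ and $\Tr(e_{6,+}^{i,i})=1$, extending the $+$ arm by two vertices. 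At each depth one must also check that the listed projections exhaust the new minimal central summands --- a direct computation in $\cG_\bullet$ in the spirit of Remark \ref{rem:ProofsOmitted}, e.g.\ comparing $\dim\cP_{k,\pm}$ with the value predicted by the candidate graph (and using $\dim\cP_{6,-}=\dim\cP_{6,+}$).

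To finish, note that the structure above makes $\Gamma_+$ stable at depths $4$ and $5$; since $\delta>2$, the main theorem of \cite{MR3157990}, building on Popa's stability \cite{MR1334479}, shows $\Gamma_\pm$ is stable at all larger depths and finite. The trace data $\Tr(e_{5,+}^{i,i})=\delta$ and $\Tr(e_{6,+}^{i,i})=1$, via the Perron--Frobenius relation $\delta\,\dim(v)=\sum_{w\text{ adjacent to }v}\dim(w)$, then force the depth-$6$ vertex to be a leaf, so $\Gamma_+$ is exactly the stated graph; the parallel argument using the duality data and $\cS_{5,-}$ terminates $\Gamma_-$ at depth $5$. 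As a sanity check one confirms the Perron--Frobenius eigenvalue of each candidate graph is $\sqrt{3+\sqrt5}$ and that the duals match (which, with Theorems \ref{thm:Evaluable} and \ref{thm:2Strand} and Lemma \ref{lem:ParameterFamilies}, also yields Theorems \ref{thm:2D2}, \ref{thm:Unique} and Corollary \ref{cor:Equi}). I expect the main obstacle to be the bookkeeping of the second paragraph: showing that at each depth the explicitly constructed projections account for \emph{all} the new vertices, so that the graphs are no larger than claimed --- this is where the annular-multiplicity input and the dimension counts do the real work, whereas the stabilization step is essentially a black-box application of \cite{MR3157990}.
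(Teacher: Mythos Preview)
Your depth-by-depth reading of the matrix-unit systems is correct and gives the needed \emph{lower} bounds on $\dim\cP_{k,\pm}$, together with the claimed adjacencies and dual data; this is also how the paper argues. The genuine gap is the \emph{upper} bound. You propose to check that the listed projections exhaust the new summands by ``a direct computation in $\cG_\bullet$, e.g.\ comparing $\dim\cP_{k,\pm}$ with the value predicted by the candidate graph.'' But $\cP_\bullet$ is \emph{defined} as the planar subalgebra of $\cG_\bullet$ generated by $T$; there is no way to read off $\dim\cP_{k,\pm}$ from $\cG_\bullet$ without first exhibiting a spanning set for $\cP_{k,\pm}$, and producing one is precisely the nontrivial content of the jellyfish machinery (the crude bound $\dim\cP_{k,\pm}\le\dim\cG_{k,\pm}$ is far too weak). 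The same problem already appears in your $2$-supertransitivity step: ``generated by an uncappable $3$-box'' does not by itself force $\cP_{2,\pm}=\cT\cL_{2,\pm}$, since diagrams with several $T$'s joined to one another can land in $\cP_{2,\pm}$ and must be reduced using the relations. Your stability argument then becomes circular, because asserting that $\Gamma_+$ is stable at depths $4$ and $5$ presupposes you have already pinned down $\Gamma_+$ there.

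The paper closes this gap in the first sentence of its proof: by Theorem~\ref{thm:2Strand}, $\cS_{4,+}$ has $2$-strand jellyfish relations, so \emph{trains on $\cS_{4,+}$ span $\cP_{n,+}$ for every $n$} (this is \cite[Lemma~3.7]{MR3157990}, as used in Theorem~\ref{thm:EvaluableSubalgebra}). Reducing these trains with the cap and absorption relations of Definitions~\ref{defn:3boxes}--\ref{defn:4boxes} and Lemma~\ref{lem:ReducedTrains} shows that in $\cP_{3,+}$ the reduced trains are exactly $\cT\cL_{3,+}\oplus\bbC T$, giving $2$-supertransitivity with excess $1$; at higher depths the reduced trains are exactly the matrix-unit bases you described, so the upper and lower bounds match. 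Once trains span, your separate appeal to Popa stability is redundant (though not wrong), since stability and trains-spanning are equivalent by \cite{MR3157990}. In short, replace ``direct computation of $\dim\cP_{k,\pm}$ in $\cG_\bullet$'' by ``trains on $\cS_{4,+}$ span, by Theorem~\ref{thm:2Strand}'', and your outline becomes the paper's proof.
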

\begin{proof}
By Theorem \ref{thm:2Strand}, trains on $\cS_{4,+}$ span $\cP_{n,+}$ for all $n\geq 0$.
Since the elements of $\cS_{4,+}$ are rectangularly uncappable, and we know their partial traces by Definition \ref{defn:4boxes}, any reduced train on $\cS_{4,+}$ in $\cP_{3,+}$ is a reduced train on $T$.
Since $T$ is uncappable and $T^2=\jw{3}$, the reduced trains on $T$ in $\cP_{3,+}$ span $\cT\cL_{3,+}\oplus \bbC T$.
Hence $\cP_\bullet$ is 2-supertransitive with excess 1.

The principal graph is 2D2 by Lemma \ref{lem:S4Plus} and Definitions \ref{defn:5boxes} and \ref{defn:6boxes}.
The dual graph is as claimed by Lemma \ref{lem:S4Minus} and Definition \ref{defn:S5Minus}.
\end{proof}

We now prove our main Theorems \ref{thm:2D2} and \ref{thm:Unique}.

\begin{proof}[Proof of Theorems \ref{thm:2D2} and \ref{thm:Unique}]
Any subfactor planar algebra with principal graph 2D2 must embed in its graph planar algebra by \cite{MR2812459,gpa}.
By Lemma \ref{lem:ParameterFamilies}, there are exactly two 1-parameter family of solutions of the equation $T^2=\jw{3}$, which must hold when the principal graph is 2D2.
By Theorem \ref{thm:Evaluable}, the value of any closed diagram is independent of the parameter, so the generator always generates the same subfactor planar algebra, which has principal graphs as claimed in Theorem \ref{thm:2D2} by Theorem \ref{thm:PrincipalGraphs}.
Hence no subfactors with the other principal graphs exist.
\end{proof}

\begin{proof}[Proof of Corollary \ref{cor:Equi}]
The map $T\mapsto -T$ extends to a planar algebra automorphism of $\cP_\bullet$, which performs the following swaps: $e_{3,\pm}^1\leftrightarrow e_{3,\pm}^2$, $e^{1,1}_{4,+}\leftrightarrow e_{2,2}$, $e^{1,2}_{4,+}\leftrightarrow e^{2,1}_{4,+}$, $e_{4,-}^1\leftrightarrow e_{4,-}^4$, and $e_{4,-}^2\leftrightarrow e_{4,-}^3$, and similar appropriate swaps for $\cS_{5,\pm}$ and $\cS_{6,+}$.
However, $A$ and $B$ are fixed, since they are given by linear combinations of diagrams with even numbers of $T$'s in Definition \ref{defn:4boxes}.
Thus the map preserves the 2-strand jellyfish relations, since swapping the $e_{i,j}^{4,+}$ results in the same relations in the various lemmas, after using the map $T\mapsto -T$.
For example, swapping the appropriate rows in the matrixes in Figures \ref{fig:1ClickRotationMatrices} and \ref{fig:Cup0InTrains} and replacing $T$ with $-T$ gives the exact same matrices (see Remark \ref{rem:SameRelations}). 

Thus the fixed points of this automorphism form a 3-supertransitive subfactor planar algebra which contains Izumi's $3^{\bbZ/4\bbZ}$ subfactor planar algebra by Lemma \ref{lem:3ZMod4}.
Hence the fixed points are exactly this subfactor planar algebra.
\end{proof}

\section{$3^{\bbZ/2\bbZ\times \bbZ/2\bbZ}$ and 4442}\label{sec:4442}

We conclude with a brief section on the $3^{\bbZ/2\bbZ\times \bbZ/2\bbZ}$  and 4442 subfactor planar algebras, which are also related by equivariantization.

The $3^{\bbZ/2\bbZ\times \bbZ/2\bbZ}$ subfactor planar algebra is generated as a planar algebra by the 3 minimal projections at depth 4.
There is a $\bbZ/3\bbZ$-action corresponding to cyclicly permuting these projections, and the fixed points of this action forms the 4442 subfactor planar algebra constructed in \cite{1208.3637}.
As this result is due to Izumi \cite{IzumiUnpublished}, we simply write the formulas for the uncappable rotational eigenvectors for 4442 in terms of the minimal projections $P,Q,R$ of  $3^{\bbZ/2\bbZ\times \bbZ/2\bbZ}$ at depth 4 given in the beginning of the proof of \cite[Theorem 5.9]{1208.3637} listed from top to bottom respectively.
Define the following numbers:
\begin{align*}
\alpha &= -1-\frac{\sqrt{5}}{3} 
&
\beta &= \frac{1}{6} \left(3+\sqrt{5}\right)+i \sqrt{\frac{1}{10} \left(5+\sqrt{5}\right)}.
\end{align*}
Then an uncappable rotational eigenvector with eigenvalue $\exp\left(2\pi i\left(\frac{3}{5}\right)\right)$ at depth 5 of $3^{\bbZ/2\bbZ\times \bbZ/2\bbZ}$ is given by
$$
\left(\sqrt{\frac{3}{5}+\frac{1}{\sqrt{5}}}\right)\jw{5}
+\alpha(P\circ P + Q\circ Q + R\circ R)
+\beta(P\circ Q + Q\circ R + R\circ P)
+\overline{\beta}(P\circ R + Q\circ P + R\circ Q),
$$
and an another uncappable rotational eigenvector with eigenvalue $\exp\left(2\pi i\left(\frac{2}{5}\right)\right)$ is given by the same formula, but swapping $\beta$ and $\overline{\beta}$ (resulting in the conjugate eigenvalue).
Note that these elements are obviously fixed under cyclic permutations of $P,Q,R$.
These two elements generate the 4442 subfactor planar algebra, but they are non-trivial scalar multiples of the generators for 4442 given in \cite{1208.3637}.
They are also complex-conjugate to each other (entry-wise, as elements of the graph planar algebra).

Uniqueness of the $3^{\bbZ/2\bbZ\times \bbZ/2\bbZ}$ subfactor planar algebra follows from the uniqueness of Izumi's Cuntz-algebra model.
We should thus expect uniqueness of the 4442 subfactor planar algebra, but the Cuntz algebra model no longer works, as the even part of 4442 has more than 2 orbits under the $\bbZ/3\bbZ$-action.
However, there is a unique bi-unitary connection up to graph automorphism.

We now prove Theorem \ref{thm:4442}, i.e., that there is exactly one subfactor planar algebra with principal graphs 4442.
The proof of this theorem is similar to the proof of uniqueness for 3311 from \cite{MR2993924}, and we give only the most important details below.
This proof was also known to Izumi.

\begin{proof}[Proof of Theorem \ref{thm:4442}]
The diagrammatic branch matrix for 4442 with its 9 unknown phases is given by
$$
\frac{1}{A}
\left(
\begin{array}{cccc}
 -1& B & B & \sqrt{BC} \\
B & B \xi_1 & \xi_2 & \sqrt{BC} \eta_1 \\
B & \xi_3 & B \xi_4 & \sqrt{BC} \eta_2 \\
 \sqrt{BC} & \sqrt{BC} \eta_3 & \sqrt{BC} \eta_4 & 3 \zeta
\end{array}
\right)
$$
where $A=[5]$, $B=[4]$, $C=3[2]$, and $[2]=\sqrt{3+\sqrt{5}}$.
By a computation similar to \cite[Lemmas 3.1 and 3.4]{MR2993924}, but more difficult, there are exactly two solutions for these 9 unknown phases, which are complex conjugate to each other (so we list only one below):
\begin{align*}
\xi_1 &= \sqrt{-\frac{11}{16}-\frac{3 i \sqrt{15}}{16}}  \approx  0.395285-0.918559 i 
\\
\xi_2 &=\xi_3=-\zeta= -1
\\
\xi_4 &=  \sqrt{-\frac{11}{16}+\frac{3 i \sqrt{15}}{16}} \approx 0.395285+0.918559 i
\\
\eta_1 &=\eta_3= -\frac{1}{2} \sqrt{\frac{1}{2} \left(-1-3 \sqrt{5}-i\sqrt{6 \left(3-\sqrt{5}\right)}\right)} \approx -0.135045+0.990839 i
\\
\eta_2 &=\eta_4=  -\frac{1}{2} \sqrt{\frac{1}{2} \left(-1-3 \sqrt{5}+i \sqrt{6 \left(3-\sqrt{5}\right)}\right)} \approx -0.135045-0.990839 i
\end{align*}
Since $\overline{\xi_1}=\xi_4$ and $\overline{\eta_1}=\overline{\eta_3}=\eta_2=\eta_4$, the resulting diagrammatic branch matrices are mapped to each other by swapping the second and third rows and columns, which corresponds to a graph automorphism of 4442.
Since every bi-unitary connection on 4442 is gauge equivalent to one of these, there is exactly one gauge orbit of bi-unitary connections on 4442 up to graph automorphism, and thus there is at most one 4442 subfactor planar algebra.
Existence was shown in \cite{1208.3637}.
\end{proof}

\begin{remark}
Even though computing the phases is more difficult for 4442 than for 3311, we only get a discrete set of bi-unitary connections, not a one parameter family, which gives us uniqueness without using the $U^TU$ condition of \cite[Theorem 1.7]{MR2993924}.
\end{remark}


\begin{landscape}
\appendix
\section{One click rotations $\cF:\spann(\cS_{4,\pm})\to \spann(\cB_{4,\mp})$, and $\trains_{4,+}(\cS_{4,+})=\cP_{4,+}$}\label{sec:1ClickRotation}

\thispagestyle{empty}

\begin{figure}[!ht]
{\scriptsize{
\begin{align*}
\cF
\begin{pmatrix}
e_{4,+}^{1,1}\\
e_{4,+}^{1,2}\\
e_{4,+}^{2,1}\\
e_{4,+}^{2,2}
\end{pmatrix}
&=
\left(
\begin{array}{ccccccccccc}
 0 & -\frac{1}{6} & \frac{1}{3} & -\frac{1}{8} \sqrt{7-3 \sqrt{5}} & 0 & -\frac{1}{8} \sqrt{7-3 \sqrt{5}} & \frac{1}{8} \left(\sqrt{5}-1\right) & -\frac{1}{8} \sqrt{3+\sqrt{5}} & \frac{1}{2} & -\frac{1}{8}
   \sqrt{3+\sqrt{5}} & \frac{1}{8} \left(\sqrt{5}-1\right) \\
 \frac{i}{2} & 0 & 0 & -\frac{1}{8} i \sqrt{5 \left(\sqrt{5}-1\right)} & \frac{1}{4} i \sqrt{2+\sqrt{5}} & -\frac{1}{8} i \sqrt{5 \left(\sqrt{5}-1\right)} & \frac{1}{4} i \sqrt{\frac{1}{2}\left(\sqrt{5}-1\right)} &
   -\frac{1}{8} i \sqrt{\sqrt{5}-1} & \frac{1}{4} i \sqrt{\sqrt{5}-2} & -\frac{1}{8} i \sqrt{\sqrt{5}-1} & \frac{1}{4} i \sqrt{\frac{1}{2}\left(\sqrt{5}-1\right)} \\
 \frac{i}{2} & 0 & 0 & \frac{1}{8} i \sqrt{5 \left(\sqrt{5}-1\right)} & -\frac{1}{4} i \sqrt{2+\sqrt{5}} & \frac{1}{8} i \sqrt{5 \left(\sqrt{5}-1\right)} & -\frac{1}{4} i \sqrt{\frac{1}{2}
   \left(\sqrt{5}-1\right)} & \frac{1}{8} i \sqrt{\sqrt{5}-1} & -\frac{1}{4} i \sqrt{\sqrt{5}-2} & \frac{1}{8} i \sqrt{\sqrt{5}-1} & -\frac{1}{4} i \sqrt{\frac{1}{2} \left(\sqrt{5}-1\right)} \\
 0 & -\frac{1}{6} & \frac{1}{3} & \frac{1}{8} \sqrt{7-3 \sqrt{5}} & 0 & \frac{1}{8} \sqrt{7-3 \sqrt{5}} & \frac{1}{8} \left(1-\sqrt{5}\right) & \frac{1}{8}\sqrt{3+\sqrt{5}} & -\frac{1}{2} &
   \frac{1}{8}\sqrt{3+\sqrt{5}} & \frac{1}{8} \left(1-\sqrt{5}\right)
\end{array}
\right)
\cB_{4,-}
\\
\cF
\begin{pmatrix}
e_{4,-}^{1}\\
e_{4,-}^{2}\\
e_{4,-}^{3}\\
e_{4,-}^{4}
\end{pmatrix}
&=
\left(
\begin{array}{ccccccccccc}
 \frac{1}{2} i \sqrt{\frac{1}{2} \left(\sqrt{5}-2\right)} & \frac{1}{6 \sqrt{2}} & \frac{1}{6} \left(3-\sqrt{5}\right) & -\frac{1}{4 \sqrt{2}} & \frac{1}{4} & -\frac{1}{4 \sqrt{2}} & \frac{1}{8}
   \left(\sqrt{5}-1\right) & -\frac{1}{4 \sqrt{2}} & \frac{1}{4} & -\frac{1}{4 \sqrt{2}} & \frac{1}{8} \left(\sqrt{5}-1\right) \\
 \frac{1}{4} i \sqrt{5 \sqrt{5}-11} & \frac{\sqrt{5}-5}{12 \sqrt{2}} & \frac{1}{6} \left(\sqrt{5}-1\right) & \frac{1}{8}\sqrt{3-\sqrt{5}} & -\frac{1}{4} & \frac{1}{8}\sqrt{3-\sqrt{5}} & 0 & -\frac{1}{8}
   \sqrt{3-\sqrt{5}} & \frac{1}{4} & -\frac{1}{8} \sqrt{3-\sqrt{5}} & 0 \\
 \frac{1}{4} i \sqrt{5 \sqrt{5}-11} & \frac{\sqrt{5}-5}{12 \sqrt{2}} & \frac{1}{6} \left(\sqrt{5}-1\right) & -\frac{1}{8} \sqrt{3-\sqrt{5}} & \frac{1}{4} & -\frac{1}{8} \sqrt{3-\sqrt{5}} & 0 &
   \frac{1}{8}\sqrt{3-\sqrt{5}} & -\frac{1}{4} & \frac{}{8}\sqrt{3-\sqrt{5}} & 0 \\
 \frac{1}{2} i \sqrt{\frac{1}{2} \left(\sqrt{5}-2\right)} & \frac{1}{6 \sqrt{2}} & \frac{1}{6} \left(3-\sqrt{5}\right) & \frac{1}{4 \sqrt{2}} & -\frac{1}{4} & \frac{1}{4 \sqrt{2}} & \frac{1}{8}
   \left(1-\sqrt{5}\right) & \frac{1}{4 \sqrt{2}} & -\frac{1}{4} & \frac{1}{4 \sqrt{2}} & \frac{1}{8} \left(1-\sqrt{5}\right)
\end{array}
\right)
\cB_{4,+}
\end{align*}
}}
\caption{The one click rotation $\cF$ on $\cS_{4,\pm}$, from Lemma \ref{lem:1ClickOnS4}}
\label{fig:1ClickRotationMatrices}
\end{figure}

\begin{figure}[!ht]
\begin{align*}
\cup_0(T)
&=
\left(
\begin{array}{ccccccccccc}
2 \sqrt{5}&
2 i \sqrt{\sqrt{5}-2}&
-2 i \sqrt{\sqrt{5}-2}&
-2 \sqrt{5}&
0&
-1&
\sqrt{3+\sqrt{5}}&
-2-\sqrt{5}&
\sqrt{3+\sqrt{5}}&
-1&
0
\end{array}
\right)
\left(\cS_{4,+}\cup \set{\cup_i(T)}{1\leq i\leq 7}\right)
\\
\cup_0(\check{T})
&=
\left(
\begin{array}{ccccccccccc}
1+\sqrt{5}&
3+\sqrt{5}&
-3-\sqrt{5}&
-1-\sqrt{5}&
0&
-1&
\sqrt{3+\sqrt{5}}&
-2-\sqrt{5}&
\sqrt{3+\sqrt{5}}&
-1&
0
\end{array}
\right)
\left(\cS_{4,-}\cup \set{\cup_i(\check{T})}{1\leq i\leq 7}\right)
\end{align*}
\caption{$\cup_0(T)$ and $\cup_0(\check{T})$ are in the span of trains on $\cS_{4,\pm}$, from Lemma \ref{lem:Cup0InTrains}}
\label{fig:Cup0InTrains}
\end{figure}

\begin{remark}\label{rem:SameRelations}
The map $T\mapsto -T$ swaps $e_{4,+}^{1,1}\leftrightarrow e_{4,+}^{2,2}$, $e_{4,+}^{1,2}\leftrightarrow e_{4,+}^{2,1}$, $e_{4,-}^1\leftrightarrow e_{4,-}^4$, and $e_{4,-}^2\leftrightarrow e_{4,-}^3$.
This map preserves the equations in Figures \ref{fig:1ClickRotationMatrices} and \ref{fig:Cup0InTrains}.
For Figure \ref{fig:1ClickRotationMatrices}, it corresponds to reversing the order of the 4 rows on both sides, and then negating the last 7 entries of each row corresponding to the $\cup_i(T),\cup_i(\check{T})$ for $1\leq i\leq 8$.
For Figure \ref{fig:Cup0InTrains}, it corresponds to reversing the first 4 columns of the right hand side and fixing the final 7 entries, since all of $\cup_0(T),\cup_0(\check{T})$ and the rest of the $\cup_i(T),\cup_i(\check{T})$ for $1\leq i\leq 8$ are negated.
\end{remark}

\end{landscape}

\bibliographystyle{alpha}
{\footnotesize{
\bibliography{../../bibliography/bibliography}

\newcommand{\noopsort}[1]{}\def\cprime{$'$} \def\cprime{$'$} \def\cprime{$'$}
\begin{thebibliography}{GdlHJ89}

\bibitem[AH99]{MR1686551}
Marta Asaeda and Uffe Haagerup.
\newblock Exotic subfactors of finite depth with {J}ones indices
  {$(5+\sqrt{13})/2$} and {$(5+\sqrt{17})/2$}.
\newblock {\em Comm. Math. Phys.}, 202(1):1--63, 1999.
\newblock \arxiv{math.OA/9803044} \mathscinet{MR1686551}
  \doi{10.1007/s002200050574}.

\bibitem[AY09]{MR2472028}
Marta Asaeda and Seidai Yasuda.
\newblock On {H}aagerup's list of potential principal graphs of subfactors.
\newblock {\em Comm. Math. Phys.}, 286(3):1141--1157, 2009.
\newblock \arxiv{0711.4144} \mathscinet{MR2472028}
  \doi{10.1007/s00220-008-0588-0}.

\bibitem[Big10]{MR2577673}
Stephen Bigelow.
\newblock Skein theory for the {$ADE$} planar algebras.
\newblock {\em J. Pure Appl. Algebra}, 214(5):658--666, 2010.
\newblock \arxiv{math.QA/0903.0144} \mathscinet{MR2577673}
  \doi{10.1016/j.jpaa.2009.07.010}.

\bibitem[Bis98]{MR1625762}
Dietmar Bisch.
\newblock Principal graphs of subfactors with small {J}ones index.
\newblock {\em Math. Ann.}, 311(2):223--231, 1998.
\newblock \mathscinet{MR1625762} \doi{10.1007/s002080050185}.

\bibitem[BJ97]{MR1437496}
Dietmar Bisch and Vaughan F.~R. Jones.
\newblock Algebras associated to intermediate subfactors.
\newblock {\em Invent. Math.}, 128(1):89--157, 1997.
\newblock \mathscinet{MR1437496} \doi{10.1007/s002220050137}.

\bibitem[BMPS12]{MR2979509}
Stephen Bigelow, Scott Morrison, Emily Peters, and Noah Snyder.
\newblock Constructing the extended {H}aagerup planar algebra.
\newblock {\em Acta Math.}, 209(1):29--82, 2012.
\newblock \mathscinet{MR2979509} \arxiv{0909.4099}
  \doi{10.1007/s11511-012-0081-7}.

\bibitem[BN91]{MR1193933}
Jocelyne Bion-Nadal.
\newblock An example of a subfactor of the hyperfinite {${\rm II}\sb 1$} factor
  whose principal graph invariant is the {C}oxeter graph {$E\sb 6$}.
\newblock In {\em Current topics in operator algebras ({N}ara, 1990)}, pages
  104--113. World Sci. Publ., River Edge, NJ, 1991.
\newblock \mathscinet{MR1193933}.

\bibitem[BP14]{MR3157990}
Stephen Bigelow and David Penneys.
\newblock Principal graph stability and the jellyfish algorithm.
\newblock {\em Math. Ann.}, 358(1-2):1--24, 2014.
\newblock \mathscinet{MR3157990} \doi{10.1007/s00208-013-0941-2}
  \arxiv{1208.1564}.

\bibitem[EK98]{MR1642584}
David~E. Evans and Yasuyuki Kawahigashi.
\newblock {\em Quantum symmetries on operator algebras}.
\newblock Oxford Mathematical Monographs. The Clarendon Press Oxford University
  Press, New York, 1998.
\newblock \mathscinet{MR1642584}.

\bibitem[GdlHJ89]{MR999799}
Frederick~M. Goodman, Pierre de~la Harpe, and Vaughan F.~R. Jones.
\newblock {\em Coxeter graphs and towers of algebras}, volume~14 of {\em
  Mathematical Sciences Research Institute Publications}.
\newblock Springer-Verlag, New York, 1989.
\newblock \mathscinet{MR999799}.

\bibitem[GS12]{1202.4396}
Pinhas Grossman and Noah Snyder.
\newblock The {B}rauer-{P}icard group of the {A}saeda-{H}aagerup fusion
  categories, 2012.
\newblock \arxiv{1202.4396}.

\bibitem[Haa94]{MR1317352}
Uffe Haagerup.
\newblock Principal graphs of subfactors in the index range
  {$4<[M:N]<3+\sqrt2$}.
\newblock In {\em Subfactors ({K}yuzeso, 1993)}, pages 1--38. World Sci. Publ.,
  River Edge, NJ, 1994.
\newblock \mathscinet{MR1317352}.

\bibitem[IJMS12]{MR2993924}
Masaki Izumi, Vaughan F.~R. Jones, Scott Morrison, and Noah Snyder.
\newblock Subfactors of index less than 5, {P}art 3: {Q}uadruple points.
\newblock {\em Comm. Math. Phys.}, 316(2):531--554, 2012.
\newblock \arxiv{1109.3190} \mathscinet{MR2993924}
  \doi{10.1007/s00220-012-1472-5}.

\bibitem[IK93]{MR1213139}
Masaki Izumi and Yasuyuki Kawahigashi.
\newblock Classification of subfactors with the principal graph {$D\sp {(1)}\sb
  n$}.
\newblock {\em J. Funct. Anal.}, 112(2):257--286, 1993.
\newblock \mathscinet{MR1213139} \doi{10.1006/jfan.1993.1033}.

\bibitem[IMP13]{1308.5723}
Masaki Izumi, Scott Morrison, and David Penneys.
\newblock Fusion categories between $\mathcal{C} \boxtimes \mathcal{D}$ and
  $\mathcal{C} *\mathcal{D}$, 2013.
\newblock \arxiv{1308.5723}.

\bibitem[Izu]{IzumiUnpublished}
Masaki Izumi.
\newblock Notes on $3^{2n}$ subfactors.

\bibitem[JMS14]{MR3166042}
Vaughan F.~R. Jones, Scott Morrison, and Noah Snyder.
\newblock The classification of subfactors of index at most 5.
\newblock {\em Bull. Amer. Math. Soc. (N.S.)}, 51(2):277--327, 2014.
\newblock \mathscinet{MR3166042} \arxiv{1304.6141}
  \doi{10.1090/S0273-0979-2013-01442-3}.

\bibitem[Jon83]{MR0696688}
Vaughan F.~R. Jones.
\newblock Index for subfactors.
\newblock {\em Invent. Math.}, 72(1):1--25, 1983.
\newblock \mathscinet{MR696688} \doi{10.1007/BF01389127}.

\bibitem[Jon00]{MR1865703}
Vaughan F.~R. Jones.
\newblock The planar algebra of a bipartite graph.
\newblock In {\em Knots in {H}ellas '98 ({D}elphi)}, volume~24 of {\em Ser.
  Knots Everything}, pages 94--117. World Sci. Publ., River Edge, NJ, 2000.
\newblock \mathscinet{MR1865703} \googlebooks{ao7VOxpJQJQC}.

\bibitem[Jon01]{MR1929335}
Vaughan F.~R. Jones.
\newblock The annular structure of subfactors.
\newblock In {\em Essays on geometry and related topics, {V}ol. 1, 2},
  volume~38 of {\em Monogr. Enseign. Math.}, pages 401--463. Enseignement
  Math., Geneva, 2001.
\newblock \mathscinet{MR1929335}.

\bibitem[Jon12]{MR2972458}
Vaughan F.~R. Jones.
\newblock Quadratic tangles in planar algebras.
\newblock {\em Duke Math. J.}, 161(12):2257--2295, 2012.
\newblock \mathscinet{MR2972458} \arxiv{1007.1158}
  \doi{10.1215/00127094-1723608}.

\bibitem[JP11]{MR2812459}
Vaughan F.~R. Jones and David Penneys.
\newblock The embedding theorem for finite depth subfactor planar algebras.
\newblock {\em Quantum Topol.}, 2(3):301--337, 2011.
\newblock \arxiv{1007.3173} \mathscinet{MR2812459} \doi{10.4171/QT/23}.

\bibitem[Liu13a]{1308.5691}
Zhengwei Liu.
\newblock Composed inclusions of ${A}_3$ and ${A}_4$ subfactors, 2013.
\newblock \arxiv{1308.5691}.

\bibitem[Liu13b]{1308.5656}
Zhengwei Liu.
\newblock Planar algebras of small thickness, 2013.
\newblock \arxiv{1308.5656}.

\bibitem[LMP13]{1310.8566}
Zhengwei Liu, Scott Morrison, and David Penneys.
\newblock $1$-supertransitive subfactors with index at most $6\frac{1}{5}$,
  2013.
\newblock \arxiv{1310.8566}, to appear Comm. Math. Phys.

\bibitem[MP12a]{1208.3637}
Scott Morrison and David Penneys.
\newblock Constructing spoke subfactors using the jellyfish algorithm, 2012.
\newblock \arxiv{1208.3637}, to appear in Transactions of the American
  Mathematical Society.

\bibitem[MP12b]{1205.2742}
Scott Morrison and Emily Peters.
\newblock The little desert? {S}ome subfactors with index in the interval
  $(5,3+\sqrt{5})$, 2012.
\newblock \arxiv{1205.2742}.

\bibitem[MPPS12]{MR2902285}
Scott Morrison, David Penneys, Emily Peters, and Noah Snyder.
\newblock Subfactors of index less than 5, {P}art 2: {T}riple points.
\newblock {\em Internat. J. Math.}, 23(3):1250016, 33, 2012.
\newblock \mathscinet{MR2902285} \arxiv{1007.2240}
  \doi{10.1142/S0129167X11007586}.

\bibitem[MS12a]{MR2922607}
Scott Morrison and Noah Snyder.
\newblock Non-cyclotomic fusion categories.
\newblock {\em Trans. Amer. Math. Soc.}, 364(9):4713--4733, 2012.
\newblock \arxiv{1002.0168} \mathscinet{MR2922607}
  \doi{10.1090/S0002-9947-2012-05498-5}.

\bibitem[MS12b]{MR2914056}
Scott Morrison and Noah Snyder.
\newblock Subfactors of index less than 5, part 1: the principal graph
  odometer.
\newblock {\em Communications in Mathematical Physics}, 312(1):1--35, 2012.
\newblock \arxiv{1007.1730} \mathscinet{MR2914056}
  \doi{10.1007/s00220-012-1426-y}.

\bibitem[MW]{gpa}
Scott Morrison and Kevin Walker.
\newblock The graph planar algebra embedding theorem.
\newblock preprint available at \url{http://tqft.net/gpa}.

\bibitem[Ocn88]{MR996454}
Adrian Ocneanu.
\newblock Quantized groups, string algebras and {G}alois theory for algebras.
\newblock In {\em Operator algebras and applications, Vol.\ 2}, volume 136 of
  {\em London Math. Soc. Lecture Note Ser.}, pages 119--172. Cambridge Univ.
  Press, Cambridge, 1988.
\newblock \mathscinet{MR996454}.

\bibitem[Pen13]{1307.5890}
David Penneys.
\newblock Chirality and principal graph obstructions, 2013.
\newblock \arxiv{1307.5890}.

\bibitem[Pop94]{MR1278111}
Sorin Popa.
\newblock Classification of amenable subfactors of type {II}.
\newblock {\em Acta Math.}, 172(2):163--255, 1994.
\newblock \mathscinet{MR1278111} \doi{10.1007/BF02392646}.

\bibitem[Pop95]{MR1334479}
Sorin Popa.
\newblock An axiomatization of the lattice of higher relative commutants of a
  subfactor.
\newblock {\em Invent. Math.}, 120(3):427--445, 1995.
\newblock \mathscinet{MR1334479} \doi{10.1007/BF01241137}.

\bibitem[PP13]{1308.5197}
David Penneys and Emily Peters.
\newblock Calculating two-strand jellyfish relations, 2013.
\newblock \arxiv{1308.5197}.

\bibitem[PT12]{MR2902286}
David Penneys and James Tener.
\newblock Classification of subfactors of index less than 5, part 4: vines.
\newblock {\em International Journal of Mathematics}, 23(3):1250017 (18 pages),
  2012.
\newblock \arxiv{1010.3797} \mathscinet{MR2902286}
  \doi{10.1142/S0129167X11007641}.

\bibitem[Wen88]{MR936086}
Hans Wenzl.
\newblock Hecke algebras of type {$A_n$} and subfactors.
\newblock {\em Invent. Math.}, 92(2):349--383, 1988.

\end{thebibliography}
}

\end{document}